\newcommand\blfootnote[1]{%
  \begingroup
  \renewcommand\thefootnote{}\footnote{#1}%
  \addtocounter{footnote}{-1}%
  \endgroup
}
\theoremstyle{definition}
\newtheorem{definition}{Definition}[section]
\newtheorem{example}[definition]{Example}
\newtheorem{question}[definition]{Question}
\newtheorem{notation}[definition]{Notation}
\newtheorem{claim}[definition]{Claim}
\newtheorem{remark}[definition]{Remark}
\newcommand{\mo}{\triangleleft}
\newcommand{\fr}{{}^\frown}
\newcommand{\name}{\dot}
\newcommand{\la}{\langle}
\newcommand{\ra}{\rangle}
\newcommand{\elem}{\prec}
\newcommand{\uhr}{\restriction}
\newcommand{\ol}{\ol}
\newcommand{\po}{\mathbb{P}}
\renewcommand{\ol}{\bar}
\newcommand{\MS}{\mathcal{MS}}
\newcommand{\mc}[2]{mc_{#1}(#2)}
\DeclareMathOperator{\dom}{dom}
\DeclareMathOperator{\suc}{succ}
\DeclareMathOperator{\add}{Add}
\DeclareMathOperator{\Ord}{Ord}
\DeclareMathOperator{\cp}{cp}
\DeclareMathOperator{\bd}{bd}
\newcommand{\dhr}{\downharpoonright}
\theoremstyle{plain}
\newtheorem{theorem}[definition]{Theorem}
\newtheorem{proposition}[definition]{Proposition}
\newtheorem{lemma}[definition]{Lemma}
\newtheorem{corollary}[definition]{Corollary}
\title[Approximating diamonds at an inaccessible cardinal]{Approximating diamond principles on products at an inaccessible cardinal}
\author{Omer Ben-Neria
 and Jing Zhang 
}
\date{\today}
\begin{document}

\maketitle

\blfootnote{2010 \emph{Mathematics Subject Classification}. Primary: 03E02, 03E35, 03E55.\\ 
The first author was partially supported by the Israel Science Foundation (Grant 1832/19). \\
The second author was supported by the European Research Council (grant agreement ERC-2018-StG 802756).\\
The authors would like to thank Spencer Unger and Miha Habic for valuable discussions and comments on this work. 
\par\nopagebreak
\textit{E-mails}: \texttt{omer.bn@mail.huji.ac.il}, \texttt{jingzhan@alumni.cmu.edu}
}

\begin{abstract}
We isolate \emph{the approximating diamond principles}, which are consequences of the diamond principle at an inaccessible cardinal.
We use these principles to find new methods for negating the diamond principle at large cardinals. Most notably, we demonstrate, using Gitik's overlapping extenders forcing, a new method to get the consistency of the failure of the diamond principle at a large cardinal $\theta$ without changing cofinalities or adding fast clubs to $\theta$. In addition, we show that the approximating diamond principles necessarily hold at a weakly compact cardinal. This result, combined with the fact that in all known models where the diamond principle fails the approximating diamond principles also fail at an inaccessible cardinal, exhibits essential combinatorial obstacles to make the diamond principle fail at a weakly compact cardinal. 
\end{abstract}

\section{Introduction}

Fix a regular uncountable cardinal $\theta$ and a stationary set $S\subset \theta$. A sequence $\langle x_\alpha\subset \alpha: \alpha\in S\rangle$ is a \emph{$\diamondsuit(S)$-sequence} if for any $X\subset \theta$, the set $\{\alpha\in S: X\cap \alpha = x_\alpha\}$ is stationary. This principle is due to Jensen \cite{MR309729} and was discovered during the course of his fine structural analysis of the constructible universe. Note that $\diamondsuit(\theta)$ implies $\theta^{<\theta}=\theta$.

The history of the relation between the diamond and compactness principles, goes back to the work of Kunen and Jensen \cite{JensenKunen}, who showed that  $\diamondsuit(\theta)$ must hold at every subtle cardinal. 
In fact, they prove that the stronger property $\diamondsuit(Reg^\theta)$ holds at such cardinals, where $Reg^\theta$ is the stationary collection of regular cardinals below $\theta$.

The consistency of $\neg\diamondsuit(Reg^\theta)$ along with $\theta$ being weakly compact, $\Pi^{m}_n$-indescribable, strongly unfolddable
was established by Woodin, Hauser \cite{MR1164732} and D\v{z}amonja-Hamkins \cite{MR2279655} respectively. Each of these consistency results is established from its minimal corresponding  large cardinal assumption.

In contrast, $\neg\diamondsuit(\theta)$ at Mahlo $\theta$, is known to have a significantly stronger consistency strength. 
 Jensen \cite{Jensen69} has shown that $\neg\diamondsuit(\theta)$ 
at a Mahlo cardinal $\theta$ implies the existence of $0^\#$.  Zeman \cite{Zeman00} improved
the lower bound to the existence of an inner model with a cardinal $\theta$, in which for every $\gamma < \theta$, the set
$\{ \alpha < \theta \mid o(\alpha) \geq \gamma\}$ is stationary in $\theta$.

There are also tight connections between the diamond principles and cardinal arithmetic. Shelah \cite{MR2596054}, building on the previous work by Gregory \cite{MR485361} and after a series of partial results, showed that for any uncountable cardinal $\lambda$, $2^\lambda=\lambda^+$ iff $\diamondsuit(\lambda^+)$. On the other hand, it was previously known from work of Jensen \cite{MR0384542} that CH does not imply $\diamondsuit(\omega_1)$. See \cite{MR2777747} for a comprehensive survey regarding the diamond principle at successor cardinals. The situation at inaccessible cardinals is different. In \cite{MR1998104}, Shelah showed that it is consistent with GCH that $\theta$ is inaccessible and $\neg \diamondsuit(S)$ for some stationary $S$ such that $\theta-S$ is fat, from the minimal assumption. 
Works of Gitik (see for example \cite{MR882254}) demonstrate the consistency of GCH along with the saturation of $\mathrm{NS}_{\theta}\restriction S$, hence $\neg\diamondsuit(S)$, where $S$ is also co-fat. However, in all of these scenarios, the diamond principle only fails partially. In other words, $\diamondsuit(\theta)$ holds in all of these models mentioned above.

Two of the main problems in this area are: 
\begin{enumerate}
\item Is it consistent that $\diamondsuit(\theta)$ fails at a weakly compact $\theta$?
\item Is it consistent with GCH that $\diamondsuit(\theta)$ fails?
\end{enumerate}

The only previously known method to violate the diamond principle fully at an inaccessible cardinal is due to Woodin \cite{CummingsWoodin}, based on the analysis of Radin forcing extensions. This approach has been further developed in \cite{MR3960897} and \cite{JingOmerRadin} to obtain the failure of the full diamond at a stationary reflecting cardinal and even at a cardinal carrying no amenable C-sequences.  

Nevertheless, this method is not applicable for solving the two problems above, due to the following. In the following, when we say ``Radin forcing at $\theta$'', we mean ``Radin forcing defined from a measure sequence on $\theta$''.

\begin{enumerate}
\item It was shown in \cite{JingOmerRadin} that in any Radin forcing extension at $\theta$, if $\theta$ is weakly compact, then $\diamondsuit(\theta)$ holds.
\item It was shown in \cite{CummingsMagidor} that in the Radin forcing extension at $\theta$ over a model of GCH, if $\theta$ is regular, then $\diamondsuit(\theta)$ holds.
\end{enumerate}

These results motivate the need for new methods to make the diamond principle fail at an inaccessible cardinal.

In order to state the main results of this paper, we need some definitions. 

\begin{definition}\label{definition:SimplifiedDomination}
Let $\delta$ be a limit ordinal and $f,g: \delta\to \Ord$ be functions. We write $f<^* g$ if $\{\alpha<\delta: f(\alpha)<g(\alpha)\}$ contains a closed unbounded set in $\delta$ if $cf(\delta)>\omega$ and contains co-bounded set if $cf(\delta)=\omega$.
\end{definition}

Let $\theta$ be an inaccessible cardinal, $\vec{\lambda}=\langle \lambda_i<\theta: i<\theta\rangle$ be an increasing sequence of cardinals cofinal in $\theta$ and $S\subset \theta$ be a stationary set. We define what it means to be an \emph{approximating diamond sequence on $\Pi_{i<\theta}\lambda_i$} with support $S$.

\begin{definition}\label{definition: SimplifiedDiamond_lambda}
We say $\langle g_\gamma\in \Pi_{i<\gamma}\lambda_i: \gamma\in S\rangle$ is a $\diamondsuit_{\vec{\lambda}}(S)$-sequence if for any $f\in \Pi_{i<\theta} \lambda_i$, the collection $\{\gamma\in S: g_\gamma \not <^* f\restriction \gamma\}$ is stationary.
\end{definition}

 The principle can be considered as the inaccessible analogue of the \emph{parametrized diamond principle} corresponding to the bounding number $\mathfrak{b}$ studied by 
Moore, Hru\v{s}\'{a}k and D\v{z}amonja \cite{MR2048518} at the level of $\omega_1$.\\

Easy observation reveals that for every sequence $\vec{\lambda} = \la \lambda_i : i < \theta\ra$, $\diamondsuit_{\vec{\lambda}}(S)$ follows from $\diamondsuit(S)$ (see Lemma \ref{lemma: diamondimplies}), suggesting to force  $\neg\diamondsuit_{\vec{\lambda}}(\theta)$ as a new route for obtaining $\neg\diamondsuit(\theta)$ at a large cardinal $\theta$. 
Indeed, we show in Theorem \ref{Thm:ApproxDiaomondFailsInRadin} that $\diamondsuit_{\vec{\beth}}(\theta)$ fails in Woodin's model \cite{CummingsWoodin}, where $\vec{\beth}$ denote the sequence $\langle 2^\alpha: \alpha<\theta\rangle$.
A similar argument shows that $ \diamondsuit_{\vec{\beth}}(\theta)$ fails in various Extender-based Radin forcing of Merimovich \cite{MerimovichEBF}.
More surprising to the authors was that one can force the failure of $\diamondsuit_{\vec{\beth}}(\theta)$ without changing cofinalities or adding fast clubs. This is fundamentally different from Woodin's method.
To obtain this result we  
utilize a version of the recent forcing discovered by Gitik \cite{GITIK:OverlappingEBF} using overlapping extenders.

\begin{theorem}\label{theorem: mainconsistency}
Suppose $\theta$ is an inaccessible limit of $\theta^{++}$-strong cardinals, then there exists a cofinality and cardinal preserving forcing extension $V[G]$ of $V$ in which: 
\begin{enumerate}
\item $\diamondsuit_{\vec{\beth}}(\theta)$ fails,
\item $V,V[G]$ agree on cardinals and cofinalities,  and
\item any closed unbounded subset of $\theta$ in $V[G]$ contains a closed unbounded subset of $\theta$ from $V$.
\end{enumerate}
\end{theorem}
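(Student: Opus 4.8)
The plan is to let $\mathbb{P}$ be a version of Gitik's overlapping extenders forcing \cite{GITIK:OverlappingEBF}, built from a sequence $\langle E_\alpha : \alpha<\theta\rangle$ of extenders witnessing that a cofinal set of cardinals $\kappa_\alpha \nearrow \theta$ are $\theta^{++}$-strong. The $\theta^{++}$-strength is exactly what allows the forcing to blow up $2^\theta$ to $\theta^{++}$ and to lay out $\theta^{++}$-many generic functions inside $\Pi_{i<\theta}2^i$, while the overlapping of the strength of $E_\alpha$ with the critical points $\kappa_\beta$ for $\beta>\alpha$ is what permits this at the \emph{inaccessible} $\theta$ without collapsing cardinals or singularizing $\theta$. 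Conclusions (2) and (3) I would read off from the analysis of $\mathbb{P}$: a strong chain condition together with the Prikry property for the direct extension order yields preservation of cardinals and cofinalities, and (3) follows from the Prikry property by the standard argument that membership in a club name can be decided by direct extensions, so that the ordinals forced into the club by a fixed stem already contain a club computed in $V$. I will treat these as consequences of Gitik's framework, adapted to the present extender sequence.

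For (1) the heart of the matter is a domination statement. Unwinding Definition \ref{definition: SimplifiedDiamond_lambda}, the failure of $\diamondsuit_{\vec{\beth}}(\theta)$ amounts to the following: for every sequence $\vec{g}=\langle g_\gamma \in \Pi_{i<\gamma}2^i : \gamma<\theta\rangle$ in $V[G]$ there exist $f\in\Pi_{i<\theta}2^i$ and a club $C\subseteq\theta$ with $g_\gamma <^* f\restriction\gamma$ for all $\gamma\in C$; then $\{\gamma : g_\gamma \not<^* f\restriction\gamma\}$ is disjoint from $C$, hence nonstationary, so $\vec g$ is not a $\diamondsuit_{\vec{\beth}}(\theta)$-sequence. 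I would obtain the displayed domination from the key lemma that $\mathbb{P}$ adds a $\le^*$-increasing sequence $\langle \ell_\xi : \xi<\theta^{++}\rangle$ which is cofinal in $(\Pi_{i<\theta}2^i,<^*)$ (so a scale of length $2^\theta=\theta^{++}$) and which is moreover \emph{coherent}, in the sense of being an exact upper bound of its proper initial segments at club-many coordinates $\gamma<\theta$.

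Granting such a scale, the derivation runs as follows. Since $\theta$ is inaccessible and GCH is preserved below $\theta$, each product $\Pi_{i<\gamma}2^i$ has size $<\theta$, so cofinality of the scale gives, for each $\gamma$, an index $\xi(\gamma)<\theta^{++}$ with $g_\gamma <^* \ell_{\xi(\gamma)}\restriction\gamma$. As $cf(\theta^{++})>\theta$ the map $\gamma\mapsto\xi(\gamma)$ is bounded, say by $\xi^*$, and I would set $f:=\ell_{\xi^*}$. The one genuine subtlety is that $\ell_{\xi(\gamma)}\le^* \ell_{\xi^*}$ holds only on a tail of $\theta$, whose starting point might a priori exceed $\gamma$; this is precisely the regularity obstruction that defeats a naive pointwise supremum (a sup of $\theta$-many values below $2^i<\theta$ need not stay below $2^i$). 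Here the coherence of the scale is used: on the club of good coordinates $\gamma$ the error in $\ell_{\xi(\gamma)}\restriction\gamma \le^* \ell_{\xi^*}\restriction\gamma$ is bounded below $\gamma$, so $g_\gamma <^* f\restriction\gamma$ for all $\gamma$ in a club, which by (3) may be taken in $V$. The blow-up of $2^\theta$ to $\theta^{++}$ is what supplies the room for this argument, since with $2^\theta=\theta^+$ the scale is essentially canonical and the guessing sequences can keep pace.

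The main obstacle, and the place where the overlapping-extenders machinery does its real work, is the construction of the scale together with its coherence: I must arrange the $\theta^{++}$-many generic functions into a $<^*$-increasing cofinal sequence whose members cohere below club-many $\gamma<\theta$, so that passing from the pointwise indices $\xi(\gamma)$ to the single bound $\xi^*$ costs only a bounded error below each such $\gamma$. The overlapping strength is indispensable precisely here, as it lets the generic function at each coordinate reflect enough of the functions indexed below $\theta^{++}$ to serve as an exact upper bound at the relevant points, which a non-overlapping extender sequence cannot achieve while keeping $\theta$ regular. Establishing this coherence from the generic object, and checking that the sequence-level domination survives because each $g_\gamma$ — although $\vec g$ itself lives in $V[G]$ — is locally captured by the scale via the smallness of $\Pi_{i<\gamma}2^i$, is the technical crux; the remaining ingredients are comparatively routine once the Prikry property of $\mathbb{P}$ is in hand.
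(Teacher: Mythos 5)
Your architecture is the same as the paper's: Gitik's overlapping-extenders forcing, preservation facts (2),(3) quoted from its Prikry-type analysis, and (1) derived from a generically added dominating family in $\Pi_{i<\theta}2^i$ with a local--global coherence property. In the paper these are, respectively, Corollary \ref{cor:nofastclub} (which, note, does not follow from the Prikry property alone: the direct extension order is not $\theta$-closed, so the paper routes (2) and (3) through the fusion-type Lemma \ref{Lem:StrongCapturing} and Proposition \ref{proposition:almostSacks}), and the $t$-functions of Definition \ref{Definition:t-functions} together with Propositions \ref{proposition:scale} and \ref{proposition:globalScale}. Two framing corrections: the paper takes the extenders of length $\lambda=\theta^+$, so no blow-up of $2^\theta$ to $\theta^{++}$ is needed --- what is essential is only $\lambda\geq\theta^+$, i.e.\ strength past $\theta$ (and the last proposition of Section \ref{Section: failure} shows $\lambda=\theta$ genuinely fails); also GCH is \emph{not} preserved below $\theta$ (on a club, $2^\delta=\lambda_\delta>\delta^{++}$), though your smallness claim for $\Pi_{i<\gamma}2^i$ survives since $\theta$ stays strong limit.

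The genuine gap is the step ``$\Pi_{i<\gamma}2^i$ has size $<\theta$, so cofinality of the scale gives $\xi(\gamma)$ with $g_\gamma<^*\ell_{\xi(\gamma)}\restriction\gamma$.'' This is a non sequitur: since $<^*$ on $\Pi_{i<\theta}2^i$ ignores nonstationary (in particular bounded) sets of coordinates, any $<^*$-increasing cofinal scale remains one after replacing every member by $0$ below a fixed $\gamma$; so global cofinality yields no domination of functions on $\gamma$, and smallness of $\Pi_{i<\gamma}2^i$ cannot manufacture it (the same regularity obstruction you invoke against pointwise suprema applies locally). Local capture is exactly where the forcing machinery must enter, and in the paper it is obtained by a density argument over names, not semantically from a fixed scale in $V[G]$: Lemma \ref{lemma:NameReduction} produces, by fusion, a single condition below which for \emph{every} $\gamma$ and every $\nu$ in the measure-one set at $\gamma$ the name $\dot{g}_\gamma$ is a $\pi^*_{\nu}(\po_{\vec{E}\restriction \gamma})$-name; then the $\vec{\kappa}_\gamma^{++}$-c.c.\ of that projected forcing against the regularity of the reflected length $\lambda^\gamma(\nu(\kappa_\gamma))$ bounds it by a \emph{local} $t$-function with index $\tau_\nu$ (Proposition \ref{proposition:scale} applied locally); finally the $\tau_\nu$'s are amalgamated through the extender ultrapower into $\tau_\gamma<\lambda$, $\tau$ is chosen above all $\tau_\gamma$ by regularity of $\lambda$, and the measure-one sets are shrunk so that $\nu(\tau)>\tau_\nu$ --- which is what makes the coherence $(\dot{t}_{\nu(\tau)})^{\nu}=^*\dot{t}_\tau\restriction\gamma$ usable (Proposition \ref{proposition:globalScale}). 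Note also that because $\vec{g}$ lives in $V[G]$ and not in any intermediate model, all $\gamma$ must be handled simultaneously on one condition; a per-$\gamma$ factoring argument does not suffice, and this simultaneity is precisely what the fusion buys. If you restate your key lemma as ``for club-many $\gamma$, the restrictions of the family to $\gamma$ are $<^*$-increasing and cofinal in $(\Pi_{i<\gamma}2^i)^{V[G]}$,'' then your derivation of (1) from it is correct --- but that lemma is then the entire content of Section \ref{Section: failure}, and proving it requires the argument above rather than the smallness/cofinality reasoning you gave.
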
 

The third condition 
in Theorem \ref{theorem: mainconsistency} holds
suggests that $V[G]$ is a relatively tame forcing extension of $V$ in some sense and will be useful in proving certain compactness properties of $\theta$ in $V$ remain valid in $V[G]$.
This reasoning is shown to be true to an extent in Section \ref{Section: Compactness}.
However, this approach of forcing $\neg\diamondsuit(\theta)$ must not succeed at the level of weak compactness, as evidenced by the following theorem. 
\begin{theorem}\label{theorem: mainWC}
If $\theta$ is weakly compact, then $\diamondsuit_{\vec{\lambda}}(Reg^\theta)$ holds for any increasing sequence $
\vec{\lambda}$ in $\theta$. Moreover, $\Diamond_{\vec{\lambda}}(S)$ holds for every $S$ in the weak compact filter on $\theta$. 
\end{theorem}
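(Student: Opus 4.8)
The plan is to prove the \emph{Moreover} clause, since the statement for $Reg^\theta$ follows from it: ``$\alpha$ is inaccessible'' is expressible by a $\Pi^1_1$ sentence over $V_\alpha$, so by $\Pi^1_1$-indescribability the set of inaccessibles below $\theta$ lies in the weakly compact filter, and it is contained in $Reg^\theta$, which is therefore in the filter as well. So I fix $S$ in the weakly compact filter and an increasing $\vec\lambda$ cofinal in $\theta$, and aim to produce a single $\diamondsuit_{\vec\lambda}(S)$-sequence $\langle g_\gamma : \gamma\in S\rangle$.

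I would use the embedding characterization of weak compactness: for every $A\subseteq V_\theta$ there are a transitive $M$ with $V_\theta\subseteq M$, $|M|=\theta$, ${}^{<\theta}M\subseteq M$, $A\in M$, and an elementary $j\colon M\to N$ into a transitive $N$ with $\mathrm{crit}(j)=\theta$. Two features are decisive: since $\mathrm{crit}(j)=\theta$ we have $j(\vec\lambda)\restriction\theta=\vec\lambda$ and $j(f)\restriction\theta=f$ for every $f\in M\cap\Pi_{i<\theta}\lambda_i$; and because $S$ is in the weakly compact filter, $\theta\in j(S)$ whenever $S\in M$. The verification step is then: given an arbitrary $f\in\Pi_{i<\theta}\lambda_i$ and a club $C\subseteq\theta$, choose the embedding with $f,C,\langle g_\gamma\rangle,S\in M$. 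As $C$ is club we get $\theta\in j(C)$, and $\theta\in j(S)$ is automatic. If in addition the reflected value $g^N_\theta:=j(\langle g_\gamma\rangle)(\theta)$ satisfies $g^N_\theta\not<^* f$ in $N$, then $\theta$ witnesses $N\models\exists\gamma\in j(S)\cap j(C)\,[\,j(\langle g_\gamma\rangle)(\gamma)\not<^* j(f)\restriction\gamma\,]$, and pulling this back through $j$ yields $\gamma\in S\cap C$ with $g_\gamma\not<^* f\restriction\gamma$. Since $C$ was arbitrary, $\{\gamma\in S : g_\gamma\not<^* f\restriction\gamma\}$ is stationary, as required. (Note that $\theta$ is regular in both $N$ and $V$, so the relevant stationarity computations transfer; $\not<^*$ only asks that $\{i<\theta: g^N_\theta(i)\ge f(i)\}$ be stationary in $\theta$, which is far weaker than guessing $f$ exactly — this is precisely why the argument can survive at a weakly compact cardinal, where full $\diamondsuit(\theta)$ need not hold.)

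Everything thus reduces to building $\langle g_\gamma\rangle$ so that $g^N_\theta$ is $\not<^*$-below \emph{no} member of $M\cap\Pi_{i<\theta}\lambda_i$. The mechanism I would use is diagonalization against a partition: fix a partition $\langle T_\xi:\xi<\theta\rangle$ of $\theta$ into stationary sets. If at stage $\gamma$ one has a family $A_\gamma=\{a^\gamma_\xi:\xi\in I_\gamma\}\subseteq\Pi_{i<\gamma}\lambda_i$ that is $<^*_\gamma$-\emph{dominating}, with $|I_\gamma|\le\gamma$, then defining $g_\gamma(i):=a^\gamma_{\xi(i)}(i)$ where $i\in T_{\xi(i)}\cap\gamma$ produces, for \emph{every} $f$, some $a^\gamma_{\xi_0}\ge^*_\gamma f\restriction\gamma$, so $g_\gamma\ge f$ on the stationary set $T_{\xi_0}\cap\gamma$ and hence $g_\gamma\not<^*_\gamma f\restriction\gamma$. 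Thus the set $E$ of stages $\gamma\in Reg^\theta$ carrying a dominating family of size $\le\gamma$ would satisfy $E\subseteq\{\gamma\in S:g_\gamma\not<^* f\restriction\gamma\}$ for \emph{all} $f$ simultaneously, giving the sequence in one stroke. Here weak compactness would enter by reflection: the statement ``$\Pi_{i<\theta}\lambda_i$ has a $<^*_\theta$-dominating family of size $\le\theta$'' is of low complexity over $V_\theta$, so if it holds it reflects to a set of $\gamma$ in the weakly compact filter, making $E$ large.

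The step I expect to be the main obstacle is exactly this last point, and it is where weak compactness (rather than mere inaccessibility) must be used essentially: a single sequence must defeat all $2^\theta$ functions $f$, whereas any fixed stage-by-stage bookkeeping only ``sees'' $\theta$ of them, and exact guessing of $f\restriction\gamma$ (the naive route) amounts to $\diamondsuit(\theta)$, which need not hold. Two difficulties must be confronted: whether the local products $\Pi_{i<\gamma}\lambda_i$ genuinely admit dominating families of size $\le\gamma$ on a large set of $\gamma$ (so that the clean ``fixed $E$'' argument applies), and, if not, how to replace outright domination by the weaker escape property that still yields $g^N_\theta\not<^* f$. The fallback I would pursue is to choose the embedding \emph{after} $f$, realize $g^N_\theta$ as $[\langle g_\gamma\rangle]_U$ for a $\theta$-complete $M$-ultrafilter $U$ extending the club filter, and arrange that the family $\{\,\{\gamma\in S:g_\gamma\not<^* f'\restriction\gamma\}: f'\in M\cap\Pi_{i<\theta}\lambda_i\,\}$ generates a proper $\theta$-complete filter (using the diagonalized $\langle g_\gamma\rangle$, each such set contains a stationary trace, and one must verify the $<\theta$-intersection property), then extend it to $U$ by weak compactness; this forces $g^N_\theta\not<^* f$ for the given $f$. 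Establishing that the diagonalized sequence makes this family a filter base — equivalently, constructing a single sequence whose escape sets intersect stationarily across all of $M$ — is the crux and the place I expect the real work to lie.
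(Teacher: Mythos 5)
Your reduction of the $Reg^\theta$ case to the ``Moreover'' clause and your verification framework are both sound: for a $\theta$-model $M$ with ${}^{<\theta}M\subseteq M$ one has $P(\gamma)\subseteq M$ for every $\gamma<\theta$, so stationarity below $\theta$ (hence $\not<^*$) is absolute between $M$ and $V$, and the pull-back through $j$ works as you describe. The gap is that the theorem's actual content --- the construction of $\langle g_\gamma\rangle$ --- is never carried out, and both mechanisms you propose for it fail. First, the dominating families you need typically do not exist: if $\vec\lambda$ consists of regular cardinals, say $\lambda_i=\aleph_{i+1}$, then for any $\{f_\alpha:\alpha<\gamma\}\subseteq\Pi_{i<\gamma}\lambda_i$ the function $g(i)=\sup\{f_\alpha(i)+1:\alpha<i\}$ lies in the product (since $\mathrm{cf}(\lambda_i)>|i|$) and satisfies $g(i)>f_\alpha(i)$ for all $i>\alpha$, so $\{i:g(i)<f_\alpha(i)\}$ is bounded and no $f_\alpha$ dominates $g$ mod clubs; hence the dominating number of $(\Pi_{i<\gamma}\lambda_i,<^*)$ exceeds $\gamma$ at every relevant $\gamma$, your set $E$ is empty, and the $\theta$-level statement you hope to reflect is simply false for such $\vec\lambda$. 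Second, even where it is true, ``this family is $<^*$-dominating'' is, for a fixed family as parameter, of the form $\forall f\,\exists C\,(\dots)$ with the club $C$ a second-order object, i.e.\ $\Pi^1_2$ over $V_\theta$; weak compactness is only $\Pi^1_1$-indescribability and does not reflect it. Your fallback (arranging that the escape sets generate a $\theta$-complete filter and extending it to an $M$-ultrafilter) is exactly the missing construction, and you explicitly leave it open, so the proposal does not contain a proof.

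The paper avoids needing any dominating structure by arguing by contradiction with a minimal-counterexample recursion (Theorem \ref{theorem:weakcompactdiamond}). Assuming every candidate sequence fails, it defines $g_\gamma$ recursively so that $g_\gamma$ dominates the earlier values $g_\eta$ (mod $J_\eta$) for $J_\gamma^*$-many $\eta\in S\cap\gamma$ \emph{and} is $<_{J_\gamma}$-minimal with this property; such a minimal witness exists because $J_\gamma$ is $\sigma$-complete, so an infinite $<_{J_\gamma}$-descending chain would give an infinite descending sequence of ordinals at a single coordinate (Claim \ref{claim:uncountable}). The assumed failure of the theorem is precisely what makes $g_\theta$ non-vacuously defined. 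A weakly compact embedding $j:M\to N$ with $\theta\in j(A\cap S)$ then yields, by elementarity, $N\models f_\theta<_{(J_\theta)^N}g_\theta$ where $f_\theta=j(\langle g_\gamma\rangle)(\theta)$, and the upward absoluteness of the ideals for $\theta$-models (a club of $N$ is a genuine club) converts this to $f_\theta<_{J_\theta}g_\theta$ in $V$, contradicting the minimality of $g_\theta$. In short, weak compactness is used to reflect \emph{minimality}, and $\sigma$-completeness (well-foundedness of $<_J$) does the work you were hoping to extract from dominating families; this contradiction-plus-minimality pattern is the idea missing from your proposal.
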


The last theorem prompts the investigation of a possible implication from $\diamondsuit_{\vec{\lambda}}(S)$ to $\diamondsuit(\theta)$, for some sequence $\vec{\lambda}$ and stationary set $S \subset \theta$.  
We do not know of such implication in general but are able to prove an implication under GCH, in the case $S \subset Sing^\theta_{>
\omega}$, where $Sing^\theta_{>
\omega}$ is the set of singular ordinals of uncountable cofinalities. 
\begin{theorem}\label{theorem: mainGCH}
 $\diamondsuit_{\vec{\beth}}(Sing^\theta_{>
\omega})$ and $\diamondsuit(Sing^\theta_{>
\omega})$ are equivalent under GCH. \\
\end{theorem}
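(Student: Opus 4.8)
The plan is to prove the nontrivial direction $\diamondsuit_{\vec\beth}(Sing^\theta_{>\omega})\Rightarrow\diamondsuit(Sing^\theta_{>\omega})$, the converse being immediate from Lemma~\ref{lemma: diamondimplies}. Rather than attempting to read off $X\cap\gamma$ from a single caught $\gamma$ (which is hopeless, since one $g_\gamma$ will fail to $<^*$-dominate the codes of many different sets), I would reduce the target to producing a $\diamondsuit^-$-sequence on $S:=Sing^\theta_{>\omega}$: a sequence $\langle\mathcal A_\gamma:\gamma\in S\rangle$ with $\mathcal A_\gamma\subseteq P(\gamma)$ and $|\mathcal A_\gamma|\le|\gamma|$ such that $\{\gamma\in S:X\cap\gamma\in\mathcal A_\gamma\}$ is stationary for every $X\subseteq\theta$, and then invoke the classical equivalence $\diamondsuit^-(S)\Leftrightarrow\diamondsuit(S)$ (due to Kunen). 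The point of passing through $\diamondsuit^-$ is that the ``non-domination'' guessing supplied by $\diamondsuit_{\vec\beth}$ is exactly of the form ``$X\cap\gamma$ lies in a small $g_\gamma$-determined family,'' provided the coding is chosen correctly.

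Under GCH fix, for each $i<\theta$, a bijection $c_i\colon 2^i\to P(i)$ (possible since $2^i=|P(i)|$), and to each $X\subseteq\theta$ associate the global function $f_X\in\Pi_{i<\theta}2^i$ given by $f_X(i)=c_i^{-1}(X\cap i)$. Writing $\Phi_\gamma(Y):=\langle c_i^{-1}(Y\cap i):i<\gamma\rangle$ for $Y\subseteq\gamma$, this coding is coherent: $f_X\restriction\gamma=\Phi_\gamma(X\cap\gamma)$, and $\Phi_\gamma$ is injective. Given the $\diamondsuit_{\vec\beth}(S)$-sequence $\langle g_\gamma:\gamma\in S\rangle$, define $\mathcal A_\gamma:=\{Y\subseteq\gamma:g_\gamma\not<^*\Phi_\gamma(Y)\}$. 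By construction $X\cap\gamma\in\mathcal A_\gamma$ precisely when $g_\gamma\not<^* f_X\restriction\gamma$, so $\diamondsuit_{\vec\beth}(S)$ applied to $f_X$ gives that $\{\gamma\in S:X\cap\gamma\in\mathcal A_\gamma\}$ is stationary for every $X$. Thus $\langle\mathcal A_\gamma\rangle$ will be a $\diamondsuit^-$-sequence as soon as the cardinality bound $|\mathcal A_\gamma|\le|\gamma|$ is established.

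The cardinality bound is the crux, and it is here that both GCH and the restriction to singular $\gamma$ of uncountable cofinality are used. The key is to choose the codes $c_i$ so that, for every $\gamma\in S$, the image $\operatorname{ran}(\Phi_\gamma)$ is a genuine \emph{scale} in $(\Pi_{i<\gamma}2^i,<^*)$: a $<^*$-increasing, $<^*$-cofinal, well-ordered sequence of length $\gamma^+$ (GCH supplies the correct length via $2^{|\gamma|}=|\gamma|^+$ and $\mathrm{pp}(\gamma)=\gamma^+$). Granting this, $\Phi_\gamma$ transports $P(\gamma)$ onto the scale, and for any $g_\gamma$ the cofinality of the scale yields some $Y_0$ with $g_\gamma<^*\Phi_\gamma(Y_0)$; since $<^*$ is upward closed along an increasing scale, the set of $Y$ with $g_\gamma\not<^*\Phi_\gamma(Y)$ is a proper initial segment of the scale, of order type below $\gamma^+$ and hence of cardinality $\le|\gamma|$. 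This gives $|\mathcal A_\gamma|\le|\gamma|$. I would emphasize that a merely $<^*$-\emph{linear} image (e.g.\ one ordered lexicographically by least difference) does \emph{not} suffice: without well-foundedness the initial segment can have size $\gamma^+$, so a true scale is essential.

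The main obstacle I anticipate is producing a \emph{single} sequence $\langle c_i:i<\theta\rangle$ that makes $\operatorname{ran}(\Phi_\gamma)$ a scale simultaneously at every $\gamma\in S$; this amounts to a coherent system of scales below $\theta$. I would build the $c_i$ by recursion on $i<\theta$ with bookkeeping, using inaccessibility of $\theta$ to close off the construction (each stage manipulates objects of size $<\theta$). At stage $i$ one must respect the $<^*$-comparisons already forced by pairs whose least symmetric difference lies below $i$, while continually introducing new dominating values to preserve cofinality; the hypothesis $cf(\gamma)>\omega$ enters precisely at the limit stages $\gamma\in S$, where $<^*$ is the club-filter quotient (so that agreement on a cobounded set upgrades to agreement on a club, keeping $\operatorname{ran}(\Phi_\gamma)$ $<^*$-comparable and well-ordered) and where one must amalgamate fewer than $cf(\gamma)$ previously imposed constraints without collapsing cofinality. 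Verifying that this recursion yields a well-ordered cofinal scale at every $\gamma\in S$ is the technical heart of the argument; once the codes are in hand, the reduction to $\diamondsuit^-(S)$ and the final passage to $\diamondsuit(S)$ are routine.
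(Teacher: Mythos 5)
Your opening moves are the same as the paper's: the paper also proves the nontrivial direction by producing a $\diamondsuit^-(S)$-family from the $\diamondsuit_{\vec\beth}(S)$-sequence and then invoking the classical equivalence, and your observation that non-domination ``$g_\gamma\not<^* f_X\restriction\gamma$'' should place $X\cap\gamma$ in a small $g_\gamma$-determined family is exactly the right use of the hypothesis. The gap is the cardinality bound $|\mathcal A_\gamma|\le|\gamma|$, which you make rest entirely on an unproven assertion: the existence of a \emph{single} sequence of bijections $c_i:2^i\to P(i)$, $i<\theta$, such that for \emph{every} $\gamma\in S$ the coherent image $\{\Phi_\gamma(Y):Y\in P(\gamma)\}$ is a $<^*$-increasing, $<^*$-cofinal, well-ordered scale of length $\gamma^+$ in $(\Pi_{i<\gamma}2^i,<^*)$. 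GCH gives scales of length $\gamma^+$ in such products, but it is a far stronger demand that the scale be realized by a coding that is coherent ($\Phi_\gamma(Y)(i)$ depends only on $Y\cap i$) and that a single $\theta$-sized object $\langle c_i:i<\theta\rangle$ work simultaneously at all $\gamma\in S$. Your ``recursion with bookkeeping'' does not address the actual difficulty: at stage $i$ the order $c_i^{-1}$ must decide every pair $A\ne B\in P(i)$ one way, while the $\theta^+$-many pairs $Y\ne Y'$ of subsets of the various $\gamma>i$ that restrict to $(A,B)$ demand \emph{opposite} decisions (different extensions of $(A,B)$ must end up on opposite sides of the target well-orders), and each such pair separately needs its own direction honored on a club of its $\gamma$; on top of this you must keep the limit orders well-founded and the images cofinal. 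No mechanism for scheduling these conflicting club-stability demands is given, so the proof is incomplete at precisely the point you call its technical heart, and it is not at all clear the required object exists under GCH.

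The paper's proof shows that this entire coherent-scale structure is unnecessary: the smallness of the guessing family comes from the Pressing Down Lemma, not from any structure imposed on the codes. Fix \emph{arbitrary} enumerations $\langle x^\beta_i: i<\beta^+\rangle$ of $P(\beta)$, bijections $f_\beta:|\beta|\to\beta$, and for $\gamma\in S$ a club $c_\gamma\subseteq\gamma$ of order type $cf(\gamma)$. If $g_\gamma\not<^* l_X\restriction\gamma$, where $l_X(i)$ is the index of $X\cap i$, then $l_X(\beta)\le g_\gamma(\beta)$ on a set stationary in $\gamma$, so $\beta\mapsto f^{-1}_{g_\gamma(\beta)+1}(l_X(\beta))$ is regressive there; since $cf(\gamma)>\omega$, it is bounded by some $\gamma'<\gamma$ on a stationary $x\subseteq c_\gamma$, whence $X\cap\gamma=\bigcup_{\beta\in x}x^\beta_{f_{g_\gamma(\beta)+1}(h(\beta))}$ for a function $h$ on $x$ with bounded range. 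The paper's family $\mathcal C_\gamma$ consists of all such unions over pairs $(x,h)$ with $x\subseteq c_\gamma$ and $h$ of bounded range, and since $\gamma$ is a singular strong limit, GCH gives $|\mathcal C_\gamma|=\gamma$ outright --- no well-ordering, cofinality, or cross-level coherence is ever needed. To salvage your write-up you would either have to actually construct the coherent scale coding (a substantial and possibly impossible task) or replace your $\mathcal A_\gamma$ by this pressing-down parametrization.
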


The organization of the paper is: 
\begin{enumerate}
\item Section \ref{Section:ZFC} contains the proofs of Theorem \ref{theorem: mainWC} and  \ref{theorem: mainGCH} as well as some generalizations.
\item Section \ref{Section: Radin} outlines a few results in the Radin forcing extension. In particular, the consistency of $\neg\diamondsuit_{\vec{\lambda}}(\theta)$ and the consistency of $\diamondsuit_{\vec{\beth}}(Sing^\theta_{>\omega}) + \neg \diamondsuit(\theta)$ are demonstrated.
\item Section \ref{Section: forcing} gives a short description of Gitik's forcing with overlapping extenders of inaccessible length 
and review some of its key properties.
\item Section \ref{Section: failure} focuses on the proof of Theorem \ref{theorem: mainconsistency}. It also includes a proof showing that the assumption of the extenders being strong past $\theta$ is needed. 

\item Section \ref{Section: Compactness} illustrates the possibilities of preserving certain compactness properties of $\theta$ from the ground model, extending some results from \cite{JingOmerRadin}.

\item Section \ref{Section: Questions} concludes with open questions.
\end{enumerate}





    


\section{ZFC Results}\label{Section:ZFC}
Let $\theta$ be a strongly inaccessible cardinal.

\begin{definition}
Let $J$ be an ideal on $\gamma$ and $f,g\in {}^\gamma \Ord$, then $f <_J g$ abbreviates $\{\alpha\in \gamma: f(\alpha)\geq g(\alpha)\}\in J$. Similary we can define $\leq_J, =_J$.
\end{definition}
Unless otherwise noted, all the ideals we consider are proper.

\begin{notation}
Given an ideal $J$, let 

\begin{itemize}
\item $J^+$ denote the collection of positive sets, namely $J^+=\{A: A\not\in J\}$ and
\item $J^*$ denote its dual filter, namely $J^*=\{A: A^c\in J\}$.
\end{itemize} 
\end{notation}

\begin{definition}
Suppose we are given 
\begin{enumerate}
    \item $\vec{J}=\langle J_\gamma\subset P(\gamma): \gamma \leq \theta\rangle$ where each $J_\gamma$ is an ideal on $\gamma$,
    \item an increasing sequence $\vec{\lambda}=\langle \lambda_i: i<\theta\rangle$ cofinal in $\theta$, and
    \item $S\in J_\theta^+$.
\end{enumerate} 
Then a \emph{$\diamondsuit_{\vec{J},\vec{\lambda}}(S)$-sequence} is $\langle g_\gamma\in \Pi_{i<\gamma}\lambda_i: \gamma\in S\rangle$ satisfying: for any $f\in \Pi_{i<\theta}\lambda_i$, $\{\gamma\in S: g_\gamma\not <_{J_{\gamma}} f\restriction \gamma\}\in J_\theta^+$.
\end{definition}
We say $\diamondsuit_{\vec{J},\vec{\lambda}}(S)$ holds if there exists a $\diamondsuit_{\vec{J},\vec{\lambda}}(S)$-sequence. In this paper, unless noted otherwise, we will assume the index set $S$ is a subset of the club set $\{\gamma<\theta: \forall i<\gamma, \lambda_i<\gamma\}$. 

\begin{lemma}\label{lemma: diamondimplies}
$\diamondsuit(S)$ implies $\diamondsuit_{\vec{J}, \vec{\lambda}}(S)$ for any $\vec{J}, \lambda, S\in \mathrm{NS}_\theta^+$ satisfying that $J_\theta \subseteq \mathrm{NS}_\theta$.
\end{lemma}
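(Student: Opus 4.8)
The plan is to fix a coding of functions by subsets of $\theta$ so that a single $\diamondsuit(S)$-sequence can be post-processed, uniformly, into a $\diamondsuit_{\vec J,\vec\lambda}(S)$-sequence. First I would fix a $\diamondsuit(S)$-sequence $\langle x_\alpha \subseteq \alpha : \alpha \in S\rangle$ together with the G\"odel pairing function $c : \theta \times \theta \to \theta$; since $\theta$ is inaccessible, the set $C = \{\gamma < \theta : c[\gamma\times\gamma] \subseteq \gamma \text{ and } c\restriction(\gamma\times\gamma) \text{ is a bijection onto } \gamma\}$ is a club in $\theta$. Using $c$ I would then define, for each $\gamma \in S$, a function $g_\gamma \in \Pi_{i<\gamma}\lambda_i$ by \emph{decoding} $x_\gamma$: for $i < \gamma$, let $g_\gamma(i)$ be the unique $\eta < \lambda_i$ with $c(i,\eta) \in x_\gamma$ if such an $\eta$ exists and is unique, and $g_\gamma(i) = 0$ otherwise. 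Crucially, this definition depends only on $x_\gamma$ and on no target function, so $\langle g_\gamma : \gamma \in S\rangle$ is a single candidate sequence.

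Next I would fix an arbitrary $f \in \Pi_{i<\theta}\lambda_i$ and let $X_f = \{c(i,f(i)) : i < \theta\} \subseteq \theta$ code its graph. By $\diamondsuit(S)$ the set $\{\gamma \in S : X_f \cap \gamma = x_\gamma\}$ is stationary, hence so is $T_f := \{\gamma \in S : X_f \cap \gamma = x_\gamma\} \cap C$. The key claim is that $g_\gamma = f\restriction\gamma$ for every $\gamma \in T_f$: by the standing convention $S \subseteq \{\gamma : \forall i<\gamma,\ \lambda_i < \gamma\}$, for $\gamma \in T_f$ and $i < \gamma$ we have $f(i) < \lambda_i < \gamma$, so $c(i,f(i)) < \gamma$ by closure under $c$, and therefore $c(i,f(i)) \in X_f \cap \gamma = x_\gamma$. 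Injectivity of $c$ ensures that $f(i)$ is the unique $\eta < \lambda_i$ with $c(i,\eta) \in x_\gamma$, so the decoding returns $g_\gamma(i) = f(i)$.

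To finish, note that for $\gamma \in T_f$ we have $g_\gamma = f\restriction\gamma$, so $\{i < \gamma : g_\gamma(i) \geq f(i)\} = \gamma \notin J_\gamma$ because $J_\gamma$ is proper; thus $g_\gamma \not<_{J_\gamma} f\restriction\gamma$. Hence $T_f \subseteq \{\gamma \in S : g_\gamma \not<_{J_\gamma} f\restriction\gamma\}$, and since $T_f$ is stationary while $J_\theta \subseteq \mathrm{NS}_\theta$ yields $\mathrm{NS}_\theta^+ \subseteq J_\theta^+$, this set lies in $J_\theta^+$, as required. The only genuinely delicate point is the coding bookkeeping of the first step---checking that the closure points of $c$ form a club and that the decoding recovers $f\restriction\gamma$ exactly on $T_f$---whereas the positivity at the end is immediate from properness of $J_\gamma$ and the hypothesis $J_\theta \subseteq \mathrm{NS}_\theta$.
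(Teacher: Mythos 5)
Your proof is correct and follows essentially the same route as the paper's: guess $f\restriction\gamma$ exactly on a stationary set, observe that exact guessing trivially gives $g_\gamma \not<_{J_\gamma} f\restriction\gamma$ by properness of $J_\gamma$, and convert stationarity into $J_\theta$-positivity via $J_\theta \subseteq \mathrm{NS}_\theta$. The only difference is expository: the paper starts from the functional form of $\diamondsuit(S)$ (a sequence $\langle g'_\gamma \in {}^\gamma\gamma : \gamma \in S\rangle$ guessing restrictions of functions), whereas you unfold that standard equivalence explicitly with a G\"odel-pairing coding of graphs by subsets of $\theta$.
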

\begin{proof}
Let $\langle g'_\gamma\in {}^\gamma \gamma: \gamma\in S\rangle$ be a given $\diamondsuit(S)$-sequence. Define a $\diamondsuit_{\vec{J}, \vec{\lambda}}(S)$ by letting $g_\gamma=g'_\gamma$ if $g'_\gamma\in \Pi_{i<\gamma}\lambda_i$, and $g_\gamma\equiv 0$ otherwise. Given $f\in \Pi_{i<\theta} \lambda_i$, there exists a stationary set $S'\subset S$ such that for all $\gamma\in S'$, $f\restriction \gamma = g'_\gamma = g_\gamma$, where the last equality follows from the definition of $g_\gamma$. As $J_\theta \subseteq \mathrm{NS}_\theta$, $S'\in J_\theta^+$.
\end{proof}

\begin{definition}
A transitive model $M\models \mathrm{ZFC}^-$ ($\mathrm{ZFC}-$ Power Set Axiom) is a \emph{$\theta$-model} if it contains $\theta$ and ${}^{<\theta} M\subset M$. 
\end{definition}

A typical $\theta$-model will be the transitive collapse of some $X\prec H(\chi)$ where $\chi$ is a large enough regular cardinal, $\theta+1\subset X$ and ${}^{<\theta} X\subset X$.

\begin{definition}
A sequence of ideals $\vec{J}=\langle J_\gamma: \gamma\leq \theta\rangle$ is 
\begin{enumerate}
    \item \emph{uniformly definable} if there exist a formula $\varphi(x,y)$ and a parameter $a\subset \theta$, such that for any large enough $\gamma$, $A\in J_\gamma$ iff $H(\gamma^+)\models \varphi(A, a\cap \gamma)$.
    \item \emph{$\theta$-upward uniformly definable} if it is uniformly definable as witnessed by $\varphi(x, y), a$ and for any $\theta$-model $N$ containing $a$, $(J_\theta)^N\subset J_\theta$. More precisely, the latter asserts for any $A\in N\cap P(\theta)$, if $N\models`` H(\theta^+)\models \varphi(A, a) "$, then $H(\theta^+)\models \varphi(A, a)$, namely, $A\in J_\theta$.
\end{enumerate}
\end{definition}

\begin{remark}
Usually it suffices to work with $\langle J_\gamma: \gamma\in A\cup \{\theta\}\rangle$ where $A\in J_\theta^*$. Hence, for the rest of this article, whenever we define a sequence of ideals on a large subset of $\theta$, we implicitly mean that any ideal can be filled in for missing coordinates.
\end{remark}

\begin{example}
We give two concrete examples of $\theta$-upward uniformly definable sequence of ideals: 
\begin{enumerate}
\item $\vec{J}_{bdd}=_{\mathrm{def}}\langle P_{bdd}(\gamma): \gamma\in \lim \theta\rangle$, where $P_{bdd}(\gamma)$ is the collection of all bounded subsets of $\gamma$. Let $\varphi(x)$ be ``there exists $y$ which is the largest cardinal and there exists $\eta<y$ such that for all $\gamma'>\gamma, \gamma'\not\in x$''. Then we see that $A\in J_\gamma$ iff $H(\gamma^+)\models \varphi(A)$. It is easy to check that this definition is upward absolute for $\theta$-models.

\item $\vec{\mathrm{NS}}=\langle \mathrm{NS}_{\gamma}: \gamma\in \theta\cap \mathrm{cof}(>\omega)\rangle$, where $\mathrm{NS}_\gamma$ is the nonstationary ideal on $\gamma$. Let $\varphi(x)$ be ``there exists $y$ which is the largest cardinal and there exists a closed unbounded $c\subset y$ such that $c\cap x=\emptyset$''. Then we see that $A\in J_\gamma$ iff $H(\gamma^+)\models \varphi(A)$. The key point to see that this definition is upward absolute for $\theta$-models is that for any $\theta$-model $N$, if $C\in N$ such that $N\models C$ is club in $\theta$, then $H(\theta^+)\models C$ is club in $\theta$.
\end{enumerate}
\end{example}

Note that $\diamondsuit_{\vec{\lambda}}(S)$ is exactly $\diamondsuit_{\vec{J}^*, \vec{\lambda}}(S)$ where  $J^*_\gamma = NS_\gamma$ when $cf(\gamma) > \omega$ and $J^*_\gamma = P_{bdd}(\gamma)$ otherwise.
Using $\vec{J}^*$, the following Theorem implies Theorem \ref{theorem: mainWC}.  
\begin{theorem}\label{theorem:weakcompactdiamond}
If $\theta$ is weakly compact, then $\diamondsuit_{\vec{J}, \vec{\lambda}}(S)$ holds when
\begin{enumerate}
    \item $\vec{\lambda}=\la \lambda_i: i<\theta\ra$ is increasing and cofinal in $\theta$,
    \item $\vec{J}$ is $\theta$-upward uniformly definable,
    \item $J_\theta$ is $\sigma$-complete,
    \item $J_\theta \subset \mathrm{WC}_\theta$, where $\mathrm{WC}_\theta$ is the weakly compact ideal on $\theta$,
    \item $S\in \mathrm{WC}_\theta^+$.
\end{enumerate}
\end{theorem}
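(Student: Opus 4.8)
The plan is to produce a single sequence and verify it works one $f$ at a time, converting $J_\theta$-positivity of the relevant set into the more robust $\mathrm{WC}_\theta$-positivity. Writing $G_f := \{\gamma\in S : g_\gamma\not<_{J_\gamma} f\restriction\gamma\}$ for the guessing set of $f$, I first record the reduction that makes weak compactness usable: by hypothesis (4), $J_\theta\subseteq\mathrm{WC}_\theta$, hence $\mathrm{WC}_\theta^+\subseteq J_\theta^+$, so it suffices to construct $\vec g=\langle g_\gamma : \gamma\in S\rangle$ with $G_f\in\mathrm{WC}_\theta^+$ for every $f\in\Pi_{i<\theta}\lambda_i$. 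I will use the embedding form of weakly compact positivity: $X\in\mathrm{WC}_\theta^+$ exactly when every $\theta$-model $M\ni X$ admits an elementary $j\colon M\to N$ into a transitive $N$ with $\mathrm{crit}(j)=\theta$ and $\theta\in j(X)$, and the $N$ arising this way are again $\theta$-models agreeing with $V$ about clubs in $\theta$.

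Before constructing $\vec g$ I would replace $S$ by its intersection with the club of strong-limit closure points of $\vec\lambda$ (still in $\mathrm{WC}_\theta^+$, since $\mathrm{NS}_\theta\subseteq\mathrm{WC}_\theta$), so that for $\gamma\in S$ the set $\Pi_{i<\gamma}\lambda_i$ lies in $V_\gamma$ and has cardinality at most $\gamma$. Fixing a wellordering $<^*$ of $H(\theta)$, I define $g_\gamma$ by recursion on $\gamma\in S$ as the $<^*$-least $h\in\Pi_{i<\gamma}\lambda_i$ that is a \emph{pending counterexample} to the segment built so far, namely $\{\beta\in S\cap\gamma : g_\beta\not<_{J_\beta} h\restriction\beta\}\in J_\gamma$ (setting $g_\gamma\equiv 0$ if none exists). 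The recursion is uniformly definable from $<^*$, the parameter $a$ of hypothesis (2), $\vec\lambda$ and $S$, so it is preserved by elementary embeddings; in particular $j(\vec g)(\theta)$ is computed in $N$ by the identical rule at coordinate $\theta$.

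The heart of the verification is controlling the reflected ideal $(J_\theta)^N=(j(\vec J))_\theta$. Taking $j\colon M\to N$ as an ultrapower by an $M$-normal ultrafilter $U$ on $\theta$, Łoś's theorem identifies $(j(\vec J))_\theta$ with $[\gamma\mapsto J_\gamma]_U$, whence for $A\in M\cap P(\theta)$ one has $A\in (J_\theta)^N$ iff $\{\gamma<\theta : A\cap\gamma\in J_\gamma\}\in U$. Combined with hypothesis (2) this yields the crucial \emph{coherence lemma}: if $A\in J_\theta$ then, reflecting the defining formula $\varphi(A,a)$ down a continuous chain of elementary submodels of $H(\theta^+)$, the set $\{\gamma : A\cap\gamma\in J_\gamma\}$ contains a club and so lies in every normal $U$. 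Applying this with $A=G_f$ (under the assumption, toward a contradiction, that $G_f\in J_\theta$) shows $G_f\in (J_\theta)^N$ for every such $U$: that is, $f$ is recognized in $N$, at the coordinate $\theta$, as a pending counterexample. Here $\sigma$-completeness from hypothesis (3) guarantees $(J_\theta)^N$ is again a proper $\sigma$-complete ideal in $N$ so that ``pending'' keeps its first-order meaning, while $j(f)\restriction\theta=f$ and $j(\lambda_i)=\lambda_i$ for $i<\theta$ make the comparison at $\theta$ literally a comparison with $f$.

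The step I expect to be the main obstacle is closing the loop at the coordinate $\theta$: deducing, from ``$g^N_\theta=j(\vec g)(\theta)$ is the $<^*$-least pending counterexample in $N$'' together with ``$f$ is also pending in $N$'', that in fact $g^N_\theta\not<_{(J_\theta)^N} f$, i.e. $\theta\in j(G_f)$, contradicting $G_f\in J_\theta$. The difficulty is precisely that minimization selects \emph{some} pending function, not necessarily $f$, so the defeat rule and the wellordering $<^*$ must be arranged so that the reflected least counterexample cannot be dominated by $f$ — I would play the capture of $f$ obtained above against a second choice of $U$ concentrating on $B:=S\setminus G_f\in\mathrm{WC}_\theta^+$, for which $\theta\in j(B)$ forces $g^N_\theta<_{(J_\theta)^N} f$. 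Making these two facts collide is where the real work lies; once done, $M$ was arbitrary, so $G_f\in\mathrm{WC}_\theta^+\subseteq J_\theta^+$, and as $f$ was arbitrary, $\vec g$ witnesses $\diamondsuit_{\vec J,\vec\lambda}(S)$. The passage to Theorem \ref{theorem: mainWC} is then immediate via the sequence $\vec{J}^*$ identified before the statement.
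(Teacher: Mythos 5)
Your proposal diverges from the paper's argument in two essential places, and both are genuine gaps rather than deferred details. The first is the ``coherence lemma.'' The claim that $A\in J_\theta$ implies $\{\gamma<\theta : A\cap\gamma\in J_\gamma\}$ contains a club is false for abstract $\theta$-upward uniformly definable sequences, and the proposed proof cannot work: uniform definability refers to the \emph{true} $H(\gamma^+)$, whereas the transitive collapse of a size-$\gamma$ elementary submodel of $H(\theta^+)$ is not $H(\gamma^+)$ (it sees only $\gamma$ many subsets of $\gamma$), so reflecting $\varphi(A,a)$ along a chain says nothing about membership in $J_\gamma$. For a counterexample satisfying all hypotheses of Theorem~\ref{theorem:weakcompactdiamond}, take $J_\gamma=\mathrm{NS}_\gamma$ for regular cardinals $\gamma\leq\theta$ and $J_\gamma=P_{bdd}(\gamma)$ for singular cardinals $\gamma$ (filling in other coordinates arbitrarily, per the paper's convention); this is uniformly definable with no parameter, since $H(\gamma^+)$ knows whether its largest cardinal is regular, and it is $\theta$-upward definable since any club of a $\theta$-model is a real club. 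If $A$ is unbounded and nonstationary, then $\{\gamma: A\cap\gamma\in J_\gamma\}$ is disjoint from the stationary set of singular cardinals that are limit points of $A$, so it contains no club. What your argument actually requires is the downward transfer $G_f\in J_\theta\Rightarrow G_f\in (J_\theta)^N$, and hypothesis (2) supplies only the opposite inclusion $(J_\theta)^N\subseteq J_\theta$; the paper's proof is engineered so that only this upward direction is ever used.

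The second gap is the one you flag yourself: ``making these two facts collide'' is not residual work but the entire content of the theorem, and your setup cannot produce the collision. The fact that $f$ is pending in $N$ and the fact that $g^N_\theta<_{(J_\theta)^N}f$ come from ultrapowers by two \emph{different} ultrafilters $U$, hence concern two different functions $j(\vec g)(\theta)$ living in two different models, so they cannot contradict one another; and inside a single $N$, the $<^*$-least pending function bears no useful relation to $f$. The paper closes exactly this gap with a different device: it minimizes in the order $<_{J_\gamma}$ itself rather than in an auxiliary wellordering. The $\sigma$-completeness of $J_\theta$ is used precisely here --- Claim~\ref{claim:uncountable} shows there is no infinite $<_J$-descending chain, so among functions with the domination property (1) there is a $<_{J_\gamma}$-minimal one --- and not, as you suggest, to keep $(J_\theta)^N$ a proper $\sigma$-complete ideal. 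Then, assuming the diamond fails, $g_\theta$ is defined non-vacuously \emph{in $V$}; one reflects at a point of the $\mathrm{WC}_\theta$-positive set $A\cap S$ on which $g_\theta$ dominates the earlier $g_\tau$'s, elementarity gives $j(\vec g)(\theta)<_{(J_\theta)^N}g_\theta$, and the upward inclusion $(J_\theta)^N\subseteq J_\theta$ converts this into $j(\vec g)(\theta)<_{J_\theta}g_\theta$ in $V$, together with property (1) for $j(\vec g)(\theta)$ in $V$, contradicting the $<_{J_\theta}$-minimality of $g_\theta$. This mechanism --- pull the reflected function back to $V$ and defeat a $V$-minimal object --- is unavailable to a wellorder-least choice, and it is what your outline is missing.
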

\begin{proof}
Suppose for the sake of contradiction that the conclusion fails. More explicitly, this means for any sequence $\langle g_\gamma\in \Pi_{i<\gamma}\lambda_i: \gamma\in S\rangle$, there exist $f\in \Pi_{i<\theta}\lambda_i$ and $A\in J^*_\theta$ such that for all $\gamma\in A\cap S$,  $g_\gamma<_{J_\gamma} f\restriction \gamma$. Note that by the assumption that $J_\theta \subset \mathrm{WC}_\theta$, $A\in \mathrm{WC}_\theta^*$. Let $\varphi(x,y)$ and $a\subset \theta$ witness the $\theta$-upward uniform definability of $\vec{J}$.

Recursively define $\langle g_\gamma\in \Pi_{i<\gamma}\lambda_i: \gamma\in \lim (\theta+1)\rangle$ such that for each $\gamma\in \lim \theta+1$, if there exists $g\in \Pi_{i<\gamma} \lambda_i$ such that:
\begin{enumerate}
\item for $J_\gamma^*$-many $\eta\in S\cap \gamma$, $g\restriction \eta>_{J_\eta} g_{\eta}$ and 
\item there does not exist $f<_{J_\gamma} g$ satisfying the property above,
\end{enumerate} 
then $g_\gamma$ is any such $g$. Otherwise, $g_\gamma\equiv 0$ and we say $g_\gamma$ is \emph{defined vacuously}.

\begin{claim}\label{claim:uncountable}
If $J_\gamma$ is $\sigma$-complete and there exists $g$ satisfying (1), then there exists $g$ satisfying (2).
\end{claim}
\begin{proof}[Proof of the Claim]
Suppose for the sake of contradiction, no $g$ satisfies (2). Define $g_0>_{J_\gamma} g_1>_{J_\gamma}g_2 >_{J_\gamma}\cdots g_n>_{J_\gamma}\cdots $ with each $g_n$ satisfying (1). Since $J_\gamma$ is $\sigma$-complete, there exists $\gamma'<\gamma$ such that for any $m<n$, $g_m(\gamma') > g_n(\gamma')$. Contradiction.
\end{proof}
Note that by Claim \ref{claim:uncountable} and the initial assumption, $g_\theta$ is not defined vacuously. Therefore, there exists $A\in J_\theta^*$, such that for any $\tau\in A\cap S$, $g_\tau<_{J_\tau} g_{\theta}\restriction \tau$.

As $A\cap S\in \mathrm{WC}^+_\theta$, we can find  
\begin{itemize}
\item a transitive $\theta$-model $M$ containing $\vec{g}=\langle g_\gamma: \gamma \in S\cup \{\theta\}\rangle$, $\vec{J}\restriction \theta$ and $a$,
\item a transitive $\theta$-model $N$ and an elementary embedding $j: M\to N$ with critical point $\theta$,
\item $\theta\in j(A\cap S)$.
\end{itemize}
Let $f_\theta = j(\vec{g})(\theta)$. Observe that $f_\theta$ is not defined vacuously since $j(g_\theta)\restriction \theta =g_\theta\in N$ is a witness to property (1) and Claim \ref{claim:uncountable} implies there is a function in $N$ satisfying both (1) and (2). The key point is: due to the fact that $(J_\theta)^N\subset J_\theta$, if we can find an infinite $<_{(J_\theta)^N}$-decreasing sequence, then it is an infinite $<_{J_\theta}$-decreasing sequence, contradicting with Claim \ref{claim:uncountable}. By the elementarity of $j$ and the fact that $\theta\in j(S\cap A)$, we have that $N\models f_\theta<_{J_\theta} j(g_\theta)\restriction \theta=g_\theta$. Since $\vec{J}$ is $\theta$-upward uniformly definable, we have $(J_\theta)^N \subset J_\theta$. As a result, in $V$, we have that $f_\theta<_{J_\theta}g_\theta$. This contradicts with the minimality of $g_\theta$.
\end{proof}

\begin{remark}
Under the same hypothesis as in Theorem \ref{theorem: mainWC}, the previous proof gives the following strengthening of $\diamondsuit_{\vec{J},\vec{\lambda}}(S)$, which can be seen as a symmetric version of our approximating property: there exists a sequence $\langle g_\gamma\in \Pi_{i<\gamma} \lambda_i: \gamma\in S\rangle$ such that for any $f\in \Pi_{i<\theta}\lambda_i$, the set $\{\gamma\in S: f\restriction \gamma \not <_{J_\gamma} g_\gamma \text{ and }g_\gamma\not <_{J_\gamma} f\restriction \gamma\}\in J^+_\theta$. 
\end{remark}



\begin{remark}
Similar proof shows that: if $\theta$ is a $\Pi^{1}_{n+1}$-indescribable cardinal, then for any increasing $\vec{\lambda}$ cofinal in $\theta$, there exists $\langle g_\gamma\in \Pi_{i<\gamma} \lambda_i: \gamma<\theta\rangle$ such that for any $f\in \Pi_{i<\theta} \lambda_i$, the collection $\{\gamma: g_\gamma \not <_{\mathbf{\Pi^1_n}(\gamma)} f\restriction \gamma\}$ is positive with respect to the ideal $\mathbf{\Pi^1_n}(\theta)$. Here $\mathbf{\Pi^1_n}(\gamma)$ denotes the $\Pi^1_n$-indescribable ideal on subsets of $\gamma$.
\end{remark}

\begin{remark}
Recall a theorem of Woodin (see \cite{MR1164732}) that it is consistent that $\theta$ is weakly compact and $\diamondsuit(\mathrm{Reg}^\theta)$ fails. As a result, $\diamondsuit(\mathrm{Reg}^\theta)$ is not implied by $\diamondsuit_{\vec{\mathrm{NS}}, \vec{\lambda}}(\mathrm{Reg}^\theta)$, in contrast with the following proposition.
\end{remark}

\begin{proposition}[GCH]\label{proposition:gch}
Let $S$ be a stationary subset of $S^\theta_{>\omega} = \theta\cap \{\gamma: \omega<cf(\gamma)<\gamma\}$. Then $\diamondsuit_{\vec{\mathrm{NS}},\vec{\beth}}(S)$ implies $\diamondsuit(S)$.
\end{proposition}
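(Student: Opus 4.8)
The plan is to use GCH to code subsets of $\theta$ faithfully by functions in $\Pi_{i<\theta}\lambda_i$, push the given guessing sequence through the coding, and read off a $\diamondsuit(S)$-sequence. First I would pin down the arithmetic. Since $\theta$ is inaccessible and $S\subseteq S^\theta_{>\omega}$ is contained in the club $\{\gamma:\forall i<\gamma,\ \lambda_i<\gamma\}$, every $\gamma\in S$ is in fact a strong-limit singular cardinal with $\omega<cf(\gamma)<\gamma$; hence under GCH one has $|\Pi_{i<\gamma}\lambda_i|=\gamma^{\,cf(\gamma)}=\gamma^+=2^{|\gamma|}=|P(\gamma)|$. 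I would arrange $\vec{\beth}$ so that $\lambda_{i+1}=2^{\lambda_i}$, fixing bijections $e_i\colon P(\lambda_i)\to\lambda_{i+1}$, and attach to each $X\subseteq\theta$ a code $f_X\in\Pi_{i<\theta}\lambda_i$ whose value at coordinate $i$ records (via $e_{i-1}$, and via a sufficiently long initial segment at limit coordinates) an initial piece $X\cap\delta_i$ with $\sup_{i<\gamma}\delta_i=\gamma$. Then $f_X\restriction\gamma$ determines $X\cap\gamma$, and, conversely, agreement of a function $g$ with some $f_X\restriction\gamma$ on a cofinal (in particular, club) set of coordinates lets one recover $X\cap\gamma$ as the induced cofinal union; write $\mathrm{dec}(g)$ for this canonical decoding. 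Given a $\diamondsuit_{\vec{\mathrm{NS}},\vec{\beth}}(S)$-sequence $\langle g_\gamma\rangle$, the candidate $\diamondsuit(S)$-sequence is $x_\gamma:=\mathrm{dec}(g_\gamma)$.

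Next I would argue by contradiction. If $\langle x_\gamma\rangle$ fails to be a $\diamondsuit(S)$-sequence, fix $X$ and a club $C\subseteq\theta$ with $x_\gamma\ne X\cap\gamma$ for every $\gamma\in S\cap C$. Applying $\diamondsuit_{\vec{\mathrm{NS}},\vec{\beth}}(S)$ to the single function $f_X$ produces a stationary set of $\gamma\in S\cap C$ at which $g_\gamma\not<_{\mathrm{NS}_\gamma}f_X\restriction\gamma$, that is, $g_\gamma(i)\ge f_X(i)$ for stationarily many $i<\gamma$. The objective is to upgrade this to $g_\gamma=_{\mathrm{NS}_\gamma}f_X\restriction\gamma$, since the injectivity of the coding then forces $\mathrm{dec}(g_\gamma)=X\cap\gamma$, contradicting the choice of $C$ and thereby finishing the proof.

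The hard part — and the whole content of the argument — is exactly this promotion of the \emph{one-sided}, non-domination datum ``$g_\gamma\ge f_X$ on a stationary set'' to genuine equality modulo $\mathrm{NS}_\gamma$. One should note that no bijective coordinate-coding can convert ordinal domination into set containment (a well-order extending $(P(\lambda_i),\subseteq)$ would have to reverse infinite $\subseteq$-descending chains), so the upgrade cannot be purely formal and must use the two features isolated by the hypotheses: that $cf(\gamma)>\omega$, whence $\mathrm{NS}_\gamma$ is $\sigma$-complete and, by the argument of Claim \ref{claim:uncountable}, admits no infinite $<_{\mathrm{NS}_\gamma}$-descending chain; and GCH, which makes $(\Pi_{i<\gamma}\lambda_i,<_{\mathrm{NS}_\gamma})$ a scale-like structure of size $\gamma^+=|P(\gamma)|$. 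Concretely I would run a minimal-counterexample argument: were $g_\gamma$ strictly to dominate $f_X$ on the relevant stationary set, one could pass to a $<_{\mathrm{NS}_\gamma}$-smaller code still consistent with $g_\gamma$, and iterate; $\sigma$-completeness caps the descent while the coherence of the codes forces the minimal witness to coincide with $g_\gamma$ on a club, pinning $x_\gamma=X\cap\gamma$. An alternative packaging, which localizes the same difficulty, is to extract from $\langle g_\gamma\rangle$ a ``$\diamondsuit^-$''-sequence $\langle\mathcal A_\gamma\rangle$ with $\mathcal A_\gamma\subseteq P(\gamma)$ guessing every $X\cap\gamma$ stationarily, and then invoke Kunen's equivalence $\diamondsuit^-(S)\Leftrightarrow\diamondsuit(S)$ at the inaccessible $\theta$; here the delicate step is to keep $|\mathcal A_\gamma|\le|\gamma|$ rather than the a priori bound $\gamma^+$, and it is again $\sigma$-completeness together with the coherence of initial-segment codes that must be used to prune the candidate tree down to size $|\gamma|$.
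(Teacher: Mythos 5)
You correctly locate the crux — converting the one\nobreakdash-sided datum ``$g_\gamma\not<_{\mathrm{NS}_\gamma}f_X\restriction\gamma$'' into usable guessing — but your main route founders there, and not because the step is merely delicate: the proposed upgrade to $g_\gamma=_{\mathrm{NS}_\gamma}f_X\restriction\gamma$ is false. The approximating\nobreakdash-diamond property is monotone: if $\langle g_\gamma:\gamma\in S\rangle$ is a $\diamondsuit_{\vec{\mathrm{NS}},\vec{\beth}}(S)$-sequence and $g'_\gamma\geq g_\gamma$ pointwise, then $\langle g'_\gamma\rangle$ is again one, since any stationary set witnessing $g_\gamma\not<_{\mathrm{NS}_\gamma}f\restriction\gamma$ also witnesses $g'_\gamma\not<_{\mathrm{NS}_\gamma}f\restriction\gamma$. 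So for any fixed $X$, the sequence $g'_\gamma(\beta)=\max\bigl(g_\gamma(\beta),f_X(\beta)+1\bigr)$ is a legitimate $\diamondsuit_{\vec{\mathrm{NS}},\vec{\beth}}(S)$-sequence which lies strictly above $f_X\restriction\gamma$ at \emph{every} coordinate of \emph{every} $\gamma$; equality modulo $\mathrm{NS}_\gamma$ fails everywhere, and $\mathrm{dec}(g'_\gamma)$ need have nothing to do with $X$. Hence no proof can proceed by decoding a single set $x_\gamma=\mathrm{dec}(g_\gamma)$ per level: the hypothesis only yields an upper bound $f_X(\beta)\leq g_\gamma(\beta)$ on a stationary set of $\beta$, and one candidate per level cannot exploit an upper bound; you must output $\gamma$ many candidates per level, i.e.\ aim directly for $\diamondsuit^{-}(S)$. (Your appeal to Claim \ref{claim:uncountable}-style $\sigma$-completeness is also misplaced here: in the paper that claim serves to pick minimal members in the weak-compactness recursion, not to convert bounds into equalities.)

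Your ``alternative packaging'' is indeed the paper's route, but the step you leave open — pruning the candidate family at level $\gamma$ from size $\gamma^{+}$ down to $\gamma$ — is the whole theorem, and the mechanism is not $\sigma$-completeness; it is a re-coding trick followed by Fodor's lemma. Under GCH fix enumerations $\langle x^\beta_i:i<\beta^{+}\rangle$ of $P(\beta)$ and bijections $f_\beta:|\beta|\to\beta$, and code $X$ by $l_X(\beta)=$ the index of $X\cap\beta$, so $l_X\in\Pi_{\beta<\theta}\beta^{+}=\Pi_{\beta<\theta}2^{\beta}$. At a $\gamma\in S$ where $A=\{\beta<\gamma:l_X(\beta)\leq g_\gamma(\beta)\}$ is stationary, the point is that $l_X(\beta)$, though possibly of size $\beta^{+}$, lies in $g_\gamma(\beta)+1$, and $|g_\gamma(\beta)+1|\leq\beta$ by GCH; so $i_\beta=f^{-1}_{g_\gamma(\beta)+1}(l_X(\beta))<\beta$ re-codes the index as an ordinal below $\beta$, \emph{relative to the known value} $g_\gamma(\beta)$. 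The map $\beta\mapsto i_\beta$ is regressive on the stationary set $A\cap c_\gamma$ (with $c_\gamma\subseteq\gamma$ a club of order type $cf(\gamma)$; this, and only this, is where $cf(\gamma)>\omega$ enters), so pressing down gives a stationary $x\subseteq A\cap c_\gamma$ and $\gamma'<\gamma$ with $i_\beta<\gamma'$ on $x$. Then $X\cap\gamma=\bigcup_{\beta\in x}x^{\beta}_{f_{g_\gamma(\beta)+1}(h(\beta))}$ for the bounded function $h:\beta\mapsto i_\beta$, and since $\gamma$ is a strong limit there are only $\gamma$ many pairs $(x,h)$ with $x\subseteq c_\gamma$ and $h$ bounded. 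Thus the family $\mathcal{C}_\gamma$ of all sets of this form has size $\gamma$ and guesses $X\cap\gamma$ at stationarily many $\gamma\in S$, i.e.\ it is a $\diamondsuit^{-}(S)$-sequence; Kunen's sectioning argument then yields $\diamondsuit(S)$. The bijection re-coding plus Fodor is precisely the idea missing from your sketch.
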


\begin{proof}
We may assume any $\gamma\in S$ is a singular cardinal.
Fix a $\diamondsuit_{\vec{J}, \vec{\lambda}}(S)$-sequence $\vec{g}=\langle g_\gamma: \gamma\in S\rangle$. For each $\gamma\in S$, let $c_\gamma\subset \gamma$ be a club of order type $cf(\gamma)$. For each $\gamma\in S$, let $F_{bdd}(\gamma)=\{h\in {}^{c_\gamma}\gamma: \mathrm{ran}(h)\subset \gamma'<\gamma\}$. By the cardinal arithmetic assumption, $|F_{bdd}(\gamma)|=\gamma$ for each strong limit singular cardinal $\gamma$. For each ordinal $\beta$, let $f_\beta: |\beta|\to \beta$ be some bijection and let $\langle x^\beta_{i}: i<\beta^+\rangle$ be an enumeration of $P(\beta)$.

Define $\mathcal{C}_\gamma=\{\bigcup_{\beta\in x} x^\beta_{f_{g_\gamma(\beta)+1}(h(\beta))}: x\subset c_\gamma, h\in F_{bdd}(\gamma), \dom(h)=x\}$. Note that $|\mathcal{C}_\gamma|=\gamma$. We check that $\langle \mathcal{C}_\gamma: \gamma\in S\rangle$ is a $\diamondsuit^-(S)$-sequence. Let $X\subset \theta$ be given. Let $l_X\in \Pi_{i<\theta} i^+$ be such that for each $i$, $l_X(i)=j<i^+$ where $x^i_j=X\cap i$. By the fact that $\vec{g}$ is a $\diamondsuit_{\vec{J},\vec{\lambda}}(S)$-sequence, $S'=\{\gamma\in S: \exists A \in \mathrm{NS}^+_\gamma, \forall \beta\in A, l_X(\beta) \leq g_\gamma(\beta)\}\in \mathrm{NS}_\theta^+$. It suffices to show that for each $\gamma\in S'$, $X\cap \gamma\in \mathcal{C}_\gamma$. Fix $\gamma\in S'$ as witnessed by $A\in \mathrm{NS}^+_\gamma$. Consider the following regressive function: $\beta\in A\cap c_\gamma \mapsto i_\beta<\beta$ such that $f_{g_\gamma(\beta)+1}(i_\beta)=l_X(\beta)$. By the Pressing Down lemma, there exists a stationary $x\subset c_\gamma\cap A$ and some $\gamma'<\gamma$ such that for all $\beta\in x$, $i_\beta<\gamma'$. Consider $h\in F_{bdd}(\gamma)$ defined on $x$ mapping $\beta\in x$ to $i_\beta$. Then $\bigcup_{\beta\in x} x^{\beta}_{f_{g_\gamma(\beta)+1}(h(\beta))}=\bigcup_{\beta\in x} x^{\beta}_{f_{g_\gamma(\beta)+1}(i_\beta)}=\bigcup_{\beta\in x} x^{\beta}_{l_X(\beta)}=X\cap \gamma \in \mathcal{C}_\gamma$. \end{proof}

Theorem \ref{theorem: mainGCH} is an immediate consequence of the above.
\begin{proof}(Theorem \ref{theorem: mainGCH})
$\diamondsuit(Sing^\theta_{>\omega})$ implies $\diamondsuit_{\vec{\beth}}(Sing^\theta_{>\omega})$ by Lemma \ref{lemma: diamondimplies}. Assuming GCH, 
$\diamondsuit_{\vec{\beth}}(Sing^\theta_{>\omega})$ implies
$\diamondsuit(Sing^\theta_{>\omega})$ by Proposition
\ref{proposition:gch}.
\end{proof}

\begin{remark}
Theorem \ref{theorem: SeparatingDiamonds} in the next section will show that the GCH assumption in Proposition \ref{proposition:gch} cannot be completely waived. However, the cardinal arithmetic in the model of Theorem \ref{theorem: SeparatingDiamonds} is quite specific, and it is possible that the result of Proposition \ref{proposition:gch} can be obtained from mild cardinal arithmetic assumptions such as $2^\alpha$ is regular for most $\alpha < \theta$. See problems \ref{question: approxdiamondTOdiamondWithCardArithmetic} and
\ref{question: approxdiamondEverewhereToDiamond} in the open problems section.
\end{remark}

In the rest of the section we discuss circumstances where weak compactness is strong enough to give rise to a diamond sequence. These results will be used in the next section to give a new and simple proof that diamond must hold in Radin generic extensions in which the critical point is weakly compact, which is the main result from \cite{JingOmerRadin}.

\begin{definition}
We say $\mathcal{F}\subset P(\theta)$ is 
\begin{enumerate}
    \item \emph{closed under restrictions} if for any $X\in \mathcal{F}$ and any $\alpha<\theta$, $X\cap \alpha\in \mathcal{F}$;
    \item \emph{closed under branching} if for any $X\subset \theta$ such that for each $\alpha<\theta$, $X\cap \alpha\in \mathcal{F}$, then $X\in \mathcal{F}$.
\end{enumerate}

\end{definition}

\begin{definition}
Let $\mathcal{F}\subset P(\theta)$. We say $\mathcal{F}$ admits a \emph{$\theta$-upward uniformly definable well-order} $<_\theta$ if 
\begin{enumerate}
    \item there exist a formula $\varphi(x_0, x_1, y)$ and a parameter $a\subset \theta$ such that for $A,B\in \mathcal{F}$, $A<_\theta B$ iff $H(\theta^+)\models \varphi(A,B, a)$;
    \item $<_\theta$ is a well-order;
    \item for any $\theta$-model $N$ containing $a$, $(<_\theta)^N \subset <_\theta$. More precisely, for any $A,B\in N\cap \mathcal{F}$, if $N\models ``H(\theta^+)\models \varphi(A,B, a)"$, then $H(\theta^+)\models \varphi(A,B, a)"$. 
\end{enumerate}
\end{definition}

\begin{theorem}\label{theorem:WCFullDiamond}
Suppose $\theta$ is weakly compact. Let $\mathcal{F}\subset P(\theta)$ be a collection closed under restrictions and branching admitting a $\theta$-upward uniformly definable well-order $<_\theta$. Then $\diamondsuit(\theta)$ holds for $\mathcal{F}$. Namely, there exists $\langle S_\gamma: \gamma<\theta\rangle$ such that for any $A\in \mathcal{F}$, there exists stationarily many $\gamma<\theta$ such that $A\cap \gamma=S_\gamma$.
\end{theorem}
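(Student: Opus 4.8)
The plan is to build the $\diamondsuit(\theta)$-sequence for $\mathcal{F}$ by a recursive ``minimality'' construction that closely mirrors the proof of Theorem \ref{theorem:weakcompactdiamond}, replacing the $<_{J_\gamma}$-comparison of functions with the given well-order $<_\theta$ on $\mathcal{F}$. Concretely, I would recursively define $\langle S_\gamma : \gamma \le \theta\rangle$ so that at each limit stage $\gamma$, $S_\gamma$ is chosen to be the $<_\gamma$-least element $A \in \mathcal{F}\cap P(\gamma)$ (using the local version of the well-order at $\gamma$, available by uniform definability) such that $A$ is \emph{not guessed} by the sequence constructed below $\gamma$, i.e.\ such that the set $\{\eta < \gamma : A \cap \eta = S_\eta\}$ is nonstationary; if no such $A$ exists, set $S_\gamma$ to be arbitrary (say $\emptyset$). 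The heart of the matter is to run this same recursion at the top level to produce a candidate $S_\theta \in \mathcal{F}$ and derive a contradiction.

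First I would assume toward a contradiction that $\langle S_\gamma : \gamma < \theta\rangle$ fails to be a $\diamondsuit(\theta)$-sequence for $\mathcal{F}$, so there is some $A \in \mathcal{F}$ that is guessed only nonstationarily. The recursion then guarantees that $S_\theta$ is well-defined (not chosen arbitrarily): the $<_\theta$-least badly-guessed element exists, and since $<_\theta$ is a genuine well-order this least element is unique. Next I would fix a transitive $\theta$-model $M$ containing $\langle S_\gamma : \gamma < \theta\rangle$, $S_\theta$, the parameter $a$, and enough of $\mathcal{F}$, and use weak compactness to obtain $j : M \to N$ with critical point $\theta$. The point of the argument is to feed $S_\theta$ through $j$: since $\mathcal{F}$ is closed under restrictions, $S_\theta \cap \gamma \in \mathcal{F}$ for all $\gamma$, and closure under branching lets us recognize $S_\theta$ itself as a legitimate element of $(\mathcal{F})^N$ that $j(\langle S_\gamma\rangle)$ must reckon with at coordinate $\theta$.

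The key computation is that $j(\langle S_\gamma : \gamma < \theta\rangle)(\theta)$, the value the image sequence assigns at $\theta$, must by elementarity be the $<_\theta$-least (as computed in $N$) element of $(\mathcal{F})^N \cap P(\theta)$ that is nonstationarily guessed by the restriction of the sequence below $\theta$ --- and that restriction is exactly $\langle S_\gamma : \gamma < \theta\rangle$, since $\mathrm{crit}(j) = \theta$. Now I would argue that $S_\theta$ is itself such a badly-guessed element in $N$: the set $\{\eta < \theta : S_\theta \cap \eta = S_\eta\}$ is nonstationary in $V$, hence (using that $N$ is a $\theta$-model so clubs of $\theta$ are correctly computed, exactly as in the absoluteness clause of the definitions) also nonstationary in $N$. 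Thus in $N$ the value at $\theta$ is $<_\theta$-below-or-equal-to $S_\theta$; by the $\theta$-upward definability of $<_\theta$, the inequality $j(\langle S_\gamma\rangle)(\theta) <_\theta S_\theta$ (or equality) transfers upward to $V$. But $S_\theta$ was chosen as the $<_\theta$-\emph{least} badly-guessed element of $\mathcal{F}$ in $V$, and $j(\langle S_\gamma\rangle)(\theta)\cap\gamma = S_\gamma$ for all $\gamma<\theta$ forces $j(\langle S_\gamma\rangle)(\theta) = S_\theta$ by branching-closure and the coherence of the sequence, yielding the contradiction.

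The main obstacle I anticipate is the bookkeeping around which object plays the role of the ``minimal counterexample'' and the verification that the two absoluteness transfers go in compatible directions: we need the nonstationarity of the guessing set to transfer from $V$ to $N$ (this is the ``clubs are correctly computed by $\theta$-models'' fact, used to show $S_\theta$ counts as badly-guessed inside $N$), while simultaneously the well-order comparison must transfer from $N$ up to $V$ (this is clause (3) of the definition of a $\theta$-upward uniformly definable well-order). Making sure $S_\theta$ is genuinely identified with $j(\langle S_\gamma\rangle)(\theta)$ --- rather than merely $\le_\theta$-comparable to it --- is where closure under restrictions and under branching must be combined carefully, and this identification is the crux that turns the minimality of $S_\theta$ into an outright contradiction.
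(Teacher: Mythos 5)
Your overall architecture is the right one and matches the paper's proof: recursively pick $S_\gamma$ to be a locally minimal badly-guessed member of $\mathcal{F}\cap P(\gamma)$ with respect to the relation defined by $\varphi$ over $H(\gamma^+)$, take the $<_\theta$-least counterexample in $V$, reflect with a weakly compact embedding $j:M\to N$, use closure under restrictions/branching to see the relevant sets lie in $\mathcal{F}$ (and in $j(\mathcal{F}\cap V_\theta)$), and use clause (3) to transfer the $N$-comparison up to $V$. However, your endgame contains a step that is simply false. Writing $T:=j(\langle S_\gamma:\gamma<\theta\rangle)(\theta)$, you claim that ``$T\cap\gamma=S_\gamma$ for all $\gamma<\theta$'' by ``coherence of the sequence.'' Nothing in the construction makes $\langle S_\gamma:\gamma<\theta\rangle$ coherent; in fact, by elementarity $T$ satisfies clause (1) of the recursion inside $N$, i.e.\ $T\cap\gamma\neq S_\gamma$ for \emph{club-many} $\gamma<\theta$ --- the exact opposite. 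Worse, the equality $T=S_\theta$ you try to extract is the opposite of what the argument needs: the contradiction with $<_\theta$-minimality requires showing that $T$ is a badly-guessed member of $\mathcal{F}$ in $V$ (via branching, plus the fact that the club of $N$ witnessing clause (1) is a true club) \emph{and} that $T\neq S_\theta$, so that the transferred comparison is strict, $T<_\theta S_\theta$, against the least choice of $S_\theta$. The paper obtains this inequality by putting the club $C$ witnessing that the least counterexample is badly guessed \emph{into} $M$: then $\theta\in j(C)$, and elementarity applied at $\theta$ to the statement ``for all $\gamma\in C$, $S_\theta\cap\gamma\neq S_\gamma$'' gives $S_\theta=j(S_\theta)\cap\theta\neq T$.

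Relatedly, your absoluteness transfer for nonstationarity goes the wrong way. The fact that clubs of a $\theta$-model are true clubs gives \emph{upward} transfer: nonstationary in $N$ implies nonstationary in $V$ (this is what shows $T$ is genuinely badly guessed in $V$). What you need in order to see that your $S_\theta$ is a legitimate badly-guessed competitor at stage $\theta$ of the recursion as computed in $N$ is the \emph{converse}, which is not automatic: the $V$-club disjoint from $\{\eta<\theta: S_\theta\cap\eta=S_\eta\}$ need not be an element of $N$, and $N$ may well believe that set is stationary. The repair is the same as above and is exactly what the paper does: include that witnessing club $C$ among the objects of $M$ before invoking weak compactness, so that $C=j(C)\cap\theta\in N$ witnesses clause (1) for $S_\theta$ inside $N$. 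With $C\in M$ both of your gaps close simultaneously ($S_\theta$ counts as badly guessed in $N$, and $T\neq S_\theta$), and the minimality of $T$ in $N$, clause (3), and the $<_\theta$-least choice of $S_\theta$ then yield the paper's contradiction $T<_\theta S_\theta\leq_\theta T$.
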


\begin{proof}
Let $\varphi(x_0,x_1, a)$ be the $\theta$-upward uniform definition for $<_\theta$. Suppose for the sake of contradiction that there does not exist a $\diamondsuit(\theta)$-sequence for sets in $\mathcal{F}$. Define a sequence $\langle S_\gamma\subset \gamma: \gamma<\theta\rangle$ recursively as follows: at stage $\alpha$, if there exists $X\in \mathcal{F}\cap P(\alpha)$ such that 
\begin{enumerate}
\item there exists a club $c\subset \alpha$ such that for all $i\in c$, $X\cap i\neq S_i$ and 
\item there is no $Y\in \mathcal{F}\cap P(\alpha)$ with $H(\alpha^+)\models \varphi(Y,X, a\cap \alpha)$ satisfying the property above,
\end{enumerate}
then define $S_\alpha$ as one such $X$. Otherwise, just let $S_\alpha=\emptyset$. By the assumption, $\langle S_\gamma: \gamma<\theta\rangle$ as defined is not a $\diamondsuit(\theta)$-sequence for $\mathcal{F}$, as a result, there exists $X\in \mathcal{F}$ and a club $C\subset \theta$ such that for all $\gamma\in C$, $X\cap \gamma\neq S_\gamma$. Since $<_\theta$ is a well order, we may pick such $X$ such that it is $<_\theta$-least.

By the fact that $\theta$ is weakly compact, we may find $\theta$-models $M, N$ with $M$ containing $X, \langle S_\gamma: \gamma<\theta\rangle, C, \mathcal{F}'=_{def} \mathcal{F}\cap V_\theta$ and an elementary embedding $j: M\to N$ with critical point $\theta$. Let $S_\theta=j(\vec{S})(\theta)$. Since $\theta\in j(C)$, we have that $j(X)\cap \theta =X \neq S_\theta$. Note that $X\in j(\mathcal{F}')$ by the fact that every proper initial segment of $X$ is in $\mathcal{F}'$ and the elementarity of $j$, we observe that $X$ satisfies (1) with respect to $\langle S_\gamma: \gamma<\theta\rangle$ as witnessed by the club $C\subset \theta$. By the choice of $S_\theta$ and the fact that $S_\theta\neq X$, we know that $N\models`` H(\theta^+)\models \varphi(S_\theta, X, a)$''. Since $(<_\theta)^N\subset <_\theta$ by the $\theta$-upward absoluteness assumption and that $S_\theta\in \mathcal{F}$ since $\mathcal{F}$ is closed under branching, we have that $S_\theta<_\theta X$ in $V$, contradicting with the $<_\theta$-minimality of $X$.
\end{proof}

\section{Combinatorics in the Radin forcing extensions}\label{Section: Radin}
We follow \cite[Section 2]{JingOmerRadin} for the definitions and notations regarding Radin forcing. Let $\bar{U}$ be a measure sequence on $\theta$ derived from an embedding $j: V\to M$. To be consistent with notations of this paper, we note two notation differences from \cite{JingOmerRadin}.
\begin{enumerate}
\item  We use $\theta$ to denote the top cardinal that measure sequences are defined on. Namely, the measure sequences are derived from the elementary embedding $j: V\to M$ with critical point $\theta$. For a measure sequence $\bar{w}$, we then use $\theta(\bar{w})$ to denote the critical point associated with $\bar{w}$. In \cite{JingOmerRadin}, we used $\kappa$.
\item $p\geq q$ means $p$ is a stronger condition. In \cite{JingOmerRadin}, $p\leq q$ means $p$ is stronger.
\end{enumerate}

\subsection{The failure of $\diamondsuit_{\vec{J}, \vec{\lambda}}$} In this subsection, we provide an extension of Woodin's theorem that in suitable Radin extensions, $\diamondsuit(\theta)$ fails at an inaccessible $\theta$ and show that in these models, actually the weaker $\diamondsuit_{\vec{J},\vec{\lambda}}(\theta)$ fails for certain $\vec{J}$ and $\vec{\lambda}$. Then we demonstrate that in any Radin extension, the hypotheses of Theorem \ref{theorem:WCFullDiamond} holds.

For each $\alpha<\theta$, fix an enumeration $\vec{x}^\alpha=\langle x^\alpha_j: j<2^\alpha\rangle$. Let $\vec{x}^\theta = j(\langle \vec{x}^\alpha: \alpha<\theta \rangle)(\theta)$. For any $X\subset \theta$ and $\alpha\leq \theta$, let $index_{\alpha}(X)$ be the unique $j$ such that $X\cap \alpha=x^\alpha_j$. For each $X\subset \theta$ and $X\in V$, let $f_X: \theta\to \theta$ mapping $\alpha$ to $index_\alpha(X)$.

\begin{theorem}\label{Thm:ApproxDiaomondFailsInRadin}
Suppose $lh(\bar{U})<2^\theta$ and $lh(\bar{U})\in \mathrm{cof}(>\theta)$, then in $V^{R_{\bar{U}}}$, $\diamondsuit_{\vec{\mathrm{NS}}, \vec{\lambda}}(\theta)$ fails for some $\vec{\lambda}$. 
\end{theorem}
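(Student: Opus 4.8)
The plan is to take $\vec{\lambda}=\vec{\beth}$, so that $\lambda_\alpha=2^\alpha$ and every $X\subseteq\theta$ gives a function $f_X\in\Pi_{\alpha<\theta}\lambda_\alpha$ with $f_X(\alpha)=index_\alpha(X)$. The computation underlying everything is that, by the choice $\vec{x}^\theta=j(\langle\vec{x}^\alpha\rangle)(\theta)$ together with $\mathrm{crit}(j)=\theta$ (so $j(X)\cap\theta=X$), one has $j(f_X)(\theta)=index_\theta(X)$; thus $f_X$ represents $index_\theta(X)$ in the ultrapower by the normal measure $U$ of $\bar U$. I would argue by contradiction: assume $\langle g_\gamma:\gamma<\theta\rangle\in V^{R_{\bar U}}$ is a $\diamondsuit_{\vec{\mathrm{NS}},\vec\lambda}(\theta)$-sequence, and produce a single $f\in\Pi_{\alpha<\theta}\lambda_\alpha$ for which $\{\gamma:g_\gamma\not<_{J_\gamma}f\restriction\gamma\}$ is nonstationary, contradicting the defining property of the sequence.

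The first step is a reflection reduction. Writing $g_\theta=j(\langle g_\gamma\rangle)(\theta)$, \L o\'s's theorem gives, for a function $f$, that $\{\gamma:g_\gamma<_{J_\gamma}f\restriction\gamma\}$ is $U$-large exactly when $g_\theta<_{\mathrm{NS}_\theta}f$ holds in $M$ (here $j(f)\restriction\theta=f$ and, since $cf(\theta)>\omega$, $j(\vec J)(\theta)=\mathrm{NS}_\theta^M$). Since $U$-large ground model sets contain a tail of the Radin generic club, and a tail of that club is club in $\theta$, it suffices to find $f$ with $g_\theta<^* f$, that is $g_\theta(\alpha)<f(\alpha)$ for club-many $\alpha<\theta$. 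Making this precise requires handling the fact that $\langle g_\gamma\rangle$ lives in the extension rather than in $V$; I would do this either by lifting $j$ to $V^{R_{\bar U}}$ — which is where $cf(lh(\bar U))>\theta$ enters, guaranteeing that $\theta$ stays measurable and that the normal measure lifts — or by running the argument on a name for $\langle g_\gamma\rangle$ and invoking the Prikry property of Radin forcing to reflect the relevant $U$-large set down to a tail of the generic club.

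The second step is to realize a dominating $f$ as some $f_X$. Given the target $g_\theta\in\Pi_{\alpha<\theta}\lambda_\alpha$, I would build $X\subseteq\theta$ recursively along a club $\langle\alpha_\nu:\nu<\theta\rangle$ so that $index_{\alpha_{\nu+1}}(X)>g_\theta(\alpha_{\nu+1})$: having fixed $X\cap\alpha_\nu$, its $2^{\alpha_{\nu+1}}$ extensions to subsets of $\alpha_{\nu+1}$ receive $2^{\alpha_{\nu+1}}=\lambda_{\alpha_{\nu+1}}$ distinct indices in $\vec{x}^{\alpha_{\nu+1}}$, hence unboundedly many below $\lambda_{\alpha_{\nu+1}}$, so one of them has index exceeding $g_\theta(\alpha_{\nu+1})$. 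Choosing that extension on $[\alpha_\nu,\alpha_{\nu+1})$ and taking unions at limits produces $X$ with $f_X>^* g_\theta$. Here $lh(\bar U)<2^\theta$ is what leaves room for such high-index sets and, read through $j(f_X)(\theta)=index_\theta(X)$, lets one take $X$ with $index_\theta(X)\ge lh(\bar U)$, beyond every index the measure sequence can reflect.

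The main obstacle, I expect, is exactly the interface between the two steps: the candidate sequence is an object of the generic extension while the domination and reflection machinery is most naturally phrased in $V$ through $j$ and $U$. Converting ``$g_\theta<^* f$ in $M$'' into ``the bad set is nonstationary in $V^{R_{\bar U}}$'' is the delicate part, and it is here that both hypotheses do genuine work — $cf(lh(\bar U))>\theta$ to secure a usable normal measure (and to bound, on the size-$\theta$ generic club, the local measure-sequence lengths strictly below $lh(\bar U)$), and $lh(\bar U)<2^\theta$ to guarantee sets of sufficiently large index. A secondary subtlety is the bookkeeping in the construction of $X$: the naive recursion only secures domination at the successor points $\alpha_{\nu+1}$, so some care (interleaving, or arranging the construction so that the failure set avoids a club) is needed to make $\{\alpha:g_\theta(\alpha)\ge f_X(\alpha)\}$ genuinely nonstationary rather than merely having cofinal complement.
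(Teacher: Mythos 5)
There are two genuine gaps here, and each one is fatal to the route you describe. The first is in your reduction step. Your plan rests on having a normal measure on $\theta$ in $V^{R_{\bar{U}}}$ --- either by lifting $j$, or by treating ``the normal measure $U$ of $\bar{U}$'' as a single measure whose large sets catch a tail of $C_G$. But under the hypotheses $lh(\bar{U})<2^\theta$ and $lh(\bar{U})\in\mathrm{cof}(>\theta)$, the cardinal $\theta$ cannot be measurable, or even weakly compact, in $V^{R_{\bar{U}}}$: by Theorem \ref{theorem:weakcompactdiamond}, weak compactness of $\theta$ would make $\diamondsuit_{\vec{\mathrm{NS}},\vec{\lambda}}(\theta)$ hold for \emph{every} $\vec{\lambda}$, which is the negation of the statement being proved. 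So an argument whose first move secures a normal measure on $\theta$ in the extension can never establish this theorem; the cofinality hypothesis only keeps $\theta$ regular, it does not give measurability (that is tied to repeat points on $\bar{U}$, a much stronger requirement than anything available at length $<2^\theta$). A second, related, problem: $\bar{U}$ is a sequence of $lh(\bar{U})$-many measures, and a set that is large for a single $U(\eta)$ need \emph{not} contain a tail of $C_G$; only sets in $\bigcap\bar{U}$ are caught by density. The paper's proof is organized precisely around this point: it reduces the candidate sequence to names $\dot{h}_{\bar{U}\restriction\eta}$ for every $\eta<lh(\bar{U})$, dominates each one by some $f_{x^\theta_{i_\eta}}$ (Claim \ref{claim: cofinal} plus the $\theta^+$-c.c.), and then needs a \emph{single} index $i=\sup_{\eta<lh(\bar{U})}i_\eta+1$ working for all measures simultaneously; this works only because the dominating chain has length the regular cardinal $lh(\bar{U})^+>lh(\bar{U})$. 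That is also exactly why the paper does not take $\vec{\lambda}=\vec{\beth}$, but rather $\lambda_i=g(C_G(i))$ with $j(g)(\theta)=lh(\bar{U})^+$: for the full product $\Pi_{\alpha\in C_G}2^\alpha$ the analogous sup would require $\mathrm{cf}\bigl((2^\theta)^M\bigr)>lh(\bar{U})$, which nothing grants. (The claim that $\diamondsuit_{\vec{\beth}}(\theta)$ fails in Woodin's model holds only because there $\vec{\beth}$ agrees with this $\vec{\lambda}$ on a club.)

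The second gap is your step 2, and it is not the ``secondary subtlety'' you call it. In your recursion, at each limit stage $\delta$ the set $X\cap\alpha_\delta=\bigcup_{\nu<\delta}(X\cap\alpha_\nu)$ is already determined, so $index_{\alpha_\delta}(X)$ is whatever the fixed enumeration $\vec{x}^{\alpha_\delta}$ happens to assign it; you have no control at such points. Hence the set on which you can guarantee $g_\theta(\alpha)<f_X(\alpha)$ lies inside $\{\alpha_{\nu+1}:\nu<\theta\}$, which is disjoint from the club of limit points of your own construction and is therefore \emph{nonstationary} --- the opposite of the club domination needed to refute the diamond. No interleaving can repair this: along any club of construction stages, the limit stages are frozen, so a recursion of this shape can never control indices on a club. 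The paper never attempts such an in-model diagonalization; there, domination is a genericity phenomenon. Claim \ref{claim: cofinal} shows, by a density and chain-condition argument, that the ground model functions $f_{x^\theta_k}\restriction C_G$, $k<lh(\bar{U})^+$, are $<^*$-cofinal in the generic product $\Pi_{\alpha\in C_G}g(\alpha)$ (Claim \ref{claim: increasing} giving the ordering), and the final density argument converts domination of the names $\dot{h}_{\bar{U}\restriction\eta}$ into domination of $h_\gamma$ for a tail of $\gamma\in\lim C_G$, which is a club since $\theta$ stays regular. Both of the places where you expected the hypotheses $lh(\bar{U})<2^\theta$ and $\mathrm{cf}(lh(\bar{U}))>\theta$ to act are thus different from where they actually act: the former provides the $lh(\bar{U})^+$-many ground model sets $x^\theta_k$ indexing the cofinal chain, and the latter keeps $\theta$ regular so that tails of $C_G$ are clubs.
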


\begin{proof}
Let $G\subset R_{\bar{U}}$ be generic over $V$. Let $C_G=\{C_G(i): i<\theta\}$ is an increasing enumeration of the generic club.
Let $g: \theta \to \Ord$ be such that $j(g)(\theta)=lh(\bar{U})^+\leq 2^\theta$. We may assume that $g(\alpha)\leq 2^\alpha$ for all $\alpha<\theta$. Then $\vec{\lambda}$ will be chosen such that $\lambda_i=g(C_G(i))$.

\begin{claim}\label{claim: increasing}
For $\eta\leq lh(\bar{U})$, in $V^{R_{\bar{U}\restriction \eta}}$, for any $\alpha_0<\alpha_1<lh(\bar{U})^{+}$, $f_{x^\theta_{\alpha_0}}\restriction C_{G_\eta} <^* f_{x^\theta_{\alpha_1}}\restriction C_{G_\eta}$.
\end{claim}
\begin{proof}
Note that $\{\bar{w}: index_{\theta(\bar{w})}(x^\theta_{\alpha_0})<index_{\theta(\bar{w})}(x^\theta_{\alpha_1})\}\in \bigcap \bar{U}$, since $index_\theta(j(x^\theta_{\alpha_k}))=\alpha_{k}$ for $k<2$. As a result, the conclusion follows by genericity.
\end{proof}

\begin{claim}\label{claim: cofinal}
For $\eta\leq lh(\bar{U})$, in $V^{R_{\bar{U}\restriction \eta}}$, $\langle f_{x^\theta_k}\restriction C_{G_\eta}: k<lh(\bar{U})^{+}\rangle$ is cofinal in $(\Pi_{\alpha\in C_{G_\eta}} g(\alpha), <^*)$.
\end{claim}

\begin{proof}
Given $p=p_0\fr (\bar{U}, A_p)\in R_{\bar{U}\restriction \eta}$ and $\dot{t}\in \Pi_{\alpha\in C_{G_\eta}} g(\alpha)$, we may directly extend $p$ if necessary and assume that for any $\bar{w}\in A_p$, there exists a $R_{\bar{w}}$-name $h(\bar{w})$ such that $p\fr \bar{w}\Vdash \dot{t}(\theta(\bar{w}))=h({\bar{w}})$. For any $\eta'<\eta$, $j(h)(\bar{U}\restriction \eta')$ is an $R_{\bar{U}\restriction \eta'}$-name for an ordinal in $j(g)(\theta)=lh(\bar{U})^+$. By the fact that $R_{\bar{U}\restriction \eta'}$ is $\theta^+$-c.c for any $\eta'<\eta$, we can find some $k<lh(\bar{U})^+$ large enough such that for any $\eta'<\eta$, $\Vdash_{R_{\bar{U}\restriction \eta'}} j(h)(\bar{U}\restriction \eta')<k=j(f_{x^\theta_k})(\theta)$. As a result, $\{\bar{w}\in A_p: p\fr \bar{w}\Vdash \dot{t}(\theta(\bar{w}))<f_{x^\theta_k}(\theta(\bar{w}))\}\in \bigcap\bar{U}\restriction \eta$. The conclusion follows from genericity.
\end{proof}

Suppose we are given $\langle h_\gamma\in \Pi_{\alpha\in C_G\cap \gamma} g(\alpha): \gamma\in \lim C_G\rangle$, we need to find some $f\in \Pi_{\alpha\in C_G} g(\alpha)$ such that on a tail $\gamma\in \lim C_G$, $h_\gamma <^* f\restriction \gamma$. 

We proceed with the density argument. Given $p=p_0\fr (\bar{U}, A)\in R_{\bar{U}}$, by taking a direct extension if necessary, we may assume that for any $\bar{w}\in A$, $p\fr \bar{w}\Vdash \dot{h}_{\theta(\bar{w})}=\dot{h}_{\bar{w}}$, where $\dot{h}_{\bar{w}}$ is a $R_{\bar{w}}$-name. As a result, for any $\eta < lh(\bar{U})$, there is an $R_{\bar{U}\restriction \eta}$-name $\dot{h}_{\bar{U}\restriction \eta}$ such that $j(p)\fr \bar{U}\restriction \eta$ forces that $j(\dot{h})_\theta = \dot{h}_{\bar{U}\restriction \eta}$. By Claim \ref{claim: cofinal} and the $\theta^{+}$-c.c of $R_{\bar{U}\restriction \eta}$, there exists $i_\eta<lh(\bar{U})^{+}$ such that $\Vdash_{R_{\bar{U}\restriction \eta}} \dot{h}_{\bar{U}\restriction \eta}<^* f_{x^\theta_{i_{\eta}}}\restriction C_{G_\eta}$. Let $i=\sup_{\eta<lh(\bar{U})} i_\eta+1$, by Claim \ref{claim: increasing} we know that $j(p)\fr \bar{U}\restriction \eta \Vdash_{j(\vec{R}_{\bar{U}})} j(\dot{h})_\theta<^* f_{x^\theta_i}\restriction C_{G_\eta}$.
As a result, $A'=\{\bar{w}\in A: p\fr \bar{w}\Vdash \dot{h}_{\theta(\bar{w})}<^* f_{x^\theta_i}\restriction C_G\cap \theta(\bar{w})\}\in \bigcap \bar{U}$.
Note that $j(f_{x^\theta_i})\restriction \theta = f_{x^\theta_i}$ since $j(x^\theta_i)\cap \theta=x^\theta_i$. As a result, the extension $p_0\fr (\bar{U}, A')$ of $p$ forces the desired conclusion. \end{proof}

\begin{proposition}\label{proposition: RadinDefinableWO}
Suppose $lh(\bar{U})\in \mathrm{cof}(>\theta)$. Then in $V^{R_{\bar{U}}}$, $\mathcal{F}=_{\mathrm{def}} (P(\theta))^V$
\begin{enumerate}
    \item is closed under restrictions and branching and
    \item admits a $\theta$-upward uniformly definable well-order $<_\theta$.
\end{enumerate}

\end{proposition}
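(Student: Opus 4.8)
The plan is to verify the two clauses separately, reducing both to the \emph{index functions} $f_X$ attached to the ground-model enumerations $\vec{x}=\langle \vec{x}^\alpha:\alpha<\theta\rangle$. Since $\theta$ is inaccessible, each $\vec{x}^\alpha$ lies in $V_\theta$, so the whole sequence $\vec{x}$ can be coded by a single parameter $a\subseteq\theta$; together with the generic club $C_G\subseteq\theta$ this will be the parameter witnessing uniform definability. (That $\theta$ remains regular, so that $C_G$ has order type $\theta$ and $<^*$ is meaningful, is where the hypothesis $lh(\bar U)\in\mathrm{cof}(>\theta)$ enters.) I will use throughout that for $X\in\mathcal F=(P(\theta))^V$ one has $f_X\in V$, that $X\mapsto f_X$ is injective with inverse $f\mapsto\bigcup_{\alpha<\theta}x^\alpha_{f(\alpha)}$, and that $X\in\mathcal F$ is equivalent to $X\cap\alpha\in V$ for all $\alpha<\theta$. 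Closure under restrictions is immediate, since $V$ is closed under $X\mapsto X\cap\alpha$.

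For closure under branching, suppose $X\subseteq\theta$ in $V[G]$ has $X\cap\alpha\in V$ for all $\alpha<\theta$; equivalently, the index function $f_X\in{}^\theta\theta\cap V[G]$ is total and $\bigcup_\alpha x^\alpha_{f_X(\alpha)}=X$ is coherent. I must show $X\in V$, i.e.\ that $R_{\bar U}$ adds no \emph{fresh} subset of $\theta$. I would derive this from the standard factorization of Radin forcing: below a condition reaching any given club point, $R_{\bar U}$ splits as a quotient of size $<\theta$ followed by a tail whose direct-extension order is highly closed, and the Prikry property together with the $\theta$-completeness of the measures in $\bar U$ allows one to decide, by a single direct extension, the value of $\dot X\cap\alpha$ as a fixed ground-model set uniformly in $\alpha$; coherence then assembles these values into an element of $V$. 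This is precisely the fresh-subset lemma for Radin forcing, and I expect it to be the main obstacle, as it is the only point where the fine structure of $R_{\bar U}$ (rather than bookkeeping with $\vec{x}$) is needed.

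For the well-order, I would define, for $A,B\in\mathcal F$,
\[
A<_\theta B \iff f_A\restriction C_G <^* f_B\restriction C_G,
\]
a relation visibly definable over $H(\theta^+)^{V[G]}$ from $a$ (coding $\vec{x}$ and $C_G$). The key computation, parallel to Claim \ref{claim: increasing}, is that for distinct $A,B\in(P(\theta))^V$,
\[
\{\bar w: index_{\theta(\bar w)}(A)<index_{\theta(\bar w)}(B)\}\in\textstyle\bigcap\bar U \iff index_\theta(A)<index_\theta(B),
\]
where $index_\theta$ is taken with respect to $\vec{x}^\theta=j(\vec{x})(\theta)$; here I use that $M$ is closed enough that $P(\theta)^V=P(\theta)^M$, so that $\vec{x}^\theta$ enumerates all of $\mathcal F$ and $index_\theta$ injects $\mathcal F$ into an ordinal. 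By genericity this membership is equivalent to $f_A\restriction C_G<^* f_B\restriction C_G$, so $<_\theta$ coincides on $\mathcal F$ with the pullback under $index_\theta$ of the ordinal ordering. Hence $<_\theta$ is linear (trichotomy transfers because for $A\neq B$ the set $\{\alpha:A\cap\alpha=B\cap\alpha\}$ is bounded, so exactly one of the two index-sets lies in $\bigcap\bar U$) and well-founded (a $<_\theta$-descending chain would yield a descending sequence of ordinals $index_\theta(\cdot)$, using the equivalence pairwise).

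Finally, for $\theta$-upward absoluteness, let $N$ be a $\theta$-model containing $a$ and $A,B\in N\cap\mathcal F$ with $N\models``H(\theta^+)\models A<_\theta B"$. Then $N$ contains a club $c\subseteq\theta$ along which $f_A<f_B$ on $C_G$; since $N$ is a $\theta$-model, $c$ is genuinely closed unbounded in $\theta$ (exactly as in the $\vec{\mathrm{NS}}$ example), and the computation of $f_A,f_B$ from $A,B,\vec{x}\in N$ is absolute, so $A<_\theta B$ holds in $V[G]$. This gives $(<_\theta)^N\subseteq{<_\theta}$ and completes clause (2); only the fresh-subset lemma of the second paragraph requires genuine forcing analysis, the remaining steps being genericity arguments in the spirit of Claims \ref{claim: increasing} and \ref{claim: cofinal}.
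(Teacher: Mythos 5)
Your proposal is correct and follows essentially the same route as the paper: the well-order is the pullback of the ordinal order under $X\mapsto index_\theta(X)$, implemented via the index functions $f_X\restriction C_G$, with linearity and well-foundedness coming from the genericity argument of Claim \ref{claim: increasing}, upward absoluteness from the absoluteness of club/tail statements about $C_G$ for $\theta$-models containing $C_G$ and $\vec{x}$, and closure under branching reduced, exactly as in the paper, to the Cummings--Woodin theorem that $R_{\bar{U}}$ adds no fresh subsets of $\theta$. Your explicit observations that one needs $P(\theta)^M=P(\theta)^V$ for $index_\theta$ to be total on $\mathcal{F}$, and that trichotomy uses the boundedness of $\{\alpha: A\cap\alpha=B\cap\alpha\}$ for $A\neq B$, usefully make explicit points the paper leaves implicit.

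One caveat, which is not a structural gap but should not stand as written: the paper does not prove the fresh-subset lemma, it cites it, and your sketch of its proof would not work. No direct extension can decide ``the value of $\dot{X}\cap\alpha$ as a fixed ground-model set,'' let alone uniformly in $\alpha$: a direct extension leaves the lower part of the generic sequence below $\alpha$ entirely undetermined, and the evaluation of $\dot{X}\cap\alpha$ in general depends on that lower part (for instance, a name whose value switches between two fixed ground-model sets according to whether the measure sequence of the first generic point has positive length has all its initial segments forced into $V$, yet is decided by no direct extension). The genuine argument must instead reduce $\dot{X}\cap\alpha$, on a measure-one set, to a name for the forcing below $\alpha$ --- in the spirit of what Lemma \ref{lemma:NameReduction} of this paper does for $\po_{\vec{E}}$ --- and then run a coherence/fusion analysis; this is why the lemma is a theorem with content. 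Since both you and the paper ultimately invoke it as a known result, your proof stands, but that step should be presented as a citation of Cummings--Woodin rather than as something your sketch establishes.
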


\begin{proof}
Let $G\subset R_{\bar{U}}$, and work in $V[G]$. Let $\mathcal{F}=V\cap P(\theta)$. Note that $\mathcal{F}$ is closed under restrictions and branching. The latter is due to the theorem by Cummings and Woodin that forcing with $R_{\bar{U}}$ does not add fresh subsets to $\theta$.
For $X,Y\in \mathcal{F}$, $X<_\theta Y$ iff for a tail of $\alpha\in C_G$, $f_X(\alpha)<f_Y(\alpha)$. Let us verify that $<_\theta$ is indeed a $\theta$-upward uniformly definable well-order.
\begin{enumerate}
\item It can be readily checked that $<_\theta$ is definable in $H(\theta^+)$ using parameters $C_G$ and $\vec{x}=\langle x^\alpha_i: i<2^\alpha, \alpha<\theta\rangle$.
\item We check that $<_\theta$ is a well-order. It suffices to check that for any $i_0,i_1<2^\theta$, $x^\theta_{i_0}<_\theta x^\theta_{i_1}$ iff $i_0<i_1$. This follows from Claim \ref{claim: increasing}.
\item Let $N$ be a $\theta$-model containing $C_
G$ and $\vec{x}$. Then for any $X, Y\in N\cap V$, $X<_\theta Y$ iff for a tail $\alpha\in C_G$, $f_X(\alpha)<f_Y(\alpha)$, which is absolute between $N$ and $V[G]$.
\end{enumerate}\end{proof}

\begin{remark}
By Proposition \ref{proposition: RadinDefinableWO}, Theorem \ref{theorem:WCFullDiamond} and a theorem of Magidor asserting that in any Radin forcing extension where $\theta$ remains regular $V$-$\diamondsuit(\theta)$ is equivalent to $\diamondsuit(\theta)$, we can conclude that in any Radin forcing extension, if $\theta$ is weakly compact then $\diamondsuit(\theta)$ holds. This was originally proved in \cite{JingOmerRadin}, where the proof there is different and contains some other information. For example, it contains correspondences between certain large cardinal properties in the Radin model and measure sequences satisfying various properties in the ground model.
\end{remark}

\subsection{Separating $\diamondsuit_{\vec{\mathrm{NS}}, \vec{\beth}}$ from $\diamondsuit$}

In this subsection, we show that without the GCH assumption, the conclusion of Proposition \ref{proposition:gch} may not hold.
\begin{theorem}\label{theorem: SeparatingDiamonds}
Suppose there is a strong cardinal $\theta$ in $V$. Then in some forcing extension of $V$, we have $\diamondsuit_{\vec{\mathrm{NS}}, \vec{\beth}}(Sing^\theta_{>\omega})$ holds but $\diamondsuit(\theta)$ fails.
\end{theorem}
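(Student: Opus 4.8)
The plan is to realize both properties inside a single Radin extension $V^{R_{\bar U}}$, where the measure sequence $\bar U$ is chosen to satisfy $lh(\bar U)<2^\theta$ and $lh(\bar U)\in\mathrm{cof}(>\theta)$, exactly the hypotheses of Theorem \ref{Thm:ApproxDiaomondFailsInRadin}. First I would carry out a ground-model preparation: starting from the strong cardinal $\theta$, force to push $2^\theta$ up to some $\mu>\theta^{+}$ by a $<\theta$-closed, $\theta^{+}$-c.c.\ forcing, while lifting to keep $\theta$ measurable (indeed strong enough to carry a measure sequence of the desired length); the full strength of $\theta$ leaves ample room for the usual GCH-violating lifting. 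Since the preparation is $<\theta$-closed it adds no subsets to ordinals below $\theta$ and preserves the ground-model diamond on $Sing^\theta_{>\omega}$ (available because $\theta$ is strong, hence ineffable, so $\diamondsuit(\theta)$ holds in $V$). One then fixes $\bar U$ with $\theta<lh(\bar U)<2^\theta$ and $cf(lh(\bar U))>\theta$, and forces with $R_{\bar U}$.

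The failure of $\diamondsuit(\theta)$ is Woodin's theorem \cite{CummingsWoodin}, on which Theorem \ref{Thm:ApproxDiaomondFailsInRadin} already rests: blowing up the power sets at the generic club points diagonalizes any candidate diamond sequence. I would additionally arrange the diagonalization so that the witnessing club of non-guessing meets every stationary set, reflecting the failure to $Sing^\theta_{>\omega}$ itself; combined with the next step this makes the example genuinely demonstrate that GCH cannot be dropped from Proposition \ref{proposition:gch}.

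The heart of the argument is the positive statement $\diamondsuit_{\vec{\mathrm{NS}},\vec{\beth}}(Sing^\theta_{>\omega})$. The key structural fact is that, because $lh(\bar U)^{+}<2^\theta$, the cofinal scale $\langle f_{x^\theta_k}\restriction C_G:k<lh(\bar U)^{+}\rangle$ of Claims \ref{claim: increasing} and \ref{claim: cofinal} is cofinal only in the sub-product $\Pi_{\alpha\in C_G}g(\alpha)$ and fails to be cofinal in the full $\vec{\beth}$-product $\Pi_{\alpha\in C_G}2^\alpha$; this is precisely the gap that kills $\diamondsuit_{\vec\lambda}(\theta)$ for the small $\vec\lambda$ of Theorem \ref{Thm:ApproxDiaomondFailsInRadin} yet leaves $\diamondsuit_{\vec\beth}$ room to survive. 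I would exploit the gap to build a guessing sequence $\langle g_\gamma\in\Pi_{i<\gamma}2^i:\gamma\in Sing^\theta_{>\omega}\rangle$ whose values lie above the scale, using the indices $k\ge lh(\bar U)^{+}$ to force large values and the preserved ground-model diamond, via a coding in the spirit of Proposition \ref{proposition:gch} run in reverse, to control which large values are chosen at each $\gamma$. The verification of non-domination is a density argument dual to the one in Theorem \ref{Thm:ApproxDiaomondFailsInRadin}: given a name for $f\in\Pi_{i<\theta}2^i$, one uses the $\theta^{+}$-chain condition of the $R_{\bar U\restriction\eta}$ together with the Cummings--Woodin theorem that $R_{\bar U}$ adds no fresh subsets of $\theta$ (see Proposition \ref{proposition: RadinDefinableWO}) to reduce the behavior of $f$ below each relevant $\gamma$ to ground-model data, whereupon the ground-model diamond guarantees that at stationarily many $\gamma$ the chosen $g_\gamma$ is not dominated by $f\restriction\gamma$ off a nonstationary set.

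The hard part will be the tension between the two requirements inside one model. Blowing up the power set enough to destroy $\diamondsuit(\theta)$ threatens to supply, in the extension, a single function that dominates every prospective $g_\gamma$ (indeed the naive induced sequence from the ground-model diamond has small values and is trivially dominated). The delicate point is to verify that, with $lh(\bar U)^{+}<2^\theta$ held fixed, every new $f$ is, on the coordinates below each singular $\gamma$, captured by the scale-plus-ground-model picture in a manner compatible with the large values assigned to $g_\gamma$, so that non-domination at stationarily many $\gamma$ genuinely survives. Steering the genericity/density machinery of Theorem \ref{Thm:ApproxDiaomondFailsInRadin} toward the opposite conclusion for $\vec\beth$, while keeping the cardinal arithmetic precisely in the window that both negates $\diamondsuit(\theta)$ and preserves the gap, is where essentially all of the work lies.
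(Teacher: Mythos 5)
Your high-level skeleton (prepare the ground model so that $2^\theta$ is large, force with $R_{\bar U}$, quote Woodin for $\neg\diamondsuit(\theta)$, and build the approximating diamond sequence by hand) is the same as the paper's, but the heart of the matter --- the actual construction and verification of a $\diamondsuit_{\vec{\mathrm{NS}},\vec{\beth}}(Sing^\theta_{>\omega})$-sequence --- is missing, and the mechanism you propose in its place cannot work. A ground-model $\diamondsuit(\theta)$-sequence only guesses objects lying in $V$, whereas the functions $f\in\Pi_{i<\theta}2^i$ you must catch are added by the Radin forcing. Your appeal to the Cummings--Woodin ``no fresh subsets'' theorem does not repair this: that theorem says that a subset of $\theta$ \emph{all} of whose initial segments lie in $V$ lies in $V$; it does not say that $f\restriction\gamma$ is ground-model data, and in general $f\restriction\gamma$ lives only in the intermediate extension given by the part of the generic below $\gamma$. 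Likewise, ``Proposition \ref{proposition:gch} run in reverse'' is just Lemma \ref{lemma: diamondimplies}, which needs $\diamondsuit$ \emph{in the extension} --- exactly what fails there by Woodin's theorem. What remains of your positive part is the phrase ``use indices $k\geq lh(\bar U)^+$ to force large values,'' and merely assigning large values cannot by itself produce stationary non-domination against every new $f$.

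The paper's proof shows that the content lies precisely in the cardinal arithmetic you leave unspecified. There one arranges $2^\theta=\lambda$ with $cf(\lambda)=\theta^+$ and takes $lh(\bar U)=\theta^+$ \emph{exactly}; then on a $\bigcap\bar U$-measure one set $cf(2^\alpha)=\alpha^+$, so cofinal maps $l_\alpha:\alpha^+\to 2^\alpha$ exist, and the guess at a club point $\theta(\bar w)$ is defined from $l_{\theta(\bar w)}(lh(\bar w))$, i.e.\ the length of the local measure sequence fed into the local cofinal map. Given a name $\dot g\in\Pi_{\alpha\in C_G}2^\alpha$, the $\theta(\bar w)^+$-c.c.\ of $R_{\bar w}$ bounds $\dot g(\theta(\bar w))$ below $l_{\theta(\bar w)}(m(\bar w))$ for some $m(\bar w)<\theta(\bar w)^+$, and at closure points $\tau$ of $\xi\mapsto j(m)(\bar U\restriction\xi)$ of cofinality in $(\omega,\theta)$ the measure $U(\tau)$ concentrates on points whose guess beats this bound; genericity turns $U(\tau)$-positivity into stationarity. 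The exact matching $cf(2^\theta)=lh(\bar U)=\theta^+$ is what converts the chain-condition bound (an ordinal below $\theta(\bar w)^+$) into a comparison with the guessing index. In your setup ($cf(lh(\bar U))>\theta$, $lh(\bar U)<2^\theta$, cofinality of $2^\theta$ unspecified) this mechanism is unavailable; indeed, if $2^\theta$ came out regular in your prepared model the maps $l_\alpha$ would not even exist --- compare the remark following Proposition \ref{proposition:gch} and Question \ref{question: approxdiamondTOdiamondWithCardArithmetic}, which single out this singularity as the essential and apparently unavoidable ingredient.
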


\begin{proof}
 By some preliminary forcing, we may suppose in $V$, there is a strong cardinal $\theta$ with $2^\theta=\lambda>cf(\lambda)=\theta^+$. Let $\bar{U}$ be a measure sequence of length $\theta^+$ derived from an embedding $j: V\to M$. We show that in $V^{R_{\bar{U}}}$, $\diamondsuit_{\vec{\mathrm{NS}}, \vec{\beth}}(Sing^\theta_{>\omega})$ holds. In fact, we show something stronger. Note that by a theorem of Woodin, $\diamondsuit(\theta)$ fails in this model.

For each $\alpha<\theta$, let $\langle x^\alpha_j: j<2^\alpha\rangle$ enumerate the subsets of $\alpha$. Let $l_\alpha: cf(2^\alpha)\to 2^\alpha$ be a cofinal map. By the hypothesis, we know a $\bigcap\bar{U}$-measure one set of $\alpha$ satisfies $cf(2^\alpha)=\alpha^+$. For each measure sequence $\bar{w}$, define $t_{\bar{w}}$ to be a function on $\MS \cap V_{\theta(\bar{w})}$ such that $\bar{v}$ is mapped to the index of $x^{\theta(\bar{w})}_{l_{\theta(\bar{w})}(lh(\bar{w}))}\cap \theta(\bar{v})$. We argue that in $V^{R_{\bar{U}}}$, the sequence $\langle t_{\bar{w}}\restriction \MS_G: \bar{w}\in G, \omega<cf^{V[G]}(\theta(\bar{w}))<\theta(\bar{w})\rangle$ is a $\diamondsuit_{\vec{\mathrm{NS}}, \vec{\beth}}(Sing^\theta_{>\omega})$-sequence.

Given $p\in R_{\bar{U}}$ and $\dot{g}\in \Pi_{\alpha\in C_G} 2^\alpha$, by directly extending $p$ if necessary, we may assume that for any $\bar{w}\in A_p$,
\begin{itemize}
    \item $cf(2^{\theta(\bar{w})})=\theta(\bar{w})^+$,
    \item $p\fr \bar{w}\Vdash \dot{g}(\theta(\bar{w}))=h(\bar{w})$ for some $R_{\bar{w}}$-name $h(\bar{w})$ for an element in $2^{\theta(\bar{w})}$.
\end{itemize}
 Since $R_{\bar{w}}$ satisfies $\theta(\bar{w})^+$-c.c, we may find a function $m$ with domain $A_p$ such that for each $\bar{w}\in A_p$, $m(\bar{w})\in \theta(\bar{w})^+$ such that $p\fr \bar{w}\Vdash \dot{g}(\theta(\bar{w}))<l_{\theta(\bar{w})}(m(\bar{w}))$. It suffices to prove the following claim.
\begin{claim}
The collection $B=\{\bar{w}: \{\bar{v}\in V_{\theta(\bar{w})}: p\fr \bar{w} \fr \bar{v}\Vdash \dot{g}(\theta(\bar{v}))<t_{\bar{w}}(\bar{v})\}\in \bigcap \bar{w}\}$ is $\bar{U}$-positive.
\end{claim}
\begin{proof}
Let $f: \theta^+\to \theta^+$ be defined such that $f(\xi)=j(m)(\bar{U}\restriction \xi)$. Let $C$ be the collection closure points of $f$. Namely, $\tau\in C$ iff for any $\gamma<\tau$, $f(\gamma)<\tau$. Fix $\tau\in C$ with $cf(\tau)\in (\omega, \theta)$. We want to show that $B\in U(\tau)$. It suffices to show $\{\bar{v}\in V_\theta: j(p)\fr \bar{U}\restriction \tau \fr \bar{v}\Vdash j(\dot{g})(\theta(\bar{v}))<j(t)_{\bar{U}\restriction \tau}(\bar{v})\}\in \bigcap \bar{U}\restriction \tau$. By the definition, it suffices to show that for any $\xi<\tau$, $$j(j(p))\fr j(\bar{U}\restriction \tau)\fr \bar{U}\restriction \xi \Vdash j(j(\dot{g}))(\theta)<j(j(t)_{\bar{U}\restriction \tau})(\bar{U}\restriction \xi).$$
Fix $\xi<\tau$. By elementarity, 
\begin{itemize}
    \item $j(j(p))\fr \bar{U}\restriction \xi\Vdash j(j(\dot{g}))(\theta)<j(j(l))_\theta(j(j(m))(\bar{U}\restriction \xi))=j(l)_\theta(j(m)(\bar{U}\restriction\xi))=j(l)_\theta(f(\xi))$, and
    \item $j(p)\fr \bar{U}\restriction \tau\Vdash \forall \alpha<\theta, j(t)_{\bar{U}\restriction \tau}(\alpha)=index_\alpha(x^\theta_i \cap \alpha)$ where $i=j(l)_\theta(\tau)$.
\end{itemize}
As a result, $j(j(p))\fr j(\bar{U}\restriction \tau)\fr \bar{U}\restriction \xi$ forces that $$j(j(\dot{g}))(\theta)<j(l)_\theta(f(\xi))<j(l)_\theta(\tau)=index_\theta(j(x^\theta_i)\cap \theta)=j(j(t)_{\bar{U}\restriction \tau})(\bar{U}\restriction \xi),$$ as desired (we use the fact that $\tau$ is a closure point for $f$ in the second inequality). 
\end{proof}\end{proof}

\section{Overlapping Extenders Forcing of Inaccessible length}\label{Section: forcing}

We give a brief account of Gitik's overlapping extenders forcing from \cite{GITIK:OverlappingEBF} with several minor changes in terminology and notations, which we adopt for ease of presentation of later sections. 
We follow the Jerusalem forcing convention of \cite{GITIK:OverlappingEBF}, by which a condition $p$ is stronger (more informative) than $q$ is denoted by $p \geq q$. 

For all relevant notations involving measures, extenders, and Prikry-type forcings, we 
refer the reader to \cite{Gitik-HB}.

The forcing $\po_{\vec{E}}$ is based on a  sequence of overlapping extenders $\vec{E} = \la E_\alpha \mid \alpha < \theta\ra$ in $V$ that satisfy several simple conditions. We list these conditions below and refer to a sequence $\vec{E}$ which satisfies them as a \emph{forcing suitable} sequence. 

\begin{definition}(Forcing suitable extender sequence)\\
Let $\vec{E} = \la E_\alpha : \alpha < \theta\ra$ be a sequence of extenders 
whose critical points are $\kappa_\alpha = \cp(E_\alpha)$.
The ultrapower embedding of $E_\alpha$ is denoted by $j_{E_\alpha} : V \to M_{E_\alpha}$. 

We say that $\vec{E}$ is \emph{forcing suitable} if
it has the following properties:
\begin{enumerate}
    \item The sequence of critical points $\la \kappa_\alpha \mid \alpha < \theta\ra$ is strictly increasing and discrete. 
    Namely, for every $\beta < \theta$, $\kappa_\beta > \vec{\kappa}_\beta := \cup_{\alpha < \beta}\kappa_\alpha$.
    
    \item There exists some $\lambda \geq \cup_{\alpha < \theta}\kappa_\alpha$ such that for each $\alpha < \theta$, $E_\alpha$ is a $(\kappa_\alpha,\lambda)$-extender consisting of a directed system of $\kappa_\alpha$ complete measures $E_\alpha = \{ E_\alpha(a), \pi_{a,b} : a \subseteq b \in [\lambda]^{<\omega}\}$
    whose direct limit embedding $j_{E_\alpha} : V \to M_{E_\alpha}$ satisfies $H_\lambda \subseteq M_{E_\alpha}$.

    \item $E_\alpha \mo E_\beta$ for all $\alpha < \beta$. Moreover, there exists a function $e^\beta_\alpha : \kappa_\beta \to V_\beta$ such that $E_\alpha = j_{E_\beta}(e^\beta_\alpha)(\kappa_\beta)$. 
\end{enumerate}
\end{definition}

Fix a sequence of functions $\vec{e} = \la e^\beta_\alpha \mid \alpha < \beta < \theta\ra$ as above. 
It follows that for each $\beta < \theta$, the function $\lambda^\beta : \kappa_\beta \to \kappa_\beta$ which maps $\rho < \kappa_\beta$ to the length of the extender $e^\beta_0(\rho)$ (i.e., $e^\beta_0(\rho)$ is a $(\kappa_0,\lambda^\beta(\rho))$-extender) represents $\lambda = j_{E_\beta}(\lambda^\beta)(\kappa_\beta)$ in $M_{E_\beta}$.

We denote $\lambda$ by $\lambda(\vec{E})$, and for $\beta < \theta$, denote the restricted sequences $\la E_\gamma \mid \gamma \geq \beta\ra$ by $\vec{E}\setminus \beta$, and 
$\la E_\alpha \mid \alpha < \beta\ra$ by $\vec{E}\uhr \beta$.

We start with an auxiliary poset $\po^*_{\vec{E}}$.

\begin{definition}${}$
\begin{enumerate}
    \item The poset $\po^*_E$ associated to a $(\kappa,\lambda)$-extender $E$ is the Cohen-type poset, consisting of partial functions $f : \lambda \to \kappa$ of size $|f| \leq \kappa$, and further satisfy that 
    $f(\kappa) < \kappa$ is an inaccessible cardinal. The order of $\po^*_{E}$ is containment.
    
    We will occasionally write $\add(\kappa^+,\lambda)$ to denote this forcing. 
    
    \item For a sequence of extenders $\vec{E} = \la E_\alpha \mid\alpha < \theta\ra$ the poset $\po^*_{\vec{E}} = \prod_{\alpha < \theta} \po^*_{E_\alpha}$, with the usual full-support product ordering.
    
    It will also be useful to consider the tail-ordering $\leq_{\bd}$ on $\po^*_{\vec{E}}$ by which a sequence $\vec{f} = \la f_\alpha \mid \alpha < \theta\ra$ of $f_\alpha  \in \po^*_{E_\alpha}$ is $\leq_{\bd}$-stronger than a sequence $\vec{g} = \la g_\alpha \mid \alpha < \theta\ra$ if 
    $f_\alpha \geq_{\po^*_{E_\alpha}} g_\alpha$ for all but boundedly many $\alpha <  \theta$.

    \item 

    Let $\vec{f} = \la f_\alpha \mid \alpha < \theta\ra$ be a sequence of functions. For each $\beta < \theta$ for which $\kappa_\beta \in \dom(f_\beta)$, define
    
    $$\rho^{\vec{f}}_\beta = f_\beta(\kappa_\beta),$$
        and 
        $$\lambda^{\vec{f}}_\beta = \lambda^{\beta}(\rho^{\vec{f}}_\beta).$$

    \item 
Let  $\vec{f} = \la f_\alpha \mid \alpha < \theta\ra$ be a sequence of functions and $s \in [\theta]^{<\omega}$ such that $\kappa_\beta \in \dom(f_\beta)$ for each $\beta \in s$. 

Define the sequence of extender $\vec{E}^{\vec{f},s} = \la E^{\vec{f},s}_\alpha \mid \alpha < \theta\ra$ and sequence of lengths $\vec{\lambda}^{\vec{f},s}$ as follows:
     \begin{itemize}
        \item $E^{\vec{f},s}_\alpha = E_\alpha$ and $\lambda^{\vec{f},s}_\alpha = \lambda(\vec{E})$ for every $\alpha \geq \max(s)$, 
        \item for $\alpha < \max(s)$, if  $\beta = \min(s \setminus (\alpha+1))$ then 
        $E^{\vec{f},s}_\alpha = e^\beta_{\alpha}(\rho^{\vec{f}}_\beta)$ and 
        $\lambda^{\vec{f},s} = \lambda_\beta^{\vec{f}}$. 
        
    \end{itemize}

    \item
    We say that a sequence of functions $\vec{f} =\la f_\alpha \mid \alpha < \theta\ra$ is 
    $(\vec{E},s)$-compatible if it satisfies the following requirements:
    \begin{itemize}
        \item $\kappa_\beta \in \dom(f_\beta)$ for every $\beta \in s$, 
        
        \item for every $\alpha < \theta$, 
        $E^{\vec{f},s}_\alpha$ is a $(\kappa_\alpha,\lambda^{\vec{f},s}_\alpha)$-extender, and 
        $f_\alpha \in \po^*_{E^{\vec{f},s}_\alpha}$. 
    \end{itemize}

\end{enumerate}
\end{definition}

To move forward and define the domain of the poset $\po_{\vec{E}}$, one needs to introduce the measures $E_\alpha(f_\alpha)$ for $f_\alpha \in \po^*_{E_\alpha}$. 

\begin{definition}

Let $E$ be a $(\kappa,\lambda)$-extender, with an associate ultrapower embedding $j_E : V \to M_E$. 

\begin{enumerate}
    \item 
    For a subset $d \in [\lambda]^{\leq\kappa}$ with $\kappa \in d$, define:
    \begin{itemize}
        \item $\mc{E}{d}= \{ (j_E(\tau),\tau) \mid \tau \in d\}$, and 
        \item $E(d) = \{ X \subseteq [\lambda \times \kappa]^{<\kappa} \mid \mc{E}{d} \in j_E(X)\}$.
        
        \item The set of ``typical'' $E(d)$ objects, denoted $OB_E(d)$ is the set of all  partial functions $\nu : d \to \kappa$ with the following properties:
        \begin{enumerate}
            \item $\kappa\in \dom(\nu)$, 
            \item $|\nu| = \nu(\kappa)$, 
            \item $\nu$ is order-preserving.
        \end{enumerate}
    \end{itemize}

    \item 
    For a function $f \in \po^*_E$ with $\kappa \in \dom(f)$, define $E(f) = E(\dom(f))$ and $OB_E(f) = OB_E(\dom(f))$. 
\end{enumerate}
\end{definition}

\begin{definition}(The domain of $\po_{\vec{E}}$)\\

As a set, $\po_{\vec{E}} = \biguplus_{s \in [\theta]^{<\omega}} \po^s_{\vec{E}}$ is union of disjoint parts $\po^s_{\vec{E}}$, $s \in [\theta]^{<\omega}$.
For each $s \in [\theta]^{<\omega}$, $\po^s_{\vec{E}}$ consists of sequences $p = \la p_\alpha \mid \alpha < \theta\ra$, for which 
there exists a $(\vec{E},s)$-compatible sequence of functions $\vec{f} = \la f_\alpha \mid \alpha < \theta\ra$, and the following properties hold:
\begin{enumerate}

    \item For each $\alpha \in s$, $p_\alpha = \la f_\alpha\ra$,
    
    \item for each $\alpha \in \theta\setminus s$, 
    $p_\alpha = \la f_\alpha,A_\alpha\ra$ where
    $A_\alpha \in E^{\vec{f},s}_\alpha(f_\alpha)$ is contained in $OB_{E^{\vec{f},s}_\alpha}(f_\alpha)$, and
   \item for any $\alpha<\alpha'\in \theta$ and any $\nu\in A_{\alpha'}$, $\dom(f_\alpha)\subset \dom(\nu)$. In particular, $\langle \dom(f_\alpha): \alpha<\theta\rangle$ is increasing.
\end{enumerate}

For a condition $p \in \po_{\vec{E}}$ as above, we define
\begin{itemize}
    \item The support of $p$, denoted $s(p)$, is the unique $s \in [\theta]^{<\omega}$ such that
    $p \in \po^s_{\vec{E}}$.
    
    \item $\vec{f} = \vec{f}^p$ and denote each $f_\alpha$ by $f^p_\alpha$, and $A_\alpha$ by $A^p_\alpha$. 
    
    \item $\vec{E}^p =\vec{E}^{\vec{f},s}$ and $E^p_\alpha = E^{\vec{f},s}_\alpha$ for each $\alpha <\theta$.
    
    \item $\vec{\lambda}^p = \vec{\lambda}^{\vec{f},s}$ and $\lambda^p_\alpha = \lambda^{\vec{f},s}_\alpha$ for every $\alpha < \theta$. 
    \end{itemize}
\end{definition}

\begin{definition}\label{Def:ProjOperatorOnSets&Funcs}(The projection operator $\pi_\nu^*$ on functions and sets)\\

\begin{enumerate}
    \item 
Let $\nu \in OB_{E'}(f')$ for some extender $E'$ which is a $(\kappa',\lambda)$-extender and $f' \in \po^*_{E'}$ with $\kappa' \in \dom(\nu)$.

We define the projection operator $\pi^*_\nu$ on suitable functions $g$, sets $A$, and extenders $E \mo E'$ with a designated representing function $e$. 

\begin{enumerate}
    \item For a function $g : \lambda \to \kappa$ for some $\kappa$, so that $\dom(g) \subseteq \dom(\nu)$, we define $\pi^*_\nu g$ to be the function $g'$ given by  $\dom(g') = \{ \nu(\tau) \mid \tau \in \dom(g)\}$ and $g'(\nu(\tau)) = g(\tau)$ for every $\tau \in \dom(g)$.
    
    \item For a set $A$ consisting of functions $\mu \in OB_{E}(g)$ for some extender $E$ with critical point $\kappa < \nu(\kappa')$, and a function $g$ with $\dom(g) \subseteq \dom(\nu)$, 
    (which implies that $\dom(\mu) \subseteq \dom(\nu)$ for all $\mu \in A$) we define
    \[ \pi^*_\nu A = \{ \pi^*_\nu \mu \mid \mu \in A\}.\]
    
    \item Suppose that $E \mo E'$, with $E = j_{E'}(e)(\kappa')$. Define
    $\pi^*_{\nu}(E) = e(\nu(\kappa'))$.
    \end{enumerate}

    \item Suppose that $\vec{E} = \la E_\alpha \mid \alpha < \beta\ra$ is a sequence of length $\beta < \kappa'$ with critical points $\kappa_\alpha < \kappa'$, so that $\vec{E} \in M_{E'}$ is represented by a prescribed sequence of functions $\vec{e} =\la e_\alpha \mid \alpha < \beta\ra$. 
    For every $\nu \in OB_{E'}(g)$ for some $g$  such that $\beta < \nu(\kappa')$, define $$\pi^*_\nu(\vec{E}) = \la \pi^*_\nu(E_\alpha) \mid \alpha < \beta\ra = \la e_\alpha(\nu(\kappa')) \mid \alpha < \beta\ra$$
    and 
    $$
    \pi^*_\nu(\po_{\vec{E}}) = \po_{\pi^*_\nu(\vec{E})}.
    $$
    
    \item 
    Let $\vec{E} = \la E_\alpha \mid \alpha < \beta \rangle$ be a sequence of extenders as above, with a prescribed representing sequence of functions $\vec{e} =\la e_\alpha \mid \alpha < \beta\ra$, and
    $p = \la p_\alpha \mid \alpha < \beta\ra \in \po_{\vec{E}}$
    so that $\dom(f^p_\alpha) \subseteq  \dom(\nu)$ for every $\alpha < \beta$. 
    
    Define $\pi^*_\nu(p)$ to be the sequence $\la \pi^*_\nu(p_\alpha) \mid \alpha < \beta\ra$, such that for every $\alpha < \beta$, 
    $\pi^*_\nu(p_\alpha) = \begin{cases} \la \pi^*_\nu(f^p_\alpha), \pi^*_\nu(A^p_\alpha)\ra & \alpha\not\in s(p) \\
    \la\pi^*_\nu(f^p_\alpha)\ra & \alpha\in s(p)
    
    \end{cases}$, with the above defined projections of functions and measure-one sets of objects. 
\end{enumerate}
\end{definition}

\begin{definition}[One-point extensions and End extensions]\label{definition:one-point}
Given a condition $p \in \po_{\vec{E}}$ and an object $\nu \in A^p_{\alpha}$ for some $\alpha \in \theta \setminus s$  define the one-point extension $p^{+ \nu}$ of $p$ by $\nu$ to be 
given by 

\begin{enumerate}

    \item $p^{+\nu}\uhr \alpha = \pi^*_\nu (p\uhr \alpha)$, 
    
    \item $p^{+\nu}_{\alpha} = (f^p_{\alpha})_{\nu}$, where for any $i\in \dom(f^p_\alpha)=\dom((f^p_\alpha)_\nu)$, $(f^p_{\alpha})_{\nu}(i)=\begin{cases} 
    \nu(i) & i\in \dom(\nu) \\ 
    f^p_\alpha(i) & i\not\in \dom(\nu)
    \end{cases}$.
    
    \item For $\beta > \alpha$, $p^{+\nu}_\beta = \la f^p_\beta, A'_\beta\ra$, where  $A'_\beta = \{ \mu \in A^p_\beta \mid \mu(\kappa_\beta) >  \lambda(\nu(\kappa_\alpha)) \}$
    
    
\end{enumerate}

We say that a condition $p'$ is an end extension of a condition $p$ if it is a condition resulting from a sequence of one-point extensions by objects $\nu_0,\nu_1,\dots, \nu_k$ for some finite $k$. We denote $p'$ by $p^{+\vec{\nu}}$ where $\vec{\nu} = \la \nu_0,\dots,\nu_k\ra$. 
\end{definition}

\begin{definition}[Direct extensions in $\po_{E}$ and the ordering of $\po_{E}$]
Let $p,p^* \in \po_{\vec{E}}$. We say that $p^*$ is a direct extension of $p$ (denoted $p^* \geq p$) if 
$s(p) = s(p^*)$ and for every $\alpha \in \theta \setminus s(p)$, 
 $f^{p}_\alpha \subseteq f^{p^*}_\alpha$ and 
 $$A^{p^*}_\alpha \subseteq \{ \nu \in OB_{E^p_\alpha}(f^{p^*}_\alpha) : \nu \uhr \dom(f^p_\alpha) \in A^p_\alpha\}$$

For conditions $p,q \in \po_{\vec{E}}$, $q$ extends $p$ (namely, $q\geq_{\po_{\vec{E}}} p$) if it is obtained from $p$ by finitely many applications of direct extensions and end extensions.
\end{definition}

\begin{remark}\label{RMK:P_E-basicProperties}
Assuming GCH, $\po_{\vec{E}}$ has the following basic properties: 
\begin{enumerate}
 \item $\po_{\vec{E}}$ has size $|\po_{\vec{E}}| =\lambda$ and satisfies the $\theta^{++}$-c.c. 
    \item for every $p \in \po_{\vec{E}}$ and $\alpha \not\in s(p)$ and $\nu \in A^p_\alpha$,  $\po_{E}/p^{+\nu}$ factors to a product $\po_{\pi_\nu^*(\vec{E}\restriction \alpha)}/(p\uhr \alpha) \times \po_{\vec{E}\backslash \alpha}/(p\downharpoonright \alpha)$, whose first component satisfies $\bar{\kappa}_\alpha^{++}$-c.c, and the direct extension of the second component is $\kappa_\alpha^+$-closed.
\end{enumerate}
\end{remark}

\subsection{Prikry-type properties and capturing dense open sets}\label{Section: Prikry-typeProperties}

\begin{definition}
Let $p \in \po_{\vec{E}}$ be a condition. An $(p,n)$-fat-tree is a tree $T$ of height $n$, with the following properties:
\begin{enumerate}
    \item $stem(T) = \la \ra$ is the empty sequence,
    
    \item for every sequence  $\vec{\nu} =\la \nu_i \mid i < k\ra \in T$, $p^{+\vec{\nu}}$ is an end-extension of $p$
    
    \item  for every sequence  $\vec{\nu} =\la \nu_i \mid i < k\ra \in T$, of non-maximal length $k < n$, there exists some $\alpha_{\vec{\nu}} < \theta$ so that 
    $\suc_T(\vec{\nu}) \in E^p_{\alpha_{\vec{\nu}}}(f^p_{\alpha_{\vec{\nu}}})$,
    
    \item  $\alpha_{\vec{\nu}} < \alpha_{\vec{\nu}\fr\la \mu\ra}$ for every $\vec{\nu} \in T$ and $\mu \in \suc_T(\vec{\nu})$.
    
\end{enumerate}

    We say that a $T$ is fully compatible with $p$ if $\suc_T(\vec{\nu}) = A^p_{\alpha_{\vec{\nu}}}$ for every non-maximal sequence $\vec{\nu} \in T$.
\end{definition}

The following is a simple consequence of the order definition of $\po_{\vec{E}}$.

\begin{lemma}\label{Lem:FatTreeBasics}${}$
\begin{enumerate}
    \item 
If $T$ is a $(p,n)$-fat-tree which is fully compatible with $p$, then the set of end-extensions
$\{ p^{+\vec{\nu}} \mid \vec{\nu} \in T, |\vec{\nu}| = n\}$ 
is predense in $\po_{\vec{E}}/p$.

\item For every condition $p \in \po_{\vec{E}}$ and a $(p,n)$-fat-tree $T$, there exists a direct extension $p^* \geq^* p$ and a $(p^*,n)$-fat subtree $T^*$ of $T$ so that $T^*$ is fully compatible with $p^*$.

\end{enumerate}
\end{lemma}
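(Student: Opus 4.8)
The plan is to treat the two clauses separately; clause (2) is the substantive one, so I would prove it first. Fix a $(p,n)$-fat-tree $T$. For each $\alpha<\theta$ let $N_\alpha=\{\vec\nu\in T: \alpha_{\vec\nu}=\alpha\}$ be the set of non-maximal nodes whose designated coordinate is $\alpha$, and set
$$A^*_\alpha=\bigcap_{\vec\nu\in N_\alpha}\suc_T(\vec\nu),$$
with $A^*_\alpha=A^p_\alpha$ when $N_\alpha=\emptyset$. I would let $p^*$ be the direct extension of $p$ obtained by replacing each $A^p_\alpha$ (for $\alpha\notin s(p)$) by $A^*_\alpha$ while keeping all the functions $f^p_\alpha$ fixed, and let $T^*=\{\vec\nu\in T: \nu_i\in A^*_{\alpha_{\vec\nu\restriction i}}\text{ for all }i<|\vec\nu|\}$ be the induced subtree. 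Since $A^*_\alpha\subseteq \suc_T(\vec\nu)$ for every $\vec\nu\in N_\alpha$, one gets $\suc_{T^*}(\vec\nu)=\suc_T(\vec\nu)\cap A^*_{\alpha_{\vec\nu}}=A^*_{\alpha_{\vec\nu}}=A^{p^*}_{\alpha_{\vec\nu}}$, which is exactly full compatibility; the remaining clauses of the fat-tree definition (empty stem, strictly increasing coordinates, and each $p^{*+\vec\nu}$ being an end extension of $p^*$) are inherited from $T$ together with $A^*_\alpha\subseteq A^p_\alpha$.

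The one point requiring an argument is that each $A^*_\alpha$ remains measure one, i.e.\ $A^*_\alpha\in E^p_\alpha(f^p_\alpha)$. Because $E_\alpha$ has critical point $\kappa_\alpha=\cp(E_\alpha)$ and is $\kappa_\alpha$-complete, each measure $E^p_\alpha(f^p_\alpha)$ is $\kappa_\alpha$-complete, so it suffices to show $|N_\alpha|<\kappa_\alpha$. By clause (4) of the fat-tree definition, every $\vec\nu\in N_\alpha$ is a finite sequence whose entries $\nu_i$ lie in successor sets at the strictly smaller coordinates $\alpha_{\vec\nu\restriction i}<\alpha$. At a coordinate $\beta<\alpha$ the possible entries are partial functions $\nu$ with $\dom(\nu)\subseteq\dom(f^p_\beta)$ and $|\nu|=\nu(\kappa_\beta)<\kappa_\beta$; as $|\dom(f^p_\beta)|\le\kappa_\beta$ and $\kappa_\beta$ is inaccessible, there are at most $\kappa_\beta$ of these. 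Since the critical points are discrete we have $\sup_{\beta<\alpha}\kappa_\beta<\kappa_\alpha$ and $|\alpha|<\kappa_\alpha$, so by the regularity of $\kappa_\alpha$ the number of finite increasing-coordinate sequences of such entries below $\alpha$ is $<\kappa_\alpha$. Hence $|N_\alpha|<\kappa_\alpha$ and $A^*_\alpha$ is measure one, as needed.

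For clause (1), I would show that every $q\ge p$ is compatible with some maximal leaf $p^{+\vec\nu}$ with $\vec\nu\in T$ of length $n$. Using the order definition, write $q\ge^* p^{+\vec\mu}$ for an end extension $p^{+\vec\mu}$ of $p$, and build the branch $\vec\nu$ recursively, processing the designated coordinates $\alpha_{\langle\rangle}<\alpha_{\vec\nu\restriction 1}<\cdots$ of $T$ in increasing order while maintaining at each stage $i$ an auxiliary condition $q_i\ge q$ extending $p^{+(\vec\nu\restriction i)}$. At the coordinate $\alpha=\alpha_{\vec\nu\restriction i}$ there are two cases. If $\alpha\in s(q_i)$, then $q_i$ has already committed an object at $\alpha$ whose restriction to $\dom(f^p_\alpha)$ lies in $A^p_\alpha=\suc_T(\vec\nu\restriction i)$ by full compatibility, and I take $\nu_i$ to be that restriction and $q_{i+1}=q_i$. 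Otherwise $\alpha\notin s(q_i)$, so $q_i$ carries a measure one set $A^{q_i}_\alpha$ all of whose elements restrict into $A^p_\alpha$; I pick any $\mu\in A^{q_i}_\alpha$, set $\nu_i=\mu\restriction\dom(f^p_\alpha)\in\suc_T(\vec\nu\restriction i)$ and $q_{i+1}=q_i^{+\mu}$. In either case $p^{+(\vec\nu\restriction(i+1))}$ and $q_{i+1}$ are compatible, and after $n$ steps $q_n$ witnesses the compatibility of $q$ with the leaf $p^{+\vec\nu}$.

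The main obstacle in both clauses is the bookkeeping of the projection operators $\pi^*_\nu$ of Definition~\ref{Def:ProjOperatorOnSets&Funcs} together with the shrinking of the higher measure one sets performed in a one-point extension (Definition~\ref{definition:one-point}(3)). Concretely, in clause (1) I must verify that restricting a committed object back to $\dom(f^p_\alpha)$ really lands in $A^p_\alpha$ and that the compatibility recorded by $q_i$ genuinely survives each one-point extension $q_i^{+\mu}$. A related delicate point, which I expect to be the crux of the write-up, is that a one-point extension shrinks the measure one sets at larger coordinates only by a measure-zero set; in clause (2) one should either absorb this into the definition of $A^*_\alpha$ or observe that it is irrelevant to full compatibility, and this is precisely where the discreteness and inaccessibility hypotheses on $\vec E$, and the $\kappa_\alpha$-completeness of $E^p_\alpha(f^p_\alpha)$, are used.
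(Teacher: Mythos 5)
The paper never writes a proof of this lemma at all: it is stated right after the remark that it is ``a simple consequence of the order definition of $\po_{\vec{E}}$,'' so there is no argument of the authors' to compare yours against; what you have done is supply the omitted routine verification, and your supply is essentially correct. For clause (2) the only substantive step is exactly the one you isolate: every node of $N_\alpha$ is a finite sequence of objects living at coordinates $\beta<\alpha$, there are at most $\kappa_\beta$ objects at coordinate $\beta$ (inaccessibility of $\kappa_\beta$ and $|\dom(f^p_\beta)|\leq\kappa_\beta$), and discreteness of the critical points together with $|\alpha|<\kappa_\alpha$ give $|N_\alpha|<\kappa_\alpha$, so $\kappa_\alpha$-completeness of $E^p_\alpha(f^p_\alpha)$ makes $A^*_\alpha$ measure one; note that the containment $A^*_\alpha\subseteq A^p_\alpha$, which you use silently when declaring $p^*$ a direct extension of $p$, does hold, because each $\suc_T(\vec\nu)$ with $\alpha_{\vec\nu}=\alpha$ is contained in $A^p_\alpha$ (this follows from clause (2) of the fat-tree definition, since $p^{+\vec\nu\fr\la\mu\ra}$ being an end extension forces $\mu\in A^{p^{+\vec\nu}}_\alpha\subseteq A^p_\alpha$). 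For clause (1) your two-case branch-matching recursion is the standard density argument. The one place where your deferred ``bookkeeping'' conceals a genuinely delicate point is the normal form $q\geq^*p^{+\vec\mu}$ combined with Case 1: if the one-point extensions producing $q$ were performed with a higher coordinate before a lower one, then the object committed at the lower coordinate is a $\pi^*$-projected object of a reflected extender, so it must be unprojected before restricting to $\dom(f^p_\alpha)$, and one must also check the interleaving requirement $\nu_i(\kappa_{\alpha_i})>\lambda(\nu_j(\kappa_{\alpha_j}))$ for $j<i$, without which $p^{+\vec\nu}$ is not a legal end extension along the branch and hence not a leaf. Both points are settled by the typicality assumptions on objects that the paper (following Gitik) leaves implicit --- order preservation and the fact that measure one many objects fix the ordinals below their critical point, which forces $\nu_i(\kappa_{\alpha_i})>\kappa_{\alpha_j}>\lambda(\nu_j(\kappa_{\alpha_j}))$ automatically and lets one rearrange any extension into one-point extensions taken at increasing coordinates followed by a direct extension. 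With that made explicit, your recursion closes and the proof is complete.
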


The following statement is known as the strong Prikry Lemma. We omit the proof as it is similar to the argument in Gitik's singular form from  \cite{GITIK:OverlappingEBF}. 

\begin{proposition}\label{Prop:StrongPrikry}(Strong Prikry Property)\\
Let $D \subseteq \po_{\vec{E}}$ be a dense open set. 
For every condition $p \in \po_{\vec{E}}$ there is a direct extension $p^* \geq^* p$ and a $(p^*,k)$-fat-tree $T$ which is fully compatible with $p^*$ such that for every $\vec{\nu} \in T$ of maximal length $|\vec{\nu}| = k$, $(p^*)^{+\vec{\nu}} \in D$. \end{proposition}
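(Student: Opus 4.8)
The plan is to prove the Strong Prikry Property by combining the Prikry-type ``direct extension decides'' lemma with the fat-tree machinery of Lemma \ref{Lem:FatTreeBasics}. The overall strategy follows the standard template for extender-based Prikry forcings: first establish that for a dense open $D$ and a condition $p$, one can thin out the measure-one sets $A^p_\alpha$ so that the question of whether a one-point (more generally, end) extension lands in $D$ is \emph{governed} by the objects used, and then organize these extensions into a fat-tree of uniform height $k$.

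First I would fix $p\in \po_{\vec{E}}$ and consider the set of end extensions of $p$. The key preliminary step is a \emph{Prikry Lemma} for $\po_{\vec{E}}$: for every condition $q$ and every statement (here, ``$q \in D$''), there is a direct extension $q^* \geq^* q$ deciding it. This is proved by the usual argument of repeatedly reflecting the decision through the measures $E^q_\alpha(f^q_\alpha)$ and using the completeness of these measures together with the tail-closure properties recorded in Remark \ref{RMK:P_E-basicProperties}(2) — namely that the direct extension order on the upper part $\po_{\vec{E}\backslash \alpha}$ is $\kappa_\alpha^+$-closed, so diagonal intersections of the relevant measure-one sets remain measure-one. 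Granting this, I would apply it to the various one-point extensions $p^{+\nu}$ of $p$: for $E^p_\alpha(f^p_\alpha)$-almost every $\nu \in A^p_\alpha$, there is a direct extension of $p^{+\nu}$ into $D$ (since $D$ is dense), and by completeness and a diagonal-intersection argument one shrinks each $A^p_\alpha$ to a direct extension $p' \geq^* p$ such that every one-point extension $(p')^{+\nu}$ already has a uniformly-determined behavior with respect to $D$.

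Next I would iterate this to build the tree. Starting from $p'$, I recursively apply the one-point analysis: at each node $\vec{\nu}$ already processed, if $(p')^{+\vec{\nu}} \notin D$ I look at its one-point extensions, shrink their measure-one sets so that the ``next coordinate'' question about $D$ is decided almost everywhere, and use density of $D$ to guarantee that after finitely many steps a member of $D$ is reached. The crucial uniformity — that there is a \emph{single} finite height $k$ working on a measure-one set of branches rather than a height depending on the branch — is extracted by a reflection/pressing-down argument through one of the measures, exactly as in Gitik's singular case; this bounds the length at which one enters $D$. Having secured the almost-everywhere tree structure, I would pass to a direct extension $p^* \geq^* p'$ absorbing all the shrinkings (using tail-closure to take the requisite diagonal intersection across the $\theta$-many coordinates) so that the resulting successor sets are exactly the $A^{p^*}_{\alpha_{\vec{\nu}}}$, giving a genuine $(p^*,k)$-fat-tree $T$; Lemma \ref{Lem:FatTreeBasics}(2) then lets me refine to a fully compatible subtree, along which every maximal $\vec{\nu}$ satisfies $(p^*)^{+\vec{\nu}} \in D$.

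The main obstacle I anticipate is the uniform bound $k$ on the height: a naive argument only yields, for each branch, \emph{some} finite stage at which $D$ is entered, and one must upgrade this to a single $k$ valid on a measure-one set. This is where the discreteness of the critical points (condition (1) of forcing suitability) and the completeness of the extender measures do the real work, allowing a normality/pressing-down argument to stabilize the entry height. A secondary technical point is ensuring that the many successive shrinkings of the $\theta$-many coordinate sets can be amalgamated into a single direct extension; this relies precisely on the $\kappa_\alpha^+$-closure of the tail direct-extension order from Remark \ref{RMK:P_E-basicProperties}(2), and care is needed because the relevant extenders $E^p_\alpha$ themselves shift under the projection operators $\pi^*_\nu$ along the tree.
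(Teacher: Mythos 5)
You should note first that the paper does not actually prove Proposition \ref{Prop:StrongPrikry}: it explicitly omits the argument, deferring to Gitik's proof for the singular-length case in \cite{GITIK:OverlappingEBF}, so your proposal can only be judged on its own merits. As it stands it has two genuine gaps. The first is the step where you claim that, since $D$ is dense, for $E^p_\alpha(f^p_\alpha)$-almost every $\nu\in A^p_\alpha$ there is a \emph{direct} extension of $p^{+\nu}$ lying in $D$. Density only yields \emph{some} extension in $D$, which in general requires adding further points; the possibility that no direct extension of any one-point extension meets $D$ is exactly why the fat tree must have height $k>1$, so this step fails. The correct mechanism is the ultrafilter dichotomy: for each finite pattern of coordinates and objects, the set of next objects $\nu$ for which some direct extension of the corresponding end extension lies in $D$ is either measure one or measure zero (each $E^p_\alpha(f^p_\alpha)$ is a $\kappa_\alpha$-complete ultrafilter), one prepares $p^*$ so that the answer is uniform at every level, and then a \emph{single} application of density above $p^*$ produces one witness in $D$ whose number of added points is the uniform $k$ (equivalently, integer-valued rank functions are constant on measure-one sets by completeness). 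Normality or pressing down is not what drives the uniformity. Relatedly, your appeal to a Prikry Lemma applied to the ``statement'' $q\in D$ is a category error: membership of a fixed condition in a fixed ground-model set is not a statement of the forcing language, and in this paper the Prikry property (Corollary \ref{corollary:PrikryLemma}) is \emph{deduced from} the strong Prikry property, so importing it as a preliminary step would require an independent proof and risks circularity.

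The second, more serious gap is the amalgamation issue you flag in your last sentence but do not solve, and for this particular forcing it is the heart of the matter. When you pass from $p$ to $p^{+\nu}$ at coordinate $\alpha$, the entire lower part is transformed into a condition of a \emph{different} poset $\pi^*_\nu(\po_{\vec{E}^p\uhr\alpha})$, and this poset varies with $\nu$. Consequently the shrinkings you wish to perform at the various nodes of the tree do not live on the coordinates of $p$ at all and cannot be combined by ``diagonal intersections across the $\theta$-many coordinates using the $\kappa_\alpha^+$-closure of the tail ordering''; that closure controls only the upper parts. The known way through --- used in Gitik's omitted argument and in this paper's proof of the analogous Lemma \ref{Lem:StrongCapturing} --- is to fix at each coordinate $\beta$ an elementary submodel $N\prec H(\chi)$ with $|N|=\kappa_\beta$ and ${}^{<\kappa_\beta}N\subset N$, and to incorporate into the condition a totally $N$-generic function in the sense of Definition \ref{Def:TotallyGenericConditions}: since each object $\nu$ and each candidate lower part $w$ then belong to $N$, total genericity meets all of the (fewer than $\kappa_\beta$ many) relevant dense sets of the varying lower posets simultaneously. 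Without this, or an equivalent idea, your construction cannot be carried out, so the proposal is missing the core of the proof rather than just technical polish.
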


From the strong Prikry Lemma, it is easy to deduce the Prikry property.
 \begin{corollary}\label{corollary:PrikryLemma}
 $\po_{\vec{E}}$ satisfies the Prikry Property, namely, for any condition $p\in \po_{\vec{E}}$ and any statement $\varphi$ in the forcing language, there exists an direct extension $q$ of $p$ such that $q\Vdash \varphi$ or $q\Vdash \neg \varphi$.
 \end{corollary}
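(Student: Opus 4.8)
The plan is to deduce the Prikry Property from the Strong Prikry Property of Proposition \ref{Prop:StrongPrikry} by the standard trick of applying the latter to the dense open set \emph{decided by} a given statement $\varphi$. Given a condition $p \in \po_{\vec{E}}$ and a formula $\varphi$ in the forcing language, I would first set
$$D_\varphi = \{ q \in \po_{\vec{E}} : q \Vdash \varphi \text{ or } q \Vdash \neg\varphi \}.$$
This set is dense (any condition can be extended to decide $\varphi$) and open (if $q$ decides $\varphi$ then so does every extension of $q$). Applying Proposition \ref{Prop:StrongPrikry} to $D_\varphi$ and $p$ yields a direct extension $p^* \geq^* p$ together with a $(p^*,k)$-fat-tree $T$ fully compatible with $p^*$, such that every maximal end-extension $(p^*)^{+\vec{\nu}}$ (for $\vec{\nu} \in T$ of length $k$) lies in $D_\varphi$, i.e.\ decides $\varphi$.

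The core of the argument is then to show that $p^*$ itself already decides $\varphi$, so that no further combinatorial input beyond $T$ is needed. Each maximal node $\vec{\nu}$ carries a ``truth value'' $\epsilon(\vec{\nu}) \in \{0,1\}$ recording whether $(p^*)^{+\vec{\nu}}$ forces $\varphi$ or $\neg\varphi$. The aim is to thin $T$ (passing to a direct extension of $p^*$ if necessary) so that this coloring is constant, and then to read off the decision at the root. I would proceed by downward induction on the height of the tree, using the fact that at each non-maximal node $\vec{\nu}$ the successor set $\suc_T(\vec{\nu}) = A^{p^*}_{\alpha_{\vec{\nu}}}$ is a measure-one set for the measure $E^{p^*}_{\alpha_{\vec{\nu}}}(f^{p^*}_{\alpha_{\vec{\nu}}})$: by normality/$\kappa_{\alpha_{\vec{\nu}}}$-completeness of this measure, the induced coloring on $\suc_T(\vec{\nu})$ is constant on a measure-one subset, which lets one shrink $A^{p^*}_{\alpha_{\vec{\nu}}}$ and collapse the truth value from the immediate successors into a single value assigned to $\vec{\nu}$. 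Iterating from the maximal level down to the stem produces a direct extension $q \geq^* p^*$ (obtained by replacing each $A^{p^*}_{\alpha}$ with the relevant measure-one subset) whose fat-tree is monochromatic, so the constant truth value $\epsilon(\la\ra)$ is well-defined.

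Finally I would argue that this constant value is the decision made by $q$ itself. By Lemma \ref{Lem:FatTreeBasics}(1), the maximal end-extensions $\{q^{+\vec{\nu}} : \vec{\nu} \in T^*, |\vec{\nu}| = k\}$ of the monochromatic subtree are predense below $q$; since they all force the same truth value (say $\varphi$), no extension of $q$ can force $\neg\varphi$ — any such extension would be compatible with some $q^{+\vec{\nu}}$ forcing $\varphi$, a contradiction. Hence $q \Vdash \varphi$ (resp.\ $q \Vdash \neg\varphi$), which is exactly the Prikry Property.

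The main obstacle I anticipate is the bookkeeping in the downward induction: one must verify that the measure-one shrinking performed at each of the (finitely many) levels is simultaneously compatible, i.e.\ that replacing the sets $A^{p^*}_\alpha$ level by level genuinely yields a legitimate direct extension $q \geq^* p^*$ rather than merely a tree of extensions. Because the height $k$ is finite and each $E^{p^*}_{\alpha}(f^{p^*}_{\alpha})$ is $\kappa_\alpha$-complete, finitely many intersections of measure-one sets remain measure-one, so the shrinkings can be amalgamated into a single direct extension; making this precise, and confirming that the constancy obtained at each node is preserved under the shrinking applied at lower nodes, is the delicate point. The remaining steps — density and openness of $D_\varphi$, and the predensity appeal — are routine.
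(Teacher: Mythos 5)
Your overall strategy is the intended one (the paper itself omits the proof, remarking only that the Prikry property follows easily from Proposition \ref{Prop:StrongPrikry}): apply the strong Prikry property to the dense open set of conditions deciding $\varphi$, homogenize the resulting fat tree using completeness of the measures, and conclude via predensity of the maximal end-extensions from Lemma \ref{Lem:FatTreeBasics}(1). The density/openness of $D_\varphi$, the level-by-level homogenization, and the final predensity argument are all correct.

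However, the justification you give for the step you yourself flag as delicate --- the amalgamation of the node-wise shrinkings into a single direct extension --- is wrong as stated. You argue that ``finitely many intersections of measure-one sets remain measure-one'' because the height $k$ is finite. But the number of shrinkings required at a fixed coordinate $\alpha$ is not finite: every node $\vec{\nu}$ of the tree with $\alpha_{\vec{\nu}} = \alpha$ demands its own measure-one subset $B_{\vec{\nu}} \subseteq A^{p^*}_{\alpha}$, and the set of such nodes is in general infinite (its size is roughly $\max(|\alpha|, \vec{\kappa}_\alpha)$, since the entries of such nodes range over measure-one sets attached to earlier coordinates). Finiteness of the height bounds the length of each node, not the number of nodes pointing at a given coordinate. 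The step can be repaired in two ways. First, by correct counting: the number of nodes with $\alpha_{\vec{\nu}} = \alpha$ is strictly less than $\kappa_\alpha$ (as $\kappa_\alpha > \vec{\kappa}_\alpha$ and $\kappa_\alpha > |\alpha|$), so the $\kappa_\alpha$-completeness of $E^{p^*}_\alpha(f^{p^*}_\alpha)$ --- not finiteness --- lets you intersect all the required $B_{\vec{\nu}}$ into a single measure-one set $A'_\alpha$. Second, and more in the spirit of the paper's toolkit: avoid amalgamation altogether. A fat tree does not need uniform successor sets, so node-by-node shrinking already yields a monochromatic $(p^*,k)$-fat subtree $T'$ of $T$; then Lemma \ref{Lem:FatTreeBasics}(2) --- which you never invoke, although it is stated precisely for this purpose --- produces a direct extension $q \geq^* p^*$ and a fat subtree $T^*$ of $T'$ fully compatible with $q$, after which your predensity argument goes through verbatim, since each maximal end-extension $q^{+\vec{\nu}}$ extends the corresponding end-extension of $p^*$ and hence forces the common truth value.
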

 
 Combining Remark \ref{RMK:P_E-basicProperties} and the Prikry property \ref{corollary:PrikryLemma} shows that $\po_{E}$ does not collapse cardinals up to $\theta$ or above $\theta^{+}$
\begin{corollary}
$\po_{\vec{E}}$ preserves all cardinals $\tau \leq \theta$ or $\tau > \theta^{+}$.
\end{corollary}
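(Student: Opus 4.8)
The plan is to split the two ranges of cardinals and treat each with a different half of Remark~\ref{RMK:P_E-basicProperties}. For cardinals $\tau > \theta^+$, nothing beyond the chain condition is needed: every such $\tau$ is $\geq \theta^{++}$, and by part~(1) of Remark~\ref{RMK:P_E-basicProperties} the poset $\po_{\vec{E}}$ is $\theta^{++}$-c.c., so it preserves all cardinals (indeed all cofinalities) $\geq \theta^{++}$. The work is therefore concentrated on showing that $\po_{\vec{E}}$ preserves every cardinal $\tau \leq \theta$, and here the relevant tools are the factorization in part~(2) of Remark~\ref{RMK:P_E-basicProperties} together with the Prikry property (Corollary~\ref{corollary:PrikryLemma}).

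The engine of the argument is a ``no new short sequences'' lemma for the tail part of the factorization. Fix a condition $p$, an $\alpha \notin s(p)$, and some $\nu \in A^p_\alpha$, and use part~(2) of Remark~\ref{RMK:P_E-basicProperties} to factor $\po_{\vec{E}}/p^{+\nu}$ as $\po_{\pi_\nu^*(\vec{E}\restriction \alpha)}/(p\uhr \alpha) \times \po_{\vec{E}\setminus \alpha}/(p\downharpoonright \alpha)$, where the first (``lower'') factor is $\bar{\kappa}_\alpha^{++}$-c.c.\ and of size below $\kappa_\alpha < \theta$, and the direct extension order of the second (``tail'') factor is $\kappa_\alpha^+$-closed. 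First I would show that the tail factor adds no new function $\delta \to \Ord$ for any $\delta < \kappa_\alpha$: given a name for such a function, recursively build a $\geq^*$-decreasing sequence of conditions of length $\delta$, where each successor step decides one value of the function by the Prikry property (Corollary~\ref{corollary:PrikryLemma}), and limit steps are passed using $\kappa_\alpha^+$-closure of the direct extension order (legitimate since $\delta < \kappa_\alpha < \kappa_\alpha^+$). The resulting lower bound decides the whole function, so it lies in the extension by the lower factor $\po_{\pi_\nu^*(\vec{E}\restriction \alpha)}$.

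Granting this, preservation of $\tau \leq \theta$ follows by pushing any purported collapsing map into the lower factor. For $\tau = \theta$: a cofinal (hence collapsing) map $\delta \to \theta$ with $\delta < \theta$ would, after choosing $\alpha$ with $\delta < \kappa_\alpha$, be absorbed into the lower factor, a forcing of size $< \theta$; but a forcing of size $< \theta$ cannot singularize the inaccessible $\theta$, a contradiction. For $\tau < \theta$: given a surjection $\delta \to \tau$ with $\delta < \tau$, choose $\alpha$ least with $\kappa_\alpha > \delta$, absorb the map into the lower factor as above, and observe that $\bar{\kappa}_\alpha \leq \delta < \tau$. Whenever $\bar{\kappa}_\alpha^{++}\leq \tau$, the $\bar{\kappa}_\alpha^{++}$-c.c.\ of the lower factor preserves $\tau$, ruling the map out. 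The one remaining case is $\tau = \bar{\kappa}_\alpha^+$ with $\bar{\kappa}_\alpha = \delta$ a limit of critical points; this I would handle by induction on the length of $\vec{E}$, since then the lower factor is itself a forcing of the same form whose top cardinal $\bar{\kappa}_\alpha$ it singularizes, and a Prikry-type forcing preserves the successor of the cardinal it singularizes.

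I expect the main obstacle to be the tail ``no new short sequences'' lemma and, relatedly, the narrow successor case $\tau = \bar{\kappa}_\alpha^+$. The delicate point in the lemma is that the closure in part~(2) of Remark~\ref{RMK:P_E-basicProperties} applies only to the \emph{direct} extension order, not to the full ordering of $\po_{\vec{E}}$; it is precisely the Prikry property that lets one replace arbitrary value-deciding extensions by direct ones so that the closure can be invoked at limit stages. The successor case is subtle because neither the chain condition of the lower factor (which is too coarse exactly at $\bar{\kappa}_\alpha^+$) nor the closure of the tail factor settles it directly, so it must be routed through the inductive hypothesis on shorter extender sequences, where $\bar{\kappa}_\alpha$ plays the role of a singularized cardinal whose successor is preserved.
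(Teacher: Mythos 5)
Your overall skeleton --- the $\theta^{++}$-c.c.\ of Remark~\ref{RMK:P_E-basicProperties}(1) for $\tau>\theta^+$, and the factorization of Remark~\ref{RMK:P_E-basicProperties}(2) together with the Prikry property for $\tau\leq\theta$ --- is exactly what the paper intends, but your central lemma is false as stated, and the mechanism you give for proving it does not work. Corollary~\ref{corollary:PrikryLemma} produces a direct extension deciding a given \emph{statement}; it does not produce a direct extension deciding the \emph{value} of an ordinal-valued name, and in general none exists: if $\beta\notin s(p)$, then for any direct extension $p^*$ of $p$ the one-point extensions $(p^*)^{+\nu}$, $\nu\in A^{p^*}_\beta$, force different values of the generic ordinal $f_\beta(\kappa_\beta)$ of Definition~\ref{Definition:t-functions}, because $\nu\mapsto\nu(\kappa_\beta)$ is non-constant (indeed unbounded in $\kappa_\beta$) on every $E^{p^*}_\beta(f^{p^*}_\beta)$-measure-one set. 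For the same reason the tail $\po_{\vec{E}\setminus\alpha}$ genuinely \emph{does} add new functions $\delta\to\Ord$ with $\delta<\kappa_\alpha$: the sequence of generic values at its first $\omega$ coordinates differs, by a density argument, from every ground-model $\omega$-sequence. What is true, and what the argument needs, is the bounded-range version: for $\delta,\mu\leq\kappa_\alpha$, every name for a function $\delta\to\mu$ is decided by a single direct extension, obtained by running through the $\leq\kappa_\alpha$ many statements ``$\dot f(\beta)=\gamma$'', $(\beta,\gamma)\in\delta\times\mu$, applying Corollary~\ref{corollary:PrikryLemma} at each step and $\kappa_\alpha^+$-closure at limits; equivalently, the tail adds no new subsets of $\kappa_\alpha$. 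This corrected lemma still absorbs a purported collapsing surjection onto a cardinal $\tau\leq\kappa_\alpha$ into the lower factor, since such a surjection has range bounded by $\kappa_\alpha$, so your treatment of $\tau<\theta$ survives (modulo the residual case below).

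Two further points. First, your case $\tau=\theta$ collapses under the correction: a cofinal map $\delta\to\theta$ has unbounded range and cannot be absorbed into the lower factor; preserving the \emph{regularity} of $\theta$ is genuinely harder, and in the paper it is obtained only from the covering machinery developed afterwards (Lemma~\ref{Lem:StrongCapturing}, Proposition~\ref{proposition:almostSacks}, Corollary~\ref{cor:nofastclub}). Fortunately the Corollary claims only cardinal preservation, and there $\tau=\theta$ is free: $\theta$ is a limit cardinal, so once every $V$-cardinal below it remains a cardinal, so does $\theta$. Second, the principle you invoke for the residual case $\tau=\bar{\kappa}_\alpha^+$ --- ``a Prikry-type forcing preserves the successor of the cardinal it singularizes'' --- is not a theorem, and nothing here is singularized: for limit $\alpha$ the cardinal $\bar{\kappa}_\alpha$ is already singular in $V$, and for successor $\alpha$ it equals $\kappa_{\alpha-1}$, which stays regular. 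Induction on the length of the extender sequence is the right structural idea, but this case requires actual work (for successor $\alpha$, factor the lower part at its top coordinate, where conditions with that coordinate activated are dense and the remaining top part is the $\kappa_{\alpha-1}^+$-closed Cohen poset; for limit $\alpha$ it amounts to Gitik's cardinal-preservation theorem for the reflected, shorter sequence), not something to be quoted as a black box.
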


Our next goal is to verify $\po_{\vec{E}}$ does not collapse $\theta^{+}$ or change the cofinality of cardinals.  
For this we expand the description of the strong Prikry property for meeting a single dense open set $D$, to one which simultaneously deals with $\theta$-many dense open sets.
 
 \begin{definition}
Let $\vec{D}=\la D_\beta \subseteq \po_{\vec{E}}: \beta<\theta\ra$ be a sequence of dense open sets, $p \in \po_{\vec{E}}$, 
We say that $\vec{D}$ is strongly captured by $p$ if for every $\beta > \max(s(p))$ and $\nu \in A^p_\beta$ there exists a dense open subset 
$D_{\nu,<\beta} \subseteq \pi^*_\nu(\po_{\vec{E}^p\uhr \beta})$
such that for every $w \in D_{\nu,<\beta}$, 
there is a
 $(p^{+\nu}_{\geq \beta},n)$-fat tree $T^{\nu,w}$, for some $n < \omega$, such that for every maximal sequence $\vec{\mu} \in T^{\nu,w}$,
\[
w \fr p_{\geq\beta}^{+\la \nu\ra \fr \vec{\mu}} \in D_\beta.
\]
\end{definition}

\begin{lemma}\label{Lem:StrongCapturing}
For every sequence of dense open sets $\vec{D}=\la D_\beta\subseteq \po_{\vec{E}}: \beta<\theta \ra$ and $p \in \po_{\vec{E}}$ there exists a direct extension $p^* \geq^* p$ which strongly captures $\vec{D}$.
\end{lemma}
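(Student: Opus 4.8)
The plan is to prove the lemma by a fusion of length $\theta$ which, at stage $\beta$, arranges the capturing of the single dense open set $D_\beta$ \emph{at coordinate $\beta$}, modifying only the coordinates $\gamma \geq \beta$ of the condition. Concretely, I build a sequence $\langle p^\beta \mid \beta \leq \theta\rangle$ with $p^0 = p$, where $p^{\beta+1} \geq^* p^\beta$ differs from $p^\beta$ only on coordinates $\gamma \geq \beta$ (shrinking $A_\beta$ to a measure-one set of ``good'' objects $\nu$ and direct-extending the tail), and $p^* = p^\theta$ is the coordinatewise limit $f^{p^*}_\gamma = \bigcup_{\beta} f^{p^\beta}_\gamma$, $A^{p^*}_\gamma = \bigcap_\beta A^{p^\beta}_\gamma$. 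Since coordinate $\gamma$ is altered only at stages $\beta \leq \gamma$, it is touched at most $|\gamma|+1 < \kappa_\gamma$ times, so by the $\kappa_\gamma$-completeness of $E^{p}_\gamma(f^{p}_\gamma)$ and the bound $|f^{p^*}_\gamma| \le \kappa_\gamma$ the limit (and each intermediate stage) is a legitimate condition and a direct extension of $p$. The point of working coordinate $\beta$ at stage $\beta$ is that coordinates below $\beta$ are already frozen (each $\gamma < \beta$ is untouched after stage $\gamma$), so the lower forcing $\pi^*_\nu(\po_{\vec{E}^{p^*}\uhr \beta})$ appearing in the definition of capturing is already in final form; and capturing of $D_\beta$, once achieved, is preserved by the later tail direct extensions, because each $D_\beta$ is open and, by Lemma~\ref{Lem:FatTreeBasics}(2), any witnessing fat tree refines to a fully compatible fat subtree of a tail direct extension. (For $\beta \leq \max(s(p))$ there is nothing to do, since the definition of capturing only quantifies over $\beta > \max(s(p)) = \max(s(p^*))$.)

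This reduces everything to a single stage. Fix $\beta$ and $r := p^\beta$ with $\beta > \max(s(r))$ and coordinates below $\beta$ frozen, and fix first a single object $\nu \in A^r_\beta$. Below $r^{+\nu}$ the forcing factors (Remark~\ref{RMK:P_E-basicProperties}) as a product of the \emph{lower} forcing $L_\nu := \pi^*_\nu(\po_{\vec{E}^r\uhr\beta})$, of size $< \kappa_\beta$, and the \emph{tail} $\mathbb{T}_\nu := \po_{\vec{E}^r\setminus\beta}/r^{+\nu}_{\geq\beta}$, whose direct-extension order is $\kappa_\beta^+$-closed. Enumerate a dense subset $\langle w_i \mid i<\chi\rangle$ of $L_\nu$ with $\chi < \kappa_\beta$, and build a $\geq^*$-decreasing chain $\langle q_i \mid i \le \chi\rangle$ in $\mathbb{T}_\nu$ with $q_0 = r^{+\nu}_{\geq\beta}$, taking lower bounds at limits by the $\kappa_\beta^+$-closure. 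At step $i$, apply the Strong Prikry Property (Proposition~\ref{Prop:StrongPrikry}) to $D_\beta$ and the product condition $w_i \fr q_i$, obtaining a direct extension $w_i^\dagger \fr q_i^\dagger$ and a fully compatible fat tree all of whose maximal end-extensions lie in $D_\beta$; set $q_{i+1} = q_i^\dagger$. The key manipulation is to \emph{peel off} from this fat tree its initial levels, which occur at coordinates $< \beta$ and hence correspond to end-extensions of $w_i^\dagger$ inside $L_\nu$: along each lower branch $\vec\sigma_<$ (there are predensely-below-$w_i^\dagger$ many, by Lemma~\ref{Lem:FatTreeBasics}(1) applied in $L_\nu$) the residual tree lives on the tail and its maximal end-extensions, prepended with $w_i^{\dagger + \vec\sigma_<}$, land in $D_\beta$. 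Letting $q^*_\nu := q_\chi$ and $D_{\nu,<\beta}$ be the dense open set of lower conditions extending some such $w_i^{\dagger + \vec\sigma_<}$, openness of $D_\beta$ together with Lemma~\ref{Lem:FatTreeBasics}(2) transfers the residual trees to $q^*_\nu$, so that $q^*_\nu$ captures $D_\beta$ over $D_{\nu,<\beta}$.

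The main obstacle is the final amalgamation over $\nu$. The extension $q^*_\nu$ and the set $D_{\nu,<\beta}$ depend on $\nu$, whereas the conclusion demands a \emph{single} direct extension $r' \geq^* r$, agreeing with $r$ below $\beta$, with $A^{r'}_\beta \in E^r_\beta(f^r_\beta)$, such that ${r'}^{+\nu}_{\geq\beta} \geq^* q^*_\nu$ for every $\nu \in A^{r'}_\beta$; once this is obtained, $D_{\nu,<\beta}$ is defined as above and the trees transfer as before. I would carry this out as a master-condition/integration argument through the extender $E^r_\beta$: using $H_\lambda \subseteq M_{E^r_\beta}$ and the $\kappa_\beta^+$-closure of the tail, the assignment $\nu \mapsto q^*_\nu$ is coded by a single condition of $j_{E^r_\beta}(\mathbb{T})$ read off at the generic object $\{(j_{E^r_\beta}(\tau),\tau) \mid \tau \in \dom(f^r_\beta)\}$, from which one extracts master functions $f^{r'}_\gamma$ and measure-one sets $A^{r'}_\gamma$ for $\gamma > \beta$; equivalently, for each $\gamma > \beta$ one takes a diagonal intersection over $\nu$ of the sets $A^{q^*_\nu}_\gamma$, indexed so that the cut $\mu(\kappa_\gamma) > \lambda(\nu(\kappa_\beta))$ controls which $\nu$ are relevant, and amalgamates the functions $f^{q^*_\nu}_\gamma$.

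The two delicate technical points in this last step are (a) that the measure-one sets survive the diagonal intersection, which uses the normality of the $OB$-measures and the inequality $\kappa_\gamma > \kappa_\beta^+$ forced by the discreteness of the critical points (so that the relevant unions of functions, of size $\le \kappa_\gamma$, stay within a single condition at coordinate $\gamma$), and (b) that the function-parts $f^{q^*_\nu}_\gamma$ amalgamate consistently under $j_{E^r_\beta}$. These are exactly the places where the overlapping structure $E^r_\alpha \mo E^r_\beta$ and the representing functions $e^\beta_\alpha$ must be invoked, so that the projections $\pi^*_\nu$ align with $j_{E^r_\beta}$ and the generic object genuinely reconstructs $q^*_\nu$. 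Shrinking $A^{r'}_\beta$ to the measure-one set of $\nu$ on which the amalgamation succeeds completes the stage, and feeding this back into the fusion of the first paragraph yields the desired $p^*$.
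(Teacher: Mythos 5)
Your overall skeleton --- a $\leq^*$-fusion of length $\theta$ in which coordinate $\beta$ is handled at stage $\beta$ and frozen afterwards, with the successor stage reduced via the decomposition into $\pi^*_\nu(\po_{\vec{E}^p\uhr\beta})$ and the tail, the Strong Prikry Property (Proposition~\ref{Prop:StrongPrikry}), and the $\kappa_\beta^+$-closure of the tail's direct-extension order --- matches the paper's proof. The genuine gap is in your third paragraph, and it is not a finishing detail: it is the crux of the lemma. You first build, \emph{independently for each} $\nu\in A^r_\beta$, a tail condition $q^*_\nu$ by a chain that makes arbitrary choices (each application of the Strong Prikry Property selects \emph{some} direct extension), and only afterwards try to integrate the family $\nu\mapsto q^*_\nu$ into a single $r'$ with ${r'}^{+\nu}_{\geq\beta}\geq^* q^*_\nu$ for measure-one many $\nu$. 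This is impossible in general: by Definition~\ref{definition:one-point}, a one-point extension by $\nu$ leaves the Cohen parts at coordinates $\gamma>\beta$ untouched, so you would need $f^{r'}_\gamma\supseteq f^{q^*_\nu}_\gamma$ simultaneously for all relevant $\nu$, while the independently built functions $f^{q^*_\nu}_\gamma$ and $f^{q^*_{\nu'}}_\gamma$ may disagree on their common domain (the same conflict occurs at coordinate $\beta$ itself, outside $\dom(\nu)$). The ``equivalently, take a diagonal intersection of the $A^{q^*_\nu}_\gamma$'' reformulation fails for the same reason: those sets consist of objects over different, mutually incompatible domains, so they cannot even be intersected before the functions are amalgamated --- and note that what intersecting $\kappa_\beta$-many $E_\gamma$-large sets would need is completeness, not normality; the measures are not the bottleneck. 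Your remaining suggestion --- recast everything as one master condition in $j_{E_\beta}$ of the tail, read off at the generic object --- is the right instinct, but as written it is an assertion rather than an argument: executing it requires abandoning the per-$\nu$ construction entirely and running a single construction whose $\nu$-instances are coherent by design, and that uniformization is exactly what has to be proved.

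The paper achieves this uniformization with a device your proof never invokes: totally $N$-generic conditions (Definition~\ref{Def:TotallyGenericConditions}). At stage $\beta+1$ one fixes $N\prec H(\chi)$ with $|N|=\kappa_\beta$ and ${}^{<\kappa_\beta}N\subseteq N$, containing $q(\beta)$ and $D_\beta$, and uses the $\kappa_\beta^+$-closure of the direct-extension order to produce a \emph{single} condition $f^*_\beta\fr r^*_{>\beta}$, with $\dom(f^*_\beta)=N\cap\lambda$, bounding an $N$-generic filter. Then every $\nu\in A^*_\beta$ is an element of $N$ (by ${<}\kappa_\beta$-closure of $N$), every lower condition $w$ lies in $N$ (the projected lower forcing is small), hence all the dense sets $D_{w,\geq\beta}$ and $D^*_{w,\geq\beta}$ manufactured from the pairs $(\nu,w)$ lie in $N$; and since modifying a Cohen condition on $\dom(\nu)$ is an isomorphism of cones that belongs to $N$, the one condition $q(\beta+1)^{+\nu}_{\geq\beta}$ lands in all of these dense sets at once. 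No amalgamation over $\nu$ ever arises. If you prefer to keep a hands-on chain, the fix is to run \emph{one} chain of length $\kappa_\beta$ that bookkeeps all pairs $(\nu,w)$ together, using a modify--extend--unmodify step through that isomorphism so that the single growing Cohen part at $\beta$ serves every $\nu$ --- but that is, in substance, a proof that the totally $N$-generic condition exists, i.e., the paper's argument.
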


Before turning to the proof of the Lemma, we introduce a useful terminology of totally $N$-generic conditions.

\begin{definition}\label{Def:TotallyGenericConditions}
Let $(\po^*,\leq^*)$ be a poset which belongs to $H_{\chi}$ for some regular cardinal $\chi$, and $N \elem (H_{\chi},\in,\po^*,\leq^*)$. We say that a condition $f \in \po$ is totally $N$-generic if it bounds an $N$-generic filter for $\po^*$. Namely, the set $\{ f' \in N \cap \po^* : f' \leq f\}$ is generic over $N$.
\end{definition}

\begin{proof}

To simplify the presentation, we assume that $s(p)=\emptyset$. 

We recursively define a $\leq^*$-increasing $\langle q(\beta): \beta\leq\theta\rangle$ such that 
\begin{enumerate}
    \item $q_0=p$,
    \item $q(\beta)\restriction \alpha = q(\alpha)\restriction \alpha$ for any $\alpha<\beta< \theta$.
\end{enumerate}

At limit stages $\delta \leq \theta$, granted the existence of such a sequence, we can define 
$$q(\delta)_{<\delta}=\bigvee_{\alpha<\delta} q(\alpha)_{<\alpha}$$ and  
$$q(\delta)_{\geq \delta} = \bigvee_{\alpha < \delta} q(\alpha)_{\geq \delta}$$

where $\bigvee$ is the natural limit condition obtained by taking coordinate-wise unions of the Cohen functions and intersecting the appropriate pullbacks of the measure one sets. Note that $q(\delta)$ is easily seen to be a condition which satisfies the requirements above.

At stage $\beta+1$, given $q(\beta)$, we take  an elementary submodel $N\prec H(\chi)$  containing relevant objects including $\po_{\vec{E}},D=D_\beta$, and $q(\beta)$ with $|N|=\kappa_\beta$ and $^{<\kappa_\beta} N\subset N$. By the $\kappa_\beta^+$-closure of the direct extension ordering of $\po^*_{E^p(\beta)}\times \po^*_{\vec{E}\setminus \beta}$, we can find a totally $N$-generic condition $f^*_\beta \fr r^*_{>\beta}$ with $\dom(f^*_\beta)=N\cap \lambda(\vec{E}^p)$. Let $$A_\beta^* = \{\nu\in OB_{E_\beta}(f^*_\beta)\mid \nu\restriction \dom(f^{q(\beta)}_\beta)\in A^{q(\beta)}_\beta\},$$ 

and define 

$$q(\beta+1) = q(\beta)\uhr\beta \fr \la f^*_\beta,A^*_\beta\ra \fr r^*_{>\beta}.$$

For each $\nu \in A^*_\beta$, the forcing above $q(\beta+1)^{+\nu}$ breaks into a product 
$\pi^*_\nu(\po_{\vec{E}^p\uhr\beta}) \times \po_{\vec{E}^p\setminus \beta}$.
For each $w \in \pi^*_\nu(\po_{\vec{E}^p\uhr\beta})$, let $D_{w,\geq \beta} \subseteq \po_{\vec{E}^p\setminus \beta}$  be the set of all condition $q \geq q(\beta)^{+\nu}_{\geq \beta}$ such that either 
\begin{itemize}
    \item $q$ forces that there is no $q' \in \name{G}(\po_{\vec{E}^p\setminus \beta})$ for which $w \fr q' \in D$, or
    \item $w \fr q \in D$.
\end{itemize}
$D_{w,\geq \beta}$ is clearly dense open in $\po_{\vec{E}^p\setminus \beta}$
and belongs to $N$. 

By the strong Prikry property, there exists
a dense open set 
$D^*_{w,\geq \beta}
\subseteq \po^*_{E_\beta^p} \times \po_{\vec{E}^p\setminus(\beta+1)}$ in the direct extension ordering, such that for every $q^\nu_{\geq\beta} \in D^*_{w,\geq \beta}$ there is a  $(q^\nu_{\geq\beta},n_{\nu,w})$-fat-tree $T^{\nu,w}$, such that for every maximal sequence $\vec{\mu} \in T^{\nu,w}$, 
$(q^\nu_{\geq\beta})^{+ \vec{\mu}} \in D_{w,\geq \beta}$.

Since $\nu\in N$, $q(\beta)^{+\nu}, D_{w,\geq \beta}$, and $D^*_{w,\geq \beta}$ belong to $N$ as well. It follows from our $N$-genericity assumption of $q(\beta+1)$, that 
$q(\beta+1)^{+\nu}_{\geq\beta} \in D^*_{w,\geq \beta}$. 

Let 
\[D_{\nu,<\beta} = \{w \in \pi^*_\nu(\po_{\vec{E}^p\uhr\beta})
\mid \exists q'  \geq  q(\beta+1)^{+\nu}_{\geq \beta}. \ w \fr q' \in D.\} \]

$D_{\nu,<\beta}$ is clearly dense open in 
$\pi^*_\nu(\po_{\vec{E}^p\uhr\beta})$. Fix $w \in D_{\nu,<\beta}$. 
We verify that for every maximal sequence in the associated $(q(\beta+1)^{+\nu}_{\geq \beta},n_{\nu,w})$-fat-tree $T^{\nu,w}$, $\vec{\mu}$, the condition 
$w \fr q(\beta+1)^{+\la \nu\ra \fr \vec{\mu}}_{\geq \beta}$ belongs to $D$.

By the definition of $D_{w,\geq\beta}$, it suffices to verify that the condition $q(\beta+1)^{+\la \nu\ra \fr \vec{\mu}}_{\geq \beta}$ forces 
the  statement $\sigma_w$
 in forcing language of $\po_{\vec{E}^p\setminus\beta}$, 
\[\sigma_w := \exists q' \in \name{G}. \ w \fr q' \in D.\] 

As a matter of fact, we show that the weaker condition
$q(\beta+1)^{+\la \nu\ra}$ forces $\sigma_w$.
To see this, recall that by the Prikry Lemma applied to $\po^*_{E_\beta^p} \times \po_{\vec{E}^p\setminus(\beta+1)}$, the set of direct extensions of $q(\beta)^{+\nu\uhr \dom(f^q_\beta)}_{\geq \beta}$ deciding $\sigma_w$ is dense open above $q(\beta)^{+\nu\uhr \dom(f^q_\beta)}_{\geq \beta}$ in the direct extension ordering, and belongs to $N$.
It follows that $q(\beta+1)^{+\nu}_{\geq \beta}$ decides $\sigma_w$, and
since $w \in D_{\nu,<\beta}$ it must be that 
\[
q(\beta+1)^{+\nu}_{\geq \beta} \Vdash \sigma_w.
\]
\end{proof}

\begin{proposition}\label{proposition:almostSacks}
For any $p\in \po_{\vec{E}}$ and for any name $\dot{f}: \theta \to \Ord$, there exists a direct extension $q\geq^* p$ and a function $g: \theta\to [\Ord]^{<\theta}$ such that $q\Vdash \dot{f}(\alpha)\in g(\alpha)$ for all $\alpha<\theta$.
\end{proposition}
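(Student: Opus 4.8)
The plan is to reduce the statement to a single application of the Strong Capturing Lemma (Lemma \ref{Lem:StrongCapturing}). For each $\beta < \theta$, let $D_\beta \subseteq \po_{\vec{E}}$ be the set of conditions that decide the value of $\dot{f}(\beta)$; this is dense open by the Prikry property (Corollary \ref{corollary:PrikryLemma}). Applying Lemma \ref{Lem:StrongCapturing} to $\vec{D} = \langle D_\beta : \beta < \theta\rangle$ and $p$ produces a direct extension $q \geq^* p$ which strongly captures $\vec{D}$. I would retain the internal structure of the construction of $q$, namely the elementary submodels $N_\beta \prec H(\chi)$ of size $\kappa_\beta$ with ${}^{<\kappa_\beta}N_\beta \subseteq N_\beta$ and the totally $N_\beta$-generic conditions used at each stage $\beta$ (Definition \ref{Def:TotallyGenericConditions}), since these will be needed for the cardinality bound.

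Next I would read off $g$ directly from the capturing data. For each $\beta$, set
\[
g(\beta) = \{ \xi \in \Ord : \exists\, \nu \in A^q_\beta,\ \exists\, w \in D_{\nu,<\beta},\ \exists\, \vec{\mu} \in T^{\nu,w}\text{ maximal},\ w \fr q^{+\langle \nu\rangle \fr \vec{\mu}}_{\geq \beta} \Vdash \dot{f}(\beta) = \xi \}.
\]
That $q \Vdash \dot{f}(\beta) \in g(\beta)$ follows by composing three predensity facts: the one-point extensions $\{q^{+\nu} : \nu \in A^q_\beta\}$ are predense below $q$ as $A^q_\beta$ is measure-one, the sets $D_{\nu,<\beta}$ are dense in the lower factor $\pi^*_\nu(\po_{\vec{E}^q\uhr\beta})$, and by Lemma \ref{Lem:FatTreeBasics}(1) the maximal fat-tree extensions $\{q^{+\langle\nu\rangle\fr\vec{\mu}}_{\geq\beta} : \vec{\mu} \in T^{\nu,w}\}$ are predense in the upper factor. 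Hence the family of capturing conditions witnessing membership in $g(\beta)$ is predense below $q$, so every generic filter meets one of them and thereby decides $\dot{f}(\beta)$ to a value lying in $g(\beta)$.

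The remaining, and main, task is to verify $|g(\beta)| < \theta$. The number of pairs $(\nu,w)$ is below $\theta$: by Remark \ref{RMK:P_E-basicProperties}(2) the lower factor $\pi^*_\nu(\po_{\vec{E}^q\uhr\beta})$ lives below $\kappa_\beta$ in each of its fewer than $\theta$ coordinates, so it has size $<\theta$, while $|A^q_\beta| \leq |OB_{E_\beta}(f^q_\beta)| \leq \kappa_\beta^{<\kappa_\beta} = \kappa_\beta$ because $\kappa_\beta$ is inaccessible. I expect the genuine obstacle to be controlling, for a fixed pair $(\nu,w)$, the number of distinct values of $\dot{f}(\beta)$ realized along the maximal branches of $T^{\nu,w}$: a fat tree of finite height may a priori carry up to $\theta$ maximal branches, since the successor coordinates $\alpha_{\vec{\mu}}$ are only required to strictly increase and may be cofinal in $\theta$, so a naive enumeration of branch-values gives only the bound $\theta$. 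To overcome this I would, before forming $g(\beta)$, pass to a further direct extension of $q$ that shrinks the measure-one successor sets of the captured fat trees, using the $\kappa_{\alpha}$-completeness of each extender measure $E^q_\alpha(f^q_\alpha)$ to homogenize the value of $\dot{f}(\beta)$ top-down along the tree; since each node has at most $\kappa_{\alpha_{\vec{\mu}}} < \theta$ immediate successors and all combinatorial data at stage $\beta$ is confined to the model $N_\beta$ of size $\kappa_\beta$, this should bound the number of realized values by $\kappa_\beta < \theta$. Making this homogenization rigorous — reconciling the completeness of the high-critical-point measures with the genuine dependence of $\dot{f}(\beta)$ on the upper Prikry coordinates — is the crux of the argument, and the step I would expect to demand the most care.
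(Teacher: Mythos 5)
Your proposal follows the paper's proof step for step until the very end: the paper uses the same dense open sets $D_\beta$ of conditions deciding $\dot{f}(\beta)$, a single application of Lemma \ref{Lem:StrongCapturing}, and then defines $g(\beta)$ exactly as you do, as the set of values $\delta^{\nu,w,\vec{\mu}}$ forced by the capturing conditions $w \fr q^{+\la\nu\ra\fr\vec{\mu}}_{\geq\beta} \in D_\beta$; your predensity verification and your count of the pairs $(\nu,w)$ are precisely what the paper compresses into ``it is easy to see that $q$ and $g$ are as sought.''

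The gap is the step you single out as the crux, and it is a gap in your counting rather than in the construction: no homogenization is needed, because a fat tree of finite height automatically has fewer than $\theta$ maximal branches. By clauses (2) and (3) of the fat-tree definition, $\suc_T(\vec{\mu})$ consists of objects by which the condition can be end-extended at the single coordinate $\alpha_{\vec{\mu}}$, i.e.\ it is a set of objects for the one extender $E^q_{\alpha_{\vec{\mu}}}$; since objects are order-preserving partial functions of size $<\kappa_{\alpha_{\vec{\mu}}}$ from a domain of size $\leq\kappa_{\alpha_{\vec{\mu}}}$ into $\kappa_{\alpha_{\vec{\mu}}}$, inaccessibility of $\kappa_{\alpha_{\vec{\mu}}}$ gives $|\suc_T(\vec{\mu})| \leq \kappa_{\alpha_{\vec{\mu}}} < \theta$. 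Now induct on the finitely many levels: level $0$ is the root alone, and if level $k$ has size $<\theta$, then level $k+1$ is a union of $<\theta$ many successor sets, each of size $<\theta$, hence of size $<\theta$ by regularity of $\theta$. So each $T^{\nu,w}$ has $<\theta$ maximal branches (in particular the coordinates $\alpha_{\vec{\mu}}$ occurring in it are bounded below $\theta$, not cofinal), and $|g(\beta)|<\theta$ follows by one more use of regularity, with no further extension of $q$. Note also that the repair you sketch could not work as stated: $E^q_\alpha(f^q_\alpha)$ is only $\kappa_\alpha$-complete, so it stabilizes partitions into fewer than $\kappa_\alpha$ pieces, whereas the value forced for $\dot{f}(\beta)$ can depend injectively on the object chosen at a node (a name can simply read off generic information added at that coordinate), so in general there is no measure-one set of successors realizing a single value; fortunately one needs not one value per node but only $<\theta$ values per tree. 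One last minor omission: the capturing data only addresses coordinates $\beta > \max(s(q))$, which is why the paper first reduces to $s(p)=\emptyset$ by factoring $\po_{\vec{E}}/p$ at $\gamma=\max s(p)$ and using the $\kappa_\gamma^{+}$-c.c.\ of the lower factor; you should include this reduction or treat the finitely many remaining coordinates separately.
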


\begin{proof}
We may without loss of generality assume $s(p)=\emptyset$. Since otherwise, letting $\gamma=\max s(p)$, then we can decompose $\po_{\vec{E}}/p$ into $\po_0\times \po_1=\po_{\vec{E}^p\restriction \gamma+1}/p\restriction \gamma+1 \times \po_{\vec{E}\backslash \gamma+1}/p_{>\gamma}$. Since $\po_0$ has $\kappa_\gamma^{+}$-c.c, we can work in $\po_1$ above $p_{>\gamma}$ where the support is empty to obtain the desired conclusion.

Let $D_\beta=\{r\in \po_{\vec{E}}: \exists \delta \ r\Vdash \dot{f}(\beta)=\delta\}$ for $\beta<\theta$. It is easy to check that $D_\beta$ is a dense open set. Apply Lemma \ref{Lem:StrongCapturing} to $p$ and $\vec{D}=\la D_\beta: \beta<\theta\ra$ to get 
\begin{itemize}
    \item $q\geq^* p$ and 
    \item for any $\nu \in A^{q^*}_\beta$, a dense open subset $D_{\nu,<\beta}$ of $\pi_{\nu}^*(\po_{\vec{E}\restriction \beta})$ such that for any $w\in D_{\nu,<\beta}$, there exists a  $(q^{+\nu}_{\geq \beta}, n^{\nu,w})$-fat tree $T^{\nu,w}$ such that for any maximal sequence $\vec{\mu}\in T^{\nu, w}$, $(w\fr q)^{+\la \nu \ra \fr \vec{\mu}}\in D_\beta$, as witnessed by $\delta^{\nu, w, \vec{\mu}}\in \Ord$.
\end{itemize}
Define $g(\beta)=\bigcup\{\delta^{\nu, w, \vec{\mu}}: \nu\in A^{q}_\beta, w\in D_{\nu,<\beta}, \vec{\mu} $ is a maximal sequence from $T^{\nu, w}\}$. It is easy to see that $q$ and $g$ are as sought.
\end{proof}

\begin{corollary}\label{cor:nofastclub}
${ }$
\begin{enumerate}
    \item The cofinalities and cardinalities of $\theta$ and $\theta^+$ are preserved, 
    
    \item $\po_{\vec{E}}$ preserves all cofinalities and cardinalities, and 
    
    \item For every $\po_{\vec{E}}$-name of a closed unbounded subsets $\name{D}$ of $\theta$ and a condition $p$, there is a direct extensions $q$ of $p$ and a closed unbounded set $C \subseteq \theta$ such that $q \Vdash \check{C} \subseteq \name{D}$.
\end{enumerate}
\end{corollary}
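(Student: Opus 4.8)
The plan is to derive all three parts of Corollary \ref{cor:nofastclub} from the machinery already developed, principally Proposition \ref{proposition:almostSacks} (the ``almost Sacks'' capturing of a single function into small-width sets) and Lemma \ref{Lem:StrongCapturing} (strong capturing of $\theta$-many dense open sets). Parts (1) and (2) concern preservation of cofinalities and cardinalities, and part (3) is the statement that closed unbounded subsets of $\theta$ in the extension contain ground-model clubs captured by a direct extension; this last part is the substantive one and is exactly what underwrites condition (3) of Theorem \ref{theorem: mainconsistency}.

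For part (1), I would argue that $\po_{\vec{E}}$ cannot collapse $\theta$ or change its cofinality. Suppose toward a contradiction that some condition forces a name $\dot{f}$ to be a cofinal map from some $\mu < \theta$ into $\theta$ (or a surjection witnessing a collapse). Apply Proposition \ref{proposition:almostSacks} to obtain a direct extension $q$ and a function $g : \theta \to [\Ord]^{<\theta}$ with $q \Vdash \dot{f}(\alpha) \in g(\alpha)$ for all $\alpha$. Restricting attention to $\alpha < \mu$, the set $\bigcup_{\alpha < \mu} g(\alpha)$ has size at most $\mu \cdot \theta = \theta$ but, being a union of fewer than $\theta$ sets each of size $< \theta$ (and $\theta$ inaccessible hence regular), is in fact bounded below $\theta$; this contradicts $q$ forcing the range of $\dot{f}$ to be cofinal in $\theta$. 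The same boundedness argument handles $\theta^+$: since $\po_{\vec{E}}$ is $\theta^{++}$-c.c.\ by Remark \ref{RMK:P_E-basicProperties}, any name for a function on $\theta^+$ is covered by a ground-model function into sets of size $\leq \theta^+$, so $\theta^+$ is neither collapsed nor has its cofinality altered. Combining this with the already established Corollary preserving cardinals $\tau \leq \theta$ and $\tau > \theta^+$ yields part (2) at once: every cardinal and every cofinality is preserved.

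For part (3), let $\name{D}$ be a name for a closed unbounded subset of $\theta$ and fix $p$. The idea is to capture, for each $\beta < \theta$, the dense open set $D_\beta = \{ r \in \po_{\vec{E}} \mid \exists \delta > \beta,\ r \Vdash \check{\delta} \in \name{D}\}$ asserting that some element of $\name{D}$ above $\beta$ is decided. Apply Lemma \ref{Lem:StrongCapturing} to $p$ and $\vec{D} = \la D_\beta : \beta < \theta\ra$ to obtain a direct extension $q$ strongly capturing $\vec{D}$, with the associated objects $D_{\nu,<\beta}$, fat-trees $T^{\nu,w}$, and witnessing ordinals. From these witnesses I would read off, for each $\beta$, a ground-model ordinal $h(\beta) > \beta$ bounding all the decided elements of $\name{D}$ arising from the (boundedly many) maximal branches, using that there are fewer than $\theta$ relevant data $(\nu, w, \vec{\mu})$ below each $\beta$ and that $\theta$ is regular; then let $C$ be the club of closure points of $h$ in $V$. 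The predensity clause of Lemma \ref{Lem:FatTreeBasics}(1), together with the capturing, should force that every $\gamma \in C$ is a limit of elements of $\name{D}$, hence $\gamma \in \name{D}$ by the closedness of $\name{D}$, giving $q \Vdash \check{C} \subseteq \name{D}$.

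The main obstacle will be part (3): specifically, extracting a single ground-model function $h$ bounding the $\name{D}$-values and then verifying that closure points of $h$ are genuinely forced into $\name{D}$. The delicate point is bookkeeping across the product factorization $\pi^*_\nu(\po_{\vec{E}^q \uhr \beta}) \times \po_{\vec{E}^q \setminus \beta}$: for a fixed $\beta$ one must confirm that the collection of triples $(\nu, w, \vec{\mu})$ feeding into $h(\beta)$ has size $< \theta$, which rests on the discreteness of the critical points $\kappa_\alpha$ and the finiteness of the fat-tree heights, so that the supremum stays below $\theta$. One must also check that unboundedly many elements of $\name{D}$ get decided below each closure point — this is where the density of $D_{\nu,<\beta}$ combined with strong capturing must be invoked carefully to ensure that, below a closure point $\gamma$ of $h$, the decided values are cofinal in $\gamma$, so that $\gamma$ is a limit point of $\name{D}$ and hence in $\name{D}$.
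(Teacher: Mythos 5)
The genuine gap is in your part (1) treatment of $\theta^+$. You claim that the $\theta^{++}$-c.c.\ of $\po_{\vec{E}}$ (Remark \ref{RMK:P_E-basicProperties}) lets you cover the values of any name for a function on $\theta^+$ by ground-model sets of size $\leq\theta^+$, and that this shows $\theta^+$ ``is neither collapsed nor has its cofinality altered.'' That inference is invalid: a $\theta^{++}$-chain condition is perfectly compatible with collapsing $\theta^+$. For instance, $\mathrm{Coll}(\theta,\theta^+)$ has size $\theta^+$ under GCH, hence is trivially $\theta^{++}$-c.c., yet it collapses $\theta^+$ to $\theta$. Concretely, if $\dot f:\theta\to\theta^+$ is a name for a surjection, covering each value $\dot f(\alpha)$ by a set of size $\theta^+$ yields a union of size $\theta^+$, which gives no contradiction whatsoever. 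The correct argument --- and the one the paper uses, hidden in its phrase ``similar argument'' --- is to apply Proposition \ref{proposition:almostSacks} again, exactly as you did for $\theta$: for any name $\dot f:\rho\to\theta^+$ with $\rho\leq\theta$, a direct extension covers each value by a ground-model set of size $<\theta$, so the range has size at most $\theta$ and is therefore bounded below $\theta^+$ (as $\theta^+$ is regular in $V$); hence $\theta^+$ is neither collapsed nor made singular. A smaller but related gloss occurs in your part (2): the earlier corollary you cite only concerns \emph{cardinals}, so preservation of cofinalities of cardinals below $\theta$ still needs the factorization of Remark \ref{RMK:P_E-basicProperties} (a small-c.c.\ lower part times an upper part whose direct extension order is highly closed, combined with the Prikry property), which is what the paper invokes to get (2) from (1).

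Your part (1) argument for $\theta$ itself and your part (3) are essentially sound and agree in substance with the paper, but part (3) takes a detour: the paper proves it in one line by applying Proposition \ref{proposition:almostSacks} to the name for the function $\alpha\mapsto\min(\name{D}\setminus\alpha)$, setting $h(\alpha)=\sup g(\alpha)$, and taking the ground-model club of closure points of $h$; a closure point is then forced to be a limit of points of $\name{D}$, hence in $\name{D}$ by closedness. Your route through Lemma \ref{Lem:StrongCapturing} and the fat-tree predensity of Lemma \ref{Lem:FatTreeBasics} simply inlines the proof of Proposition \ref{proposition:almostSacks}, and the bookkeeping you flag is resolved exactly as in that proposition: for each $\beta$ there are fewer than $\theta$ many relevant triples $(\nu,w,\vec{\mu})$, since each fat tree has finite height with successor sets of size $<\theta$, so regularity of $\theta$ keeps $h(\beta)<\theta$. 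Citing the proposition directly would spare you that verification.
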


\begin{proof}${}$
\begin{enumerate}
    \item Let $p\in \po_{\vec{E}}$ and $\dot{f}: \rho\to \theta$, $\rho<\theta$ be given. By Proposition \ref{proposition:almostSacks}, there is $q\geq p$ and $g: \rho\to [\theta]^{<\theta}$ such that $q\Vdash \dot{f}(\alpha)\in g(\alpha)$ for any $\alpha<\rho$. Then $q\Vdash ran(\dot{f})\subset \sup \bigcup \{g(\alpha): \alpha<\rho\}<\theta$ since $\theta$ is regular. Similar argument shows that the regularity of $\theta^+$ is preserved.
    
    \item This is an immediate consequence of previous item (1)  and the factoring property of $\po_{\vec{E}}$ from Remark \ref{RMK:P_E-basicProperties}.
    
    \item Given $p\in \po_{\vec{E}}$ and $\dot{D}$ a $\po_{\vec{E}}$-name for a club in $\theta$, applying Proposition \ref{proposition:almostSacks}, there exist $q\geq p$ and $g: \theta\to [\theta]^{<\theta}$ such that $q\Vdash \min \dot{D}-\alpha \in g(\alpha)$ for all $\alpha<\theta$. Let $h(\alpha)=\sup g(\alpha)<\theta$. Then $q\Vdash cl_h\subset \dot{D}$ where $cl_h=\{\eta<\theta: \forall \gamma<\eta \ h(\gamma)<\eta\}$ is a club in $\theta$ in $V$.
\end{enumerate}
\end{proof}

\section{The failure of $\diamondsuit_{\vec{\mathrm{NS}}, \vec{\lambda}}(\theta)$ in $V^{\po_{\vec{E}}}$}\label{Section: failure}

\begin{lemma}\label{lemma:NameReduction}
Given a condition $p\in \po_{\vec{E}}$ and a $\po_{\vec{E}}$-name for a sequence $\langle \dot{X}_\alpha\subset \alpha: \alpha<\theta\rangle$, there exists a direct extension $q$ of $p$ and a sequence $\langle X^\alpha_\nu: \nu\in X^q_{\alpha}, \alpha<\theta\rangle$ such that for each $\alpha<\theta$, $\alpha\not\in s(p)$,
\begin{enumerate}
    \item for each $\nu\in A^q_\alpha$, $X^\alpha_\nu$ is a $\pi^*_\nu(\po_{\vec{E}})$-name, 
    \item for each $\nu\in A^q_\alpha$, $q^{+\nu}\Vdash \dot{X}^\alpha_\nu = \dot{X}_\alpha$.
\end{enumerate}

\end{lemma}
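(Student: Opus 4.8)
The plan is to prove Lemma \ref{lemma:NameReduction} by reducing the name for each coordinate's slice $\dot X_\alpha$ to something that depends only on information below and at coordinate $\alpha$, so that a one-point extension $q^{+\nu}$ fully decides how the relevant name is to be interpreted. The main tool will be the factoring property from Remark \ref{RMK:P_E-basicProperties}(2), together with the closure/chain-condition of the two factors and the capturing machinery of Lemma \ref{Lem:StrongCapturing}.

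First I would, as in the proof of Proposition \ref{proposition:almostSacks}, reduce to the case $s(p)=\emptyset$: if $\gamma=\max s(p)$, then $\po_{\vec{E}}/p$ factors as $\po_0\times\po_1$ where $\po_0$ has $\kappa_\gamma^+$-c.c. and $\po_1=\po_{\vec{E}\backslash(\gamma+1)}/p_{>\gamma}$ has empty support, so it suffices to work above $p_{>\gamma}$ in $\po_1$ (the finitely many coordinates in $s(p)$ are handled trivially, as the lemma only asks for the conclusion on $\alpha\notin s(p)$). Next, for each $\alpha<\theta$ and each $\nu\in A^p_\alpha$, I would use the factoring $\po_{\vec{E}}/p^{+\nu}\cong \pi^*_\nu(\po_{\vec{E}\restriction\alpha})/(p\restriction\alpha)\times \po_{\vec{E}\backslash\alpha}/(p\downharpoonright\alpha)$. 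The point is that $\dot X_\alpha$ is (forced to be) a subset of $\alpha$, and $\alpha<\kappa_\alpha$, so the second factor $\po_{\vec{E}\backslash\alpha}$ is $\kappa_\alpha^+$-closed in its direct extension ordering (Remark \ref{RMK:P_E-basicProperties}(2)); hence I can thin out $A^p_\alpha$ and direct-extend the tail coordinates so that the tail part adds no information about $\dot X_\alpha$, leaving a name $\dot X^\alpha_\nu$ that lives in the first factor $\pi^*_\nu(\po_{\vec{E}\restriction\alpha})=\pi^*_\nu(\po_{\vec{E}})$ (in the sense of Definition \ref{Def:ProjOperatorOnSets&Funcs}).

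The concrete mechanism I would use is the strong capturing lemma. Applying Lemma \ref{Lem:StrongCapturing} to the sequence of dense open sets $D_\alpha=\{r:\exists Y\subseteq\alpha,\ r\Vdash \dot X_\alpha=\check Y\}$ (which are dense open since $\dot X_\alpha$ names a subset of the fixed set $\alpha$ and $|P(\alpha)|<\theta$), I obtain a single direct extension $q\geq^* p$ that strongly captures $\vec D=\la D_\alpha:\alpha<\theta\ra$. For each $\alpha$ and each $\nu\in A^q_\alpha$, strong capturing yields a dense open $D_{\nu,<\alpha}\subseteq \pi^*_\nu(\po_{\vec{E}^q\restriction\alpha})$ together with fat trees above the tail condition $q^{+\nu}_{\geq\alpha}$ all of whose maximal-length extensions land in $D_\alpha$. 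This is exactly the data that lets me build a $\pi^*_\nu(\po_{\vec{E}})$-name $\dot X^\alpha_\nu$: below each $w\in D_{\nu,<\alpha}$ the value of $\dot X_\alpha$ is decided by $w\fr q^{+\la\nu\ra\fr\vec\mu}_{\geq\alpha}$ for the maximal $\vec\mu\in T^{\nu,w}$, and since the tail extension $\vec\mu$ concerns coordinates $\geq\alpha$ whose critical points exceed $\alpha$, the decided value $Y\subseteq\alpha$ depends only on $w$; amalgamating these decisions over the dense set $D_{\nu,<\alpha}$ defines the desired name in the first factor, and by construction $q^{+\nu}\Vdash \dot X^\alpha_\nu=\dot X_\alpha$.

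The step I expect to be the main obstacle is verifying that the captured value of $\dot X_\alpha$ genuinely factors through the first component $\pi^*_\nu(\po_{\vec{E}})$ and does not secretly depend on the fat-tree extension $\vec\mu$ on the tail coordinates. The reason this should go through is precisely the discreteness and escalating-critical-point structure of $\vec E$: every tail extension moves coordinates $\beta\geq\alpha$ with $\kappa_\beta>\alpha$, so no further generic information about the fixed ordinal-indexed set $\dot X_\alpha\subseteq\alpha$ can be added along the tree — but making this rigorous requires carefully checking, using the Prikry property of the tail factor and its closure, that two distinct maximal branches $\vec\mu,\vec\mu'$ over the same $w$ force the same value of $\dot X_\alpha$, so that the name is well-defined. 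Once well-definedness is established, properties (1) and (2) of the statement follow directly from the definition of $\dot X^\alpha_\nu$ and the density of $D_{\nu,<\alpha}$.
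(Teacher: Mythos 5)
The fatal step is the first concrete one: your sets $D_\alpha=\{r\in\po_{\vec{E}}:\exists Y\subseteq\alpha,\ r\Vdash\dot{X}_\alpha=\check{Y}\}$ are \emph{not} dense in general, so Lemma \ref{Lem:StrongCapturing} cannot be applied to them. Membership in $D_\alpha$ requires $\dot{X}_\alpha$ to be decided as a \emph{ground-model} set, but $\po_{\vec{E}}$ adds many new bounded subsets of $\theta$: the Cohen-type blocks $\po^*_{E_\gamma}$ (i.e.\ $\add(\kappa_\gamma^+,\lambda)$) adjoin generic functions, hence new subsets of $\kappa_\gamma^+$; so for any $\alpha\geq\kappa_0^+$ there is a name $\dot{X}_\alpha$ which is forced to be a subset of $\alpha$ not lying in $V$, and below any condition forcing $\dot{X}_\alpha\notin \check{V}$ the set $D_\alpha$ is empty. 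The counting observation $|P(\alpha)|<\theta$ gives no traction here: if your $D_\alpha$'s were dense for every name sequence, then by genericity every subset of every $\alpha<\theta$ in $V[G]$ would lie in $V$, i.e.\ $\po_{\vec{E}}$ would add no new bounded subsets of $\theta$, which is false. This is precisely why Lemma \ref{lemma:NameReduction} is stated the way it is: $\dot{X}_\alpha$ is to be reduced to a $\pi^*_\nu(\po_{\vec{E}\restriction\alpha})$-\emph{name} --- the lower factor itself adds subsets of $\alpha$ --- not to a check name.

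A second, independent problem is the one you flag at the end and hope "should go through": it does not. One-point extensions at coordinates $\beta\geq\alpha$ are not inert below $\alpha$, since adding $\mu$ at $\beta$ replaces the part of the condition below $\beta$ by its $\pi^*_\mu$-projection; consequently the value decided for a name of a subset of $\alpha$ can genuinely vary along distinct maximal branches of a fat tree, and "discreteness of the critical points" does not repair this. The paper's proof avoids both issues by working exclusively with \emph{direct} extensions: it builds a $\leq^*$-increasing chain $\la q_\beta:\beta<\theta\ra$ with frozen initial segments, and at stage $\alpha+1$ chooses $N\prec H(\chi)$ with $|N|=\kappa_\alpha$, ${}^{<\kappa_\alpha}N\subset N$, and a totally $N$-generic condition (Definition \ref{Def:TotallyGenericConditions}) for the direct-extension ordering of $\po^*_{E_\alpha}\times\po^*_{\vec{E}\backslash\alpha}$, which is $\kappa_\alpha^+$-closed and has the Prikry property. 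The dense sets this condition meets are, for each $\nu$ (all of which lie in $N$ by closure), the sets of pairs $(f,\vec{r})$ for which there \emph{exists} a $\pi^*_\nu(\po_{\vec{E}\restriction\alpha})$-name $\dot{B}$ with $\pi^*_\nu(q_\alpha\restriction\alpha)\fr f_\nu\fr\vec{r}\Vdash\dot{B}=\dot{X}_\alpha$; density of these sets is exactly where the Prikry property, the $\kappa_\alpha^+$-closure, and the fact that there are fewer than $\kappa_\alpha$ nice lower-part names get used. Replacing your check-name requirement by this existence-of-a-lower-name requirement, and the fat trees by this direct-extension argument, is what is needed to obtain a correct proof.
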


\begin{proof}
To simplify the presentation, we may assume that $s(p)=\emptyset$. This is without loss of generality, since we can proceed by induction on the size of $s(p)$. More precisely, suppose $\alpha = \max s(p)$, then $\po_{\vec{E}}/p$ could be broken into $(\po_{\vec{E}^p\uhr \alpha}/p\uhr\alpha) \times (\po_{\vec{E}\setminus \alpha}/p\dhr\alpha)$. Directly extending $p\dhr\alpha$ if necessary, we may assume that $\langle \dot{X}_\beta: \beta<\alpha\rangle$ is a $\po_{\vec{E}^p\uhr \alpha}/p\uhr\alpha$-name. Then the conclusion follows from the induction hypothesis applied to $\po_{\vec{E}^p\uhr\alpha}/p\restriction \alpha $ and the simplified case applied to $\po_{\vec{E}\setminus \alpha}/p\dhr\alpha$.

We recursively define a $\leq^*$-increasing $\langle q_\alpha: \alpha<\theta\rangle$ such that 
\begin{enumerate}
    \item $q_0=p$,
    \item $q_{\beta}\restriction \alpha = q_\alpha\restriction \alpha$ for any $\alpha<\beta< \theta$,
    \item for each $\nu\in A^{q_{\alpha+1}}_{\alpha}$, there exists a $\pi^*_\nu(\po_{\vec{E}\restriction \alpha})$-name $\dot{X}^\alpha_\nu$ such that $q_{\alpha+1}^{+\nu}$ forces that $\dot{X}_\alpha=\dot{X}^\alpha_\nu$.
\end{enumerate}
Granted the existence of such a sequence, we can define $q=\bigcup_{\alpha<\theta} q_\alpha\restriction \alpha$, which is easily seen to be as sought. To construct the sequence, at limit stage $\alpha<\theta$, we just let $q_\alpha=\bigvee_{\beta<\alpha} q_\beta$. At stage $\alpha+1$, let $N\prec H(\chi)$ be an elementary submodel containing relevant objects including $\po_{\vec{E}}$, $q_\alpha$ and $\dot{X}_\alpha$, such that $|N|=\kappa_\alpha$ and $^{<\kappa_\alpha} N\subset N$. By the $\kappa_\alpha^+$-closure of $(\po^*_{E(\alpha)}/f^{q_\alpha}_\alpha \times \po^*_{\vec{E}\backslash \alpha}/\vec{f}^{q_{\alpha}}\downharpoonright \alpha)$, we can find an $N$  totally generic condition $f^*_\alpha \fr \langle r^*_\beta: \beta > \alpha\rangle$ with $\dom(f^*_\alpha)=N\cap \lambda$. Let $A_\alpha^*$ be $\{\nu\in E(f^*_\alpha): \nu\restriction \dom(f^{q_\alpha}_\alpha)\in A^{q_\alpha}_\alpha\}$. Let us show that $q_{\alpha+1}=q_\alpha\restriction \alpha \fr (f^*_\alpha, A^*_\alpha)\fr \langle r^*_\beta: \beta>\alpha\rangle$ is as desired. Let $\nu\in A^*_\alpha$ and $q^*=\pi^*_\nu(q_\alpha\restriction \alpha)$. Notice that since $|\nu|<\kappa_\alpha$, $\nu\subset N$, ${}^{<\kappa_\alpha}N\subset N$ and $q^*\in V_\theta$, we have that $\nu, q^*\in N$. In $N$, consider the set 
$$
D=\{(f,\vec{r})\in (\po^*_{E_\alpha}/f^{q_\alpha}_\alpha) \times (\po^*_{\vec{E}\backslash \alpha}/\vec{f}^{q_{\alpha}}\downharpoonright \alpha) :$$$$ \text{there exists a } \pi^*_\nu(\po_{\vec{E}\restriction \alpha})\text{-name }\dot{B}, \ q^* \fr f_\nu \fr \vec{r}\Vdash \dot{B}=\dot{X}_\alpha\}.$$

Note that $D$ is dense in $\po^*_{E(\alpha)}/f^{q_\alpha}_\alpha \times \po^*_{\vec{E}\backslash \alpha}/\vec{f}^{q_{\alpha}}\downharpoonright \alpha$ and
\begin{itemize}
    \item the number of (nice) $\pi^*_\nu(\po_{\vec{E}\restriction \alpha})$-names for subsets of $\alpha$ is $<\kappa_\alpha$ and
    \item  $(\po^*_{E(\alpha)}/f^{q_\alpha}_\alpha) \times (\po_{\vec{E}\backslash \alpha}/q_{\alpha}\downharpoonright \alpha)$ satisfies the Prikry Property by Corollary \ref{corollary:PrikryLemma} and is $\kappa^+_\alpha$-closed.
\end{itemize}

 As a result, $f^*_\alpha \fr \langle r^*_\beta: \beta > \alpha\rangle$ meets $D$, which in turns implies that $q^*\fr (f^*_\alpha)_\nu\fr \langle r^*_\beta: \beta > \alpha\rangle$ forces that $\dot{X}_\alpha=\dot{X}^\alpha_\nu$ for some  $\pi^*_\nu(\po_{\vec{E}\restriction \alpha})$-name $\dot{X}^\alpha_\nu$.
\end{proof}

\begin{definition}\label{Definition:t-functions}
Let $G\subset \po_{\vec{E}}$ be generic and in $V[G]$, define: 

\begin{enumerate}
\item for $i<\theta$, $\lambda_i=\lambda^i(f^q_{i}(\kappa_i))$ for some (any) $q\in G$ with $i\in s(q)$;
\item for each $\tau<\lambda$, $t_\tau$ is a function on $\theta$ such that for any $\gamma<\theta$, $t_\tau(\gamma)$ is defined and is equal to $\tau_k$ iff there exists some condition $p\in G$ satisfying: 
\begin{itemize}
    \item $\gamma\in s(p)$, 
    \item if we enumerate decreasingly $\{\gamma_0, \cdots, \gamma_{k-1}\}=s(p)-\gamma$ (in particular $\gamma = \gamma_{k-1})$, then $\tau=\tau_0\in \dom(f^p_{\gamma_0})$, $\tau_1=f^p_{\gamma_0}(\tau_0) \in \dom(f^p_{\gamma_1})$, $\tau_2 = f^p_{\gamma_1}(\tau_1) \in \dom(f^p_{\gamma_2})$, $\cdots, \tau_k=f^p_{\gamma_{k-1}}(\tau_{k-1})$.
\end{itemize}
\end{enumerate}

\end{definition}

\begin{proposition}\label{proposition:scale}
Assume that $\lambda\geq \theta^+$.
For any $p\in \po_{\vec{E}}$ and a $\po_{\vec{E}}$-name for a function in $\dot{g}\in \Pi_{i<\theta}\dot{\lambda}_i$, it is true that $p\Vdash \exists \tau<\lambda, \dot{g}<^* \dot{t}_\tau$.
\end{proposition}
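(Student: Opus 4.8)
The plan is to show that the family $\langle \dot t_\tau : \tau < \lambda\rangle$ is $<^*$-cofinal in $\Pi_{i<\theta}\dot\lambda_i$, imitating the cofinality argument of Claim~\ref{claim: cofinal} from the Radin setting but replacing the single ultrapower embedding there by the extender embeddings $j_{E_\beta}$. Since the assertion is that $p$ forces an existential statement, it suffices by genericity to produce, for the given $p$ and $\dot g$, one direct extension $q \geq^* p$ and one ordinal $\tau < \lambda$ with $q \Vdash \dot g <^* \dot t_\tau$. As in the proofs of Lemma~\ref{lemma:NameReduction} and Proposition~\ref{proposition:almostSacks}, I would first factor the forcing so as to assume $s(p)=\emptyset$.

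Next I would record two preliminary facts. The first is a pointwise monotonicity of the $\dot t_\tau$: since the objects $\nu$ occurring in supports are order preserving and the projection operators $\pi^*_\nu$ of Definition~\ref{Def:ProjOperatorOnSets&Funcs} preserve order, for each fixed coordinate $i$ the map $\tau \mapsto \dot t_\tau(i)$ is order preserving, and in particular $\tau_0 < \tau_1$ gives $\dot t_{\tau_0} <^* \dot t_{\tau_1}$ (the analogue of Claim~\ref{claim: increasing}). The second is a coordinatewise reduction of $\dot g$: applying Lemma~\ref{lemma:NameReduction} (or Lemma~\ref{Lem:StrongCapturing} to the dense sets $D_\beta$ deciding $\dot g(\beta)$) I obtain $q \geq^* p$ so that for every $\beta < \theta$ and $\nu \in A^q_\beta$ the value of $\dot g$ at the point added by $\nu$ is computed by a lower-part name in $\pi^*_\nu(\po_{\vec{E}\restriction \beta})$. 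Because that lower part satisfies the $\bar\kappa_\beta^{++}$-c.c.\ (Remark~\ref{RMK:P_E-basicProperties}), the set of possible such values has size at most $\bar\kappa_\beta^+$, hence is bounded below $\lambda_\beta = \lambda^\beta(\rho^{\vec f}_\beta)$.

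The heart of the argument is to convert these coordinatewise bounds into a single index below $\lambda$. Fix $\beta$ and consider $j_{E_\beta} : V \to M_{E_\beta}$, recalling that $\lambda = j_{E_\beta}(\lambda^\beta)(\kappa_\beta)$ and $H_\lambda \subseteq M_{E_\beta}$. Exactly as in Claim~\ref{claim: cofinal}, with $j_{E_\beta}$ in place of the single embedding and $\lambda$ in place of $lh(\bar U)^+$, I push the bounding data at coordinate $\beta$ through $j_{E_\beta}$: it becomes a name for an ordinal strictly below $j_{E_\beta}(\lambda^\beta)(\kappa_\beta)=\lambda$, and using the chain condition of the relevant factor I extract a genuine $\tau_\beta < \lambda$ such that, on an $E_\beta(f^q_\beta)$-measure-one set of objects $\nu$, the value of $\dot g$ at the point added by $\nu$ lies below $\dot t_{\tau_\beta}$. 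Shrinking each $A^q_\beta$ to this measure-one set is a direct extension recording the bound uniformly.

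It remains to amalgamate the $\theta$-many indices $\tau_\beta < \lambda$ into one, and this is where the hypothesis $\lambda \geq \theta^+$ enters; I expect this to be the main obstacle. The indices have the form $j_{E_\beta}(\,\cdot\,)(\kappa_\beta)$, and the coherence $E_\alpha \mo E_\beta$ of the extender sequence, together with $\lambda \geq \theta^+$, should yield a single $\tau < \lambda$ with $\tau \geq \tau_\beta$ for club-many $\beta$. By the pointwise monotonicity from the second paragraph, for each such $\beta$ we then have $\dot g(\beta) < \dot t_{\tau_\beta}(\beta) \leq \dot t_\tau(\beta)$ on the corresponding measure-one set, so by genericity the resulting direct extension forces $\dot g(i) < \dot t_\tau(i)$ for club-many $i$ in the generic support, i.e.\ $\dot g <^* \dot t_\tau$. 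Since $p$ and $\dot g$ were arbitrary, $p \Vdash \exists \tau < \lambda\, (\dot g <^* \dot t_\tau)$. The delicate points are the uniform passage below $\lambda$ at every coordinate and the selection of one $\tau$ valid on a club of coordinates simultaneously; checking that the measure-one shrinkings assemble into a legitimate condition, and that $\lambda \geq \theta^+$ genuinely suffices for the amalgamation, is the crux.
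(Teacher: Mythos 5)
Your proposal is correct and is essentially the paper's own argument: after reducing to $s(p)=\emptyset$, both proofs apply Lemma \ref{lemma:NameReduction}, bound each lower-part name $\dot g^\beta_\nu$ by an ordinal $\gamma^\beta_\nu<\lambda^\beta(\nu(\kappa_\beta))$ using the $\vec{\kappa}_\beta^{++}$-c.c.\ of $\pi^*_\nu(\po_{\vec{E}\restriction \beta})$, integrate these bounds through $j_{E_\beta}$ to obtain ordinals below $\lambda=j_{E_\beta}(\lambda^\beta)(\kappa_\beta)$, choose a single $\tau<\lambda$ above all $\theta$-many of them (this is where $\lambda\geq\theta^+$ enters), and finish with a direct extension arranging $\tau\in\dom(\nu)$ and $\nu(\tau)$ above the local bound for every $\nu\in A^q_\beta$, so that genericity yields $q\Vdash \dot g<^*\dot t_\tau$. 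Your only deviations are cosmetic---the intermediate indices $\tau_\beta$ and order-preservation of the objects, where the paper directly shrinks to $\{\nu:\nu(\tau)>\gamma^\beta_\nu\}$ using that $\nu\mapsto\nu(\tau)$ represents $\tau$---and the amalgamation you flag as the crux is actually the trivial step: no coherence of the extenders is needed, only that a supremum of $\theta$-many ordinals below $\lambda$ stays below $\lambda$.
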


\begin{proof}
Given $p\in \po_{\vec{E}}$, and a name for a function $\dot{g}\in \Pi_{i<\theta}\dot{\lambda}_i$, we find a direct extension $q$ of $p$ and some $\tau<\lambda$ such that $q\Vdash \dot{g}<^* \dot{t}_\tau$. We may without loss of generality assume that $s(p)=\emptyset$. The reason is that if $\alpha=\max s(p)$, $\po_{\vec{E}}/p$ can be naturally decomposed at $\alpha$ to $\po_{\vec{E}^p \restriction \alpha}/p\uhr \alpha \times \po_{\vec{E}\backslash \alpha}/p \dhr \alpha$ with the lower part of the decomposition being small. Then we can work with the upper part of the decomposition.

Applying Lemma \ref{lemma:NameReduction} and directly extending $p$ is necessarily, we may assume that for any $\alpha<\theta$ and $\nu\in A^{p}_\alpha$, $p^{+\nu}\Vdash \dot{g}(\alpha)=\dot{g}^\alpha_\nu$ where $\dot{g}^\alpha_\nu$ is a $\pi^*_{\nu}(\po_{\vec{E}\restriction \alpha})$-name for an ordinal below $\lambda^\alpha(\nu(\kappa_\alpha))$.
For each $\nu\in A^p_\alpha$, since $\pi^*_{\nu}(\po_{\vec{E}\restriction \alpha})$ satisfies $\vec{\kappa}_\alpha^{++}$-c.c and $\lambda^\alpha(\nu(\kappa_\alpha))$ is a regular cardinal $>\vec{\kappa}_\alpha^{++}$, there exists an ordinal $\gamma_{\nu}^\alpha<\lambda^\alpha(\nu(\kappa_\alpha))$ such that $\Vdash_{\pi^*_{\nu}(\po_{\vec{E}\restriction \alpha})}  \dot{g}^\alpha_\nu<\gamma_{\nu}^\alpha$. 
For each $\alpha<\theta$, let $\gamma_\alpha=j_\alpha(\nu\mapsto \gamma_{\nu}^\alpha)(\mc{E_\alpha}{f^p_\alpha})<\lambda$. Let $\tau\in \lambda- (\sup_{\alpha<\theta} \gamma_\alpha +1)$. Now we directly extend $p$ to $q$ such that for each $\alpha<\theta$,
\begin{itemize}
    \item $\dom(f^q_\alpha)$ contains $\tau$, and
    \item for any $\nu\in A^q_\alpha$, it holds that $\tau\in \dom(\nu)$ and $\nu(\tau)>\gamma_{\nu}^\alpha$.
\end{itemize}
It is now easy to see that $q$ is as desired, namely, $q\Vdash \dot{g}<^* \dot{t}_\tau$.
\end{proof}

\begin{proposition}\label{proposition:globalScale}
In $V[G]$, $\diamondsuit_{\vec{\mathrm{NS}},\vec{\lambda}}(\theta)$ fails, where $\vec{\lambda}=\langle\lambda_i: i<\theta\rangle$.
\end{proposition}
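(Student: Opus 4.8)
The plan is to argue by contradiction: assume $\langle g_\gamma : \gamma < \theta\rangle$ is a $\diamondsuit_{\vec{\mathrm{NS}},\vec{\lambda}}(\theta)$-sequence in $V[G]$ and manufacture a single $f \in \Pi_{i<\theta}\lambda_i$ for which $\{\gamma : g_\gamma \not<_{J_\gamma} f\restriction\gamma\}$ is nonstationary, directly contradicting the guessing requirement. The function $f$ will be one of the scale functions $t_\tau$ of Definition \ref{Definition:t-functions}, whose decisive virtue is Proposition \ref{proposition:scale}: the family $\langle t_\tau : \tau < \lambda\rangle$ is $<^*$-cofinal in $\Pi_{i<\theta}\lambda_i$. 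Throughout I would index the relevant products along the generic club $C_G$, on which the $\lambda_i$ and the $t_\tau$ are genuinely defined.

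Two structural features of this scale drive the argument. First, \emph{monotonicity}: since every object $\nu$ occurring in a one-point extension is order preserving, the iterated traces defining $t_\tau$ are monotone in $\tau$, so that a genericity argument exactly as in Claim \ref{claim: increasing} yields $\tau \le \tau' \implies t_\tau(i) \le t_{\tau'}(i)$ at every $i \in C_G$, and hence $t_\tau\restriction\gamma \le t_{\tau'}\restriction\gamma$ pointwise for every $\gamma$. Second, a \emph{local, eventual} form of cofinality: for every $\gamma$ in some club $C$ there is $\sigma < \lambda$ with $g_\gamma <_{J_\gamma} t_\sigma \restriction \gamma$. I would obtain this by padding each $g_\gamma$ to a function $\hat{g}_\gamma \in \Pi_{i<\theta}\lambda_i$ (setting $\hat g_\gamma(i)=0$ for $i \geq \gamma$) and feeding it to Proposition \ref{proposition:scale}; the key point is that the witnessing direct extension built in the proof of Proposition \ref{proposition:scale} forces domination not merely on a club but at \emph{every} generic coordinate past a fixed stem. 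Read along $C_G$, this is coboundedness in the index, which respects the local ideal $J_\gamma$ in \emph{both} the $\mathrm{cf}(\gamma) > \omega$ and the $\mathrm{cf}(\gamma) = \omega$ cases.

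With these in hand, let $\rho(\gamma) < \lambda$ be least with $g_\gamma <_{J_\gamma} t_{\rho(\gamma)}\restriction\gamma$ for $\gamma \in C$. Since $\lambda$ is a regular cardinal greater than $\theta$ for a forcing-suitable sequence, in particular $\mathrm{cf}(\lambda) > \theta$, the map $\rho \colon C \to \lambda$ is bounded; put $\tau^\dagger = \sup_{\gamma}\rho(\gamma) + 1 < \lambda$. Monotonicity gives $t_{\rho(\gamma)}\restriction\gamma \le t_{\tau^\dagger}\restriction\gamma$ pointwise, and passing to a pointwise-larger function preserves $<_{J_\gamma}$-domination, so $g_\gamma <_{J_\gamma} t_{\tau^\dagger}\restriction\gamma$ for every $\gamma \in C$. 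Taking $f = t_{\tau^\dagger}$, the set $\{\gamma : g_\gamma \not<_{J_\gamma} f\restriction\gamma\}$ is disjoint from $C$, hence nonstationary, contradicting the assumption that $\langle g_\gamma\rangle$ was a $\diamondsuit_{\vec{\mathrm{NS}},\vec{\lambda}}(\theta)$-sequence.

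The step I expect to be most delicate is the local, eventual cofinality, i.e.\ reconciling the global statement of Proposition \ref{proposition:scale} with the pointwise ideals $J_\gamma$ — in particular securing coboundedness (rather than mere cofinal frequency) of the domination at the $\omega$-cofinal coordinates $\gamma$, where $J_\gamma = P_{bdd}(\gamma)$. The resolution is to extract from the proof of Proposition \ref{proposition:scale} that the witnessing condition dominates at every generic coordinate beyond its stem, and to translate this to the $C_G$-index, where it becomes coboundedness, exactly what is required at $\omega$-cofinal points and stronger than what $\mathrm{NS}_\gamma$ demands elsewhere. The only remaining input is that $\mathrm{cf}(\lambda) > \theta$, which holds because $\lambda$ is regular above $\theta$ for such extender sequences.
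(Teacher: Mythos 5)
Your argument has a genuine gap, and it sits exactly at the step you flag as delicate; the repair you propose does not close it. The strengthened reading of Proposition \ref{proposition:scale} (domination at \emph{every} coordinate past the stem of the witnessing condition) is a density fact in $V$: for each name $\dot{g}$ and each $p$ there is a \emph{direct} extension $q \geq^* p$ and a $\tau$ such that $q$ forces domination past $\max s(q)$. In $V[G]$ you do not get to choose which condition of $G$ witnesses this density. You only learn that for \emph{some} $q_\gamma \in G$ and some $\rho(\gamma)$, $\hat{g}_\gamma(i) < t_{\rho(\gamma)}(i)$ for all generic coordinates $i > \max s(q_\gamma)$, and nothing makes that bound smaller than $\gamma$. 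Since $\hat{g}_\gamma$ is identically $0$ above $\gamma$, a bound $\geq \gamma$ renders the conclusion vacuous: you obtain no domination of $g_\gamma = \hat{g}_\gamma \restriction \gamma$ modulo $J_\gamma$ \emph{below} $\gamma$, which is the only thing $\diamondsuit_{\vec{\mathrm{NS}},\vec{\lambda}}$ cares about. The same defect destroys the transfer step: your pointwise monotonicity claim ($\tau \leq \tau' \Rightarrow t_\tau(i) \leq t_{\tau'}(i)$ at every $i$) is false as stated, because the trace values at indices outside the domains of the (order-preserving) objects $\nu$ are set arbitrarily by the Cohen parts of direct extensions; what genericity actually gives is, for each \emph{fixed} pair of indices, domination past a bound depending on that pair --- a bound in $\theta$, again with no reason to lie below $\gamma$. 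So neither $g_\gamma <_{J_\gamma} t_{\rho(\gamma)}\restriction\gamma$ nor $t_{\rho(\gamma)}\restriction\gamma \leq_{J_\gamma} t_{\tau^\dagger}\restriction\gamma$ is available, and the supremum argument collapses. (A lesser point: regularity of $\lambda$ above $\theta$ is not part of being forcing suitable --- the definition only asks $\lambda \geq \sup_\alpha \kappa_\alpha$; what is used, by the paper too, is $\mathrm{cf}(\lambda) > \theta$, e.g.\ $\lambda = \theta^+$.)

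The underlying obstruction is that once a coordinate $\gamma$ has entered the generic support via an object $\nu_\gamma$, the restriction $t_\tau\restriction\gamma$ is (up to $=^*$) the \emph{local} scale function of the reflected forcing $\pi^*_{\nu_\gamma}(\po_{\vec{E}\restriction\gamma})$ with index $\nu_\gamma(\tau)$; which local indices are reachable is decided by $\nu_\gamma$, hence by $G$, \emph{before} anyone sees $g_\gamma$, while an adversarial $g_\gamma \in V[G]$ may well depend on $\nu_\gamma$. This is why the paper runs the whole argument at the level of names, with a single direct extension coordinating all coordinates and all prospective objects at once: Lemma \ref{lemma:NameReduction} reduces $\dot{g}_\gamma$ to local names $\dot{g}^\gamma_\nu$, one for each $\nu \in A^p_\gamma$; the local form of Proposition \ref{proposition:scale} together with the $\vec{\kappa}_\gamma^{++}$-c.c.\ of the reflected forcing yields a local bound $\tau_\nu < \lambda^\gamma(\nu(\kappa_\gamma))$ for each $\nu$; these bounds are integrated by the extender into a single $\tau_\gamma < \lambda$; one $\tau$ above $\sup_\gamma \tau_\gamma$ is fixed; and finally a direct extension prunes every $A_\gamma$ to objects $\nu$ with $\tau \in \dom(\nu)$ and $\nu(\tau) > \tau_\nu$. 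It is this pre-generic pruning --- ensuring the generic chooses only objects that are already defeated by the chosen $\tau$ --- that makes the coherence $(\dot{t}_{\nu(\tau)})^{\nu} =^* \dot{t}_\tau\restriction\gamma$ do the work below each $\gamma$, and it is precisely what a downstream argument carried out in $V[G]$ cannot replicate. To fix your proof you would have to establish, for club-many $\gamma$, cofinality and monotonicity of $\{t_\tau\restriction\gamma : \tau < \lambda\}$ modulo $J_\gamma$ \emph{below} $\gamma$, and proving that is essentially the paper's argument.
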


\begin{proof}
Given $p\in \po_{\vec{E}}$ and a $\po_{\vec{E}}$-name $\langle \dot{g}_\gamma\in \Pi_{i<\gamma}\dot{\lambda}_i: \gamma\in \lim \theta\rangle$, we need to find $q\geq p$ and $\tau<\lambda$ such that $q\Vdash ``\text{for a tail }\gamma\in \lim \theta, \dot{g}_\gamma<^*\dot{t}_\tau\restriction \gamma$''. 
As in the proof of Proposition \ref{proposition:scale}, we may assume $s(p)=\emptyset$. By Lemma \ref{lemma:NameReduction}, directly extending $p$ if necessary, we may assume that for any $\gamma\in \lim \theta$ and $\nu\in A^p_\gamma$, $p^{+\nu}\Vdash \dot{g}_\gamma = \dot{g}^\gamma_\nu$ for some $\pi^*_{\nu}(\po_{\vec{E}\restriction \gamma})$-name $\dot{g}^\gamma_\nu$. 

Fix $\nu\in A^p_\gamma$. By Proposition \ref{proposition:scale} applied to $\pi^*_{\nu}(\po_{\vec{E}\restriction \gamma})$ and $\pi^*_{\nu}(p\restriction \gamma)$, as well as by the fact that $\pi^*_{\nu}(\po_{\vec{E}\restriction \gamma})$ satisfies $\vec{\kappa}_\gamma^{++}$-c.c and $\lambda^\gamma(\nu(\kappa_\gamma))>\vec{\kappa}^+_{\gamma}$ is regular, there exists some $\tau_\nu^\gamma<\lambda^\gamma(\nu(\kappa_\gamma))$ such that $\pi^*_{\nu}(p\restriction \gamma)\Vdash_{\pi^*_{\nu}(\po_{\vec{E}\restriction \gamma})} \dot{g}^\gamma_\nu <^* (\dot{t}_{\tau_\nu})^{\nu}$. We need to clarify one notation here: $(\dot{t}_{\tau_\nu})^{\nu}$ refers to the $t$-functions (see Definition \ref{Definition:t-functions}) defined locally in $V^{\pi^*_\nu(\po_{\vec{E}\restriction \gamma})}$.

For each $\gamma\in \lim \theta$, let $\tau_\gamma = j_\gamma (\nu\mapsto \tau_\nu)(\mc{E_\gamma}{f^p_\gamma})<\lambda$. Let $\tau\in \lambda-(\sup_{\gamma\in \lim \theta} \tau_\gamma+1)$. Directly extend $p$ to $q$ such that for any $\gamma\in \lim \theta$, $\nu\in A^q_\gamma$,
    \begin{itemize}
    \item $\tau\in \dom(\nu)\subset \dom(f^q_\gamma)$,
    \item $\nu(\tau)>\tau_\nu$.
    \end{itemize}
Note that by Definition \ref{Definition:t-functions} applied to $\po_{\vec{E}}$ as well as $\pi^*_{\nu}(\po_{\vec{E}_{<\gamma}})$ and the choice of $q$, we have that: for any $\gamma\in \lim\theta$ and any $\nu\in A^q_\gamma$, $q^{+\nu}\Vdash (\dot{t}_{\nu(\tau)})^{\nu} =^* \dot{t}_\tau\restriction \gamma$. Since $\nu(\tau)>\tau_\nu$, we know that $q^{+\nu} \Vdash \dot{g}_\gamma=\dot{g}^\gamma_\nu<^*(\dot{t}_{\tau_\nu})^{\nu}<^* (\dot{t}_{\nu(\tau)})^{\nu}=^* \dot{t}_\tau\restriction \gamma$ for any $\gamma\in \lim \theta$ and $\nu\in A^q_\gamma$. Therefore, $q$ and $\tau$ are as desired.\end{proof}

With Proposition \ref{proposition:globalScale}, we can complete the proof of Theorem \ref{theorem: mainconsistency}.
\begin{proof}(Theorem \ref{theorem: mainconsistency})\\
Assume without loss of generality that $V\models \mathrm{GCH}$. Let $\la \kappa_\alpha : \alpha < \theta\ra$ be an increasing enumeration of a cofinal discrete set of $\theta^{++}$-strong cardinals $\kappa_\alpha < \theta$. Having each $\kappa_\alpha$ being $\theta^{++}$-strong, with a standard preliminary $\theta$-c.c iteration of Cohen sets that does not changes cofinalities,\footnote{Forcing with an Easton support iteration of Cohen forcings, adding functions $f_\nu : \nu \to V_\nu$ for all $\nu < \theta$,  a standard argument shows that for each $\beta < \theta$ the extender embedding $j_{E_\beta} : V \to M_{E_\beta}$ and each $x \in V_{j_{E_\beta}(\kappa_\beta)}^{M_{E_\beta}}$, the embedding extends to an embedding $j^*$ which further satisfies $j^*(f_{\kappa_\beta})(\kappa_\beta) = x$.}  it is straightforward to recursively define a forcing suitable sequence of extenders $\vec{E} = \la E_\alpha : \alpha < \theta\ra$ such that each $E_\alpha$ is a $(\kappa_\alpha,\theta^+)$-extender. 
Let $G \subseteq \po_{\vec{E}}$ be a generic filter and $\vec{\lambda} = \la \lambda_i : i < \theta\ra$ be the sequence from Definition \ref{Definition:t-functions} of the different reflections of $\lambda = \theta^+$ via points associated with $E_i$, $i < \theta$. Then in $V[G]$, 
\begin{itemize}

    \item 
    By Corollary \ref{cor:nofastclub},
    $V[G]$ and $V$ agree on cardinals and cofinalities, and every club $C \subseteq \theta$ in $V[G]$ contains a club $C'$ in $V$. 
    
    \item for every limit ordinal $\delta \leq \theta$, $\lambda_\delta$ is a regular cardinal, $\vec{\kappa}_\delta^{++} < \lambda_\delta < \kappa_\delta$. 
    We have $\lambda_\delta \geq 2^{\vec{\kappa}_\delta}$ is witnessed by the sequence
     $\vec{t}^\delta \subseteq \prod_{i < \delta} \lambda_i$ generically introduced by the 
    $\vec{E}\uhr \delta$ reflected part of the forcing.
    On the other hand, $2^{\vec{\kappa}_\delta} \leq \lambda_\delta$ since the $E_\delta$-refelcted forcing satisfies $\vec{\kappa}_\delta^{++}$-c.c and has size $\lambda_\delta$. 
    Hence $\lambda_\delta = 2^{\vec{\kappa}_\delta}$ also equals to $2^\delta = \beth(\delta)$ for club many $\delta < \theta$.\\
    
    $\Diamond_{\vec{\mathrm{NS}}, \vec{\lambda}}(\theta)$ fails by Proposition \ref{proposition:globalScale}. Therefore 
    $\Diamond_{\vec{\beth}}(\theta)$ fails as well.  
     
\end{itemize}

\end{proof}

In the last part of this section we show that the assumption  $\lambda\geq \theta^+$ is crucial for the argument. For convenience, for each $\alpha<\theta$, $d\in [\lambda]^{\leq \kappa_\alpha}$ and $A\in E_\alpha(d)$, we may assume that for any $\nu, \nu'\in A$, if $\nu(\kappa_\alpha)=\nu'(\kappa_\alpha)$, then $\dom(\nu)=\dom(\nu')$. This is without loss of generality, since for any $A'\in E(d)$, we can always find $A\subset A'$ in $E(d)$ satisfying the requirement above. This is to ensure that if the value at $\kappa_\alpha$ is determined, then there is a unique $\nu$ responsible for this.

\begin{proposition}
Let $\po_{\vec{E}}$ be defined such that $\lambda=\theta$. If $\diamondsuit(\theta)$ holds in the ground model, then it continues to hold in $V^{\po_{\vec{E}}}$. 
\end{proposition}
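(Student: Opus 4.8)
The plan is to transport a ground model $\diamondsuit(\theta)$-sequence into $V^{\po_{\vec{E}}}$, using that when $\lambda = \theta$ the entire forcing has size $\lambda = \theta$ (Remark \ref{RMK:P_E-basicProperties}), so that the fragment of $\po_{\vec E}$ responsible for $X\cap\gamma$ reflects below $\gamma$ for club many $\gamma$. Fix in $V$ a $\diamondsuit(\theta)$-sequence and, using that $\theta^{<\theta}=\theta$ together with a fixed coding $H(\theta)\leftrightarrow\theta$, upgrade it to a sequence $\langle d_\gamma : \gamma<\theta\rangle$ guessing pairs $(\mathbb{Q},\dot Z)$ where $\mathbb{Q}$ is a poset in $H(\gamma)$ and $\dot Z$ a $\mathbb{Q}$-name for a subset of $\gamma$; concretely, for every $A\subseteq\theta$ coding such data the set $\{\gamma : d_\gamma = A\cap\gamma\}$ is stationary in $V$. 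Since every club of $V[G]$ contains a club of $V$ (Corollary \ref{cor:nofastclub}), it suffices to keep the eventual guessing set $V$-stationary.

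Given a name $\dot X$ for a subset of $\theta$ and a condition $p$, I would apply Lemma \ref{lemma:NameReduction} to the derived name $\langle \dot X\cap\alpha : \alpha<\theta\rangle$ to obtain a direct extension $q\geq^* p$ and, for every $\alpha$ and $\nu\in A^q_\alpha$, a $\pi^*_\nu(\po_{\vec{E}\restriction\alpha})$-name $\dot X^\alpha_\nu$ with $q^{+\nu}\Vdash \dot X\cap\alpha = \dot X^\alpha_\nu$. By the factoring and closure of Remark \ref{RMK:P_E-basicProperties} and the Prikry property (Corollary \ref{corollary:PrikryLemma}), the reflected poset $\pi^*_\nu(\po_{\vec{E}\restriction\alpha})$ has size $<\kappa_\alpha$ while the tail adds no bounded subset of $\kappa_\alpha$, so $X\cap\alpha$ is decided by this small lower factor. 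The essential, and only, use of $\lambda=\theta$ is that all of this data now involves ordinals below $\theta$: the posets $\pi^*_\nu(\po_{\vec{E}\restriction\alpha})$ and the names $\dot X^\alpha_\nu$ can be assembled into a single $W\subseteq\theta$ in $V$ so that, at club many closure points $\gamma$ (i.e.\ $\kappa_\beta<\gamma$ for all $\beta<\gamma$, whence each datum attached to coordinate $\beta<\gamma$ lies in $H(\kappa_\beta^+)\subseteq H(\gamma)$), the restriction $W\cap\gamma$ codes exactly the data for coordinates below $\gamma$. When $\lambda\geq\theta^+$ this breaks: conditions carry functions with domains cofinal in $\lambda>\theta$, and the scale $\langle t_\tau : \tau<\lambda\rangle$ of Definition \ref{Definition:t-functions} yields subsets of $\theta$ whose cuts are not captured below any single $\gamma<\theta$ — this is the very obstruction behind Proposition \ref{proposition:globalScale}.

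With $W$ available, define the candidate sequence in $V[G]$ by decoding $d_\gamma$ against $G$: at a closure point $\gamma$ set $s_\gamma = \bigcup_{\beta<\gamma}(\dot Z^\beta_{\nu_\beta})^{G_\beta}$, where $d_\gamma$ supplies the names $\dot Z^\beta_\nu$, and $\nu_\beta\in A^q_\beta$ together with the lower generic $G_\beta$ are read off from $G$ through the factoring (the one-point extensions $\{q^{+\nu} : \nu\in A^q_\beta\}$ being predense by Lemma \ref{Lem:FatTreeBasics}); put $s_\gamma=\emptyset$ otherwise. If $\gamma$ is a closure point with $d_\gamma = W\cap\gamma$, the guessed names are the actual $\dot X^\beta_\nu$, so the decoding returns $\bigcup_{\beta<\gamma}(X\cap\beta)=X\cap\gamma$. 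Working below $q$, the set of such $\gamma$ is $V$-stationary, hence meets every club of $V[G]$ by Corollary \ref{cor:nofastclub}; a density argument over $p$ then shows $\langle s_\gamma\rangle$ is a $\diamondsuit(\theta)$-sequence in $V[G]$.

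The step I expect to be the main obstacle is the reflection in the second paragraph: one must verify that for club many $\gamma$ the reduced names and reflected posets genuinely live in $H(\gamma)$ \emph{and} that the local generics $G_\beta$ are recoverable from $G$ coherently across $\beta<\gamma$, so that the Prikry-type capturing of Lemma \ref{lemma:NameReduction} can stand in for the $\theta$-chain condition one would ordinarily invoke while keeping all auxiliary data strictly below the cut point $\gamma$. This is exactly the point at which $\lambda=\theta$ (equivalently $|\po_{\vec{E}}|=\theta$) is indispensable, and where the argument must diverge from the $\lambda\geq\theta^+$ situation.
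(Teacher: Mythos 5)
Your proposal is correct and follows essentially the same route as the paper's proof: a ground-model $\diamondsuit(\theta)$-sequence guessing (coded below $\theta$, which is exactly what $\lambda=\theta$ buys) the restricted forcing data together with the reduced names, Lemma \ref{lemma:NameReduction} to localize $\dot{X}\cap\alpha$ to $\pi^*_\nu(\po_{\vec{E}\restriction \alpha})$-names, Corollary \ref{cor:nofastclub} to replace forced clubs by ground-model clubs, and a decoding in $V[G]$ of the guessed names along the generically selected objects $\nu_\beta$ and local generics $G_\beta$, finished by a density argument. The only cosmetic difference is that the paper's guessed pairs $(p_\alpha,f_\alpha)$ explicitly include the restricted condition $p\restriction\alpha$, which is used to define which $\nu$ are ``good'' in the decoding, whereas you recover the same information from the index sets of your coded family $W$.
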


\begin{proof}
Assume the ground model $\diamondsuit(\theta)$ gives rise to a sequence $\langle (p_\alpha, f_\alpha): \alpha<\theta\rangle$ such that 
\begin{itemize}
    \item $p_\alpha$ is a condition in $\po_{\vec{E}\restriction \alpha}$ satisfying that for each $\beta<\alpha$, $\dom(f^{p_\alpha}_\beta)\subset \alpha$,
    \item $f_\alpha$ is a function with domain $\bigcup_{\beta<\alpha} A^{p_\alpha}_\beta$ such that for any $\nu\in A^{p_\alpha}_\beta$ where $\beta<\alpha$, $f_\alpha(\nu)$ is a nice $\pi^*_\nu(\po_{\vec{E}\restriction \beta})$-name for a function from $\beta\to 2$.
\end{itemize}

Let $G\subset \po_{\vec{E}}$ be generic over $V$. In $V[G]$, let us define a sequence: at $\alpha$, we need to define a function $g_\alpha$ from $\alpha \to 2$. For $\beta\in \alpha-s(p_\alpha)$ and $\nu\in A^{p_\alpha}_\beta$, we say $\nu$ is \emph{good} if $\pi^*_{\nu}(p_\alpha\restriction \beta)\in G_\beta = \{p\restriction \beta: p\in G, \beta\in s(p)\}$ and $p_\alpha^{+\nu}$ is an initial segment of a condition in $G$. Then we define $g_\alpha$ to be $\bigcup_{\beta, \nu\in A^{p_\alpha}_\beta,\nu \text{ is good}} (f_\alpha(\nu))^{G_\beta}$. We show, using a density argument, that in $V[G]$, $\langle g_\alpha: \alpha<\theta\rangle$ is a $\diamondsuit(\theta)$-sequence.

Suppose we are given $p\in \po_{\vec{E}}$, a $\po_{\vec{E}}$-name $\dot{f}: \theta\to 2$ and a $\po_{\vec{E}}$-name $\dot{D}$ for a club in $\theta$. By Lemma \ref{lemma:NameReduction}, we may assume that $p^{+\nu}$ forces $\dot{f}\restriction \beta =\dot{f}^\beta_\nu$ for every $\nu\in A^p_\beta$, where $\dot{f}^\beta_\nu$ is a nice $\pi_{\nu}^*(\po_{\vec{E}\restriction \beta})$-name. Let $h$ be the function whose domain is $\bigcup_{\beta<\theta} A^p_\beta$ and for $\beta<\theta$ and $\nu\in A^p_\beta$, $h(\nu)=\dot{f}^\beta_\nu$. By Corollary \ref{cor:nofastclub}, we may assume there is a club $D\in V$ such that $p\Vdash D \subset \dot{D}$.

By the property of the $\diamondsuit(\theta)$ in the ground model, we may assume that $E=\{\beta\in D: (p_\alpha, f_\alpha)=(p\restriction \alpha, h\restriction V_{\beta})\}$ is a stationary subset of $\theta$. Fix $\alpha\in E$.
Let $G\subset \po_{\vec{E}}$ be generic with $p\in G$, then it is clear that for any $\beta\in \alpha$, there is a unique $\nu=\nu_\beta\in A^p_\beta$ such that $p^{+\nu}$ is in $G$. As a result, by our choice, $g_\alpha=\bigcup_{\beta<\alpha} (f^\beta_{\nu_\beta})^{G_\beta}=\bigcup_{\beta<\alpha}(\dot{f})^G\restriction \beta=(\dot{f})^G\restriction \alpha$.
\end{proof}

\section{Compactness properties in $V^{\po_{\vec{E}}}$ when the length of $\vec{E}$ is measurable}\label{Section: Compactness}

Suppose $\theta$ is a measurable cardinal carrying a normal measure $U$. We note that usually a weaker large cardinal hypothesis on $\theta$ suffices to run the arguments in this section. We will not optimize in this respect for simplicity.

\begin{definition}
Given a sequence $\vec{r}=\langle r_\alpha: \alpha<\theta\rangle$ where $r_\alpha: \lambda\to \kappa$ is a partial function of size $\leq \kappa$ where $\kappa<\theta<\lambda$. We say $\vec{r}$ forms a \emph{$\Delta$-system with root $f$} if $\{\dom(r_\alpha): \alpha<\theta\}$ forms a $\Delta$-system with root $d^*$ and $f=r_\alpha\restriction d^*$ for any $\alpha<\theta$. We say $\vec{r}$ forms a \emph{$\Delta$-system} if there exists some $f$ such that $\vec{r}$ forms a $\Delta$-system with root $f$.
\end{definition}

\begin{lemma}\label{lemma:singleinstance}
Suppose we are given a sequence $\vec{r}=\langle r_\alpha\in Add(\kappa^+, \lambda): \alpha<\theta\rangle$ where $\kappa<\theta<\lambda$. Then there exists $B\in U$ such that $\{r_\alpha: \alpha\in B\}$ forms a $\Delta$-system.
\end{lemma}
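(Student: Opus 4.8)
We need to show that for a $U$-indexed sequence $\vec{r}=\langle r_\alpha : \alpha<\theta\rangle$ of conditions in $\add(\kappa^+,\lambda)$ (partial functions $r_\alpha:\lambda\to\kappa$ of size $\le\kappa$), there is a set $B\in U$ on which the $r_\alpha$ form a $\Delta$-system. This is a measure-one analogue of the classical $\Delta$-system lemma, and the idea is to use the normality of $U$ to push the usual counting/pigeonhole arguments onto a measure-one set.

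Let me think about the structure. We have $\kappa<\theta<\lambda$, and $\theta$ is measurable with normal measure $U$.

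**The plan.**

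The plan is to mimic the standard $\Delta$-system lemma proof but replace "stationarily many / unboundedly many" with "$U$-many," exploiting normality to handle the regressive functions that arise.

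First I would control the domains. Set $d_\alpha=\dom(r_\alpha)$, a subset of $\lambda$ of size $\le\kappa$. The first goal is to find $B_0\in U$ such that $\{d_\alpha:\alpha\in B_0\}$ forms a $\Delta$-system with some root $d^*$. Then I would refine to get the functions to agree on the root.

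For the domains, the key point is that each $d_\alpha$ has size $\le\kappa<\theta$, and $\theta$ is (measurable, hence) regular and inaccessible, so there is no cardinality obstruction to finding a measure-one subsystem. The natural approach: for each $\alpha$, the set $d_\alpha\cap\theta$ (the part of the domain below $\theta$) has size $\le\kappa$. I would first thin out so that $\sup(d_\alpha\cap\theta)$ is controlled — but more efficiently, use normality directly. Consider, for each $\alpha$, whether $d_\alpha$ has a large intersection below $\alpha$. By normality one expects a diagonal-intersection argument to stabilize the "bounded-below-$\alpha$" part of each domain to a fixed root.

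**Key steps in order.**

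\begin{enumerate}
\item \emph{Separating the domains at $\theta$.} Split each $d_\alpha = (d_\alpha\cap\theta)\ \cup\ (d_\alpha\setminus\theta)$. The part above $\theta$ lives in $[\lambda]^{\le\kappa}$ and is not moved by the ultrapower in a way that interacts with $\alpha$, so it can be treated by a direct counting/pigeonhole after the main thinning; the delicate part is $d_\alpha\cap\theta$.

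\item \emph{Bounding the low part.} For $\alpha\in\theta$, the set $d_\alpha\cap\alpha$ is a subset of $\alpha$ of size $\le\kappa<\theta$. I would use normality: the map $\alpha\mapsto d_\alpha\cap\alpha$ is "regressive" in the generalized sense, and by a diagonal intersection / Fodor-style argument for normal measures, there is $B_1\in U$ and a fixed set $r^*\subseteq\theta$ of size $\le\kappa$ such that for $\alpha\in B_1$, $d_\alpha\cap\alpha = r^*$. Concretely, consider the function $\alpha\mapsto \sup(d_\alpha\cap\alpha)$; since $|d_\alpha\cap\alpha|\le\kappa<\alpha$ for $U$-almost all $\alpha$ (as $\kappa<\theta$ and $U$ concentrates on ordinals above $\kappa$), and $\alpha\mapsto d_\alpha\cap\alpha$ takes values that are eventually constant on a measure-one set by the normality-driven pressing-down on the canonical function in the ultrapower $j_U(\vec d)(\theta)\cap\theta$.

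\item \emph{Extracting the root via the ultrapower.} The cleanest route is to pass to the ultrapower $j_U:V\to M=\mathrm{Ult}(V,U)$. Let $d^* = j_U(\langle d_\alpha:\alpha<\theta\rangle)(\theta)\cap\theta$ and more generally analyze $[\vec d]_U = j_U(\vec d)(\theta)$, a subset of $\lambda$ of size $\le\kappa$ in $M$. Since $\mathrm{crit}(j_U)=\theta$ and $|d^*|\le\kappa<\theta$, every element of $d^*\cap\theta$ is fixed by $j_U$, so $d^*\cap\theta\in V$. I would let $r^* := j_U(\vec d)(\theta)\cap\theta$ and observe that $\{\alpha : d_\alpha\cap\theta\supseteq r^*\text{ in the right sense}\}\in U$. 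Then the standard Łoś-theorem translation gives $B_1\in U$ on which $\{d_\alpha\cap\theta:\alpha\in B_1\}$ is a $\Delta$-system with root $r^*$.

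\item \emph{The high part and the values.} On $B_1$, the domains above $\theta$ still need thinning: each $d_\alpha\setminus\theta\in[\lambda\setminus\theta]^{\le\kappa}$. Here I would apply the ordinary $\Delta$-system lemma inside the ultrapower again, or note that since there are only boundedly-many relevant configurations by inaccessibility, pigeonhole on a measure-one set. Finally, to get the functions (not just domains) to agree on the root $d^*$: there are at most $\kappa^{\kappa}=2^\kappa$ possible restrictions $r_\alpha\restriction d^*$ (under GCH, $=\kappa^+<\theta$), so by $\sigma$-additivity / $\theta$-completeness of $U$ there is $B\subseteq B_1$, $B\in U$, on which $r_\alpha\restriction d^* = f$ is constant.
\end{enumerate}

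**The main obstacle.**

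The hard part will be step 3/4: correctly separating the below-$\theta$ and above-$\theta$ portions of the domains and establishing the root via the ultrapower. Above $\theta$ the points are genuinely moved by $j_U$ and there is no automatic fixing, so I cannot simply read off the root from $j_U(\vec d)(\theta)$ for that part; instead I must combine the normality argument (which handles $d_\alpha\cap\theta$ beautifully, since those ordinals are below the critical point) with a separate completeness/counting argument for $d_\alpha\setminus\theta$. The cleanest formulation is likely to first reduce via $\theta$-completeness of $U$ to the case where all $|d_\alpha|$ equal a fixed cardinal and the order-type of each $d_\alpha$ is fixed, then run the diagonal intersection on the enumerating functions $\alpha\mapsto e_\alpha(i)$ (the $i$-th element of $d_\alpha$) for each $i<\kappa$ coordinate-by-coordinate, using that $U$ is $\theta$-complete and $\kappa<\theta$ to intersect the $\kappa$-many resulting measure-one sets. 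The value-agreement in the final step is then routine by the same completeness count.
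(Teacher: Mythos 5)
There is a genuine gap, and it sits exactly where you predicted: the part of the domains lying above $\theta$. Your step (4) offers two ways to handle $\mathrm{dom}(r_\alpha)\setminus\theta$, and neither works as stated. The pigeonhole/counting suggestion fails because the number of possible high parts can be as large as $\lambda^{\kappa}$, which may vastly exceed $\theta$; the $\theta$-completeness of $U$ only lets you pigeonhole into fewer than $\theta$ classes. The ``ordinary $\Delta$-system lemma inside the ultrapower'' is no better: the classical lemma produces \emph{some} subfamily of size $\theta$ with no control relating its index set to the measure, so it does not yield a set in $U$. Your fallback in the last paragraph (fix the order type, enumerate $\mathrm{dom}(r_\alpha)=\{e_\alpha(i): i<\mathrm{otp}\}$, and stabilize coordinate by coordinate) also has a hole: even if on a measure-one set each map $\alpha\mapsto e_\alpha(i)$ is constant or injective, the family can still fail to be a $\Delta$-system because of collisions \emph{across} coordinates. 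For instance, with $\mathrm{dom}(r_\alpha)=\{\theta+\alpha,\ \theta+\alpha+1\}$ every coordinate is injective on all of $\theta$, yet any index set containing two consecutive ordinals already violates the $\Delta$-system property; ruling out such cross-coordinate collisions requires a further shrinking argument that the proposal does not supply.

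The missing idea, which is the one line that makes the paper's proof go through, is a relabeling: $\bigcup_{\alpha<\theta}\mathrm{dom}(r_\alpha)$ is a union of $\theta$ sets of size $\le\kappa$, hence has cardinality at most $\theta$, so one may fix an injection of it into $\theta$ and assume from the outset that every domain is a subset of $\theta$ of size $\le\kappa$. After this reduction there is no high part at all, and your steps (2)--(3) essentially finish: stabilize $\alpha\mapsto \mathrm{dom}(r_\alpha)\cap\alpha$ to a fixed $m\subseteq\theta$ on some $B\in U$ (via normality, equivalently via $j_U(\vec{d})(\theta)\cap\theta$ as you propose), and then --- a step you also need below $\theta$ but left implicit --- intersect $B$ with the club $C$ of ordinals $\alpha$ such that $\sup\mathrm{dom}(r_\beta)<\alpha$ for all $\beta<\alpha$ (normal measures contain all clubs). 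For $\alpha<\alpha'$ in $B\cap C$ beyond $\sup m$ one then gets $\mathrm{dom}(r_\alpha)\subseteq\alpha'$, hence $\mathrm{dom}(r_\alpha)\cap\mathrm{dom}(r_{\alpha'})=\mathrm{dom}(r_\alpha)\cap(m\cap\alpha')=m$; this club intersection is precisely what kills the cross-coordinate collisions above. Your final step, getting the conditions (not just the domains) to agree on the root by counting the at most $2^\kappa<\theta$ restrictions and using $\theta$-completeness, is correct and is also how the paper implicitly passes from domains to conditions.
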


\begin{proof}
Let $m_\alpha= \dom(r_\alpha)$. It suffices to show that there is $B\in U$ such that $\{m_\alpha: \alpha\in B\}$ forms a $\Delta$-system. Since $\bigcup_{\alpha<\theta} m_\alpha\in [\lambda]^{\leq \theta}$, by injecting this set into $\theta$, we may assume without loss of generality that $m_\alpha\in [\theta]^{\leq \kappa}$. Consider the map $\alpha\mapsto m_\alpha\cap \alpha$. There exist $B\in U$ and $m \subset \theta$ such that for any $\alpha\in B$, $m\cap \alpha = m_\alpha\cap \alpha$. Let $C\subset \theta$ be the club consisting of ordinals $\alpha$ such that for any $\beta<\alpha$, $\sup m_\beta < \alpha$. It can be checked easily that $\{m_\alpha: \alpha\in C\cap B\}$ forms a $\Delta$-system.
\end{proof}

\begin{lemma}\label{lemma:deltasystem}
Given a sequence $\langle f_\alpha: \alpha<\theta\rangle$ such that for each $\alpha$, $f_\alpha\in \Pi_{i<\alpha} Add(\kappa_i^+, \lambda)$, there exists $B\in U$ such that for any $\gamma\in \theta$, $\{f_\alpha(\gamma): \alpha\in B-(\gamma+1)\}$ forms a $\Delta$-system.
\end{lemma}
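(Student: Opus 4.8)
The plan is to reduce to the single-coordinate situation already resolved by Lemma \ref{lemma:singleinstance} and then assemble the coordinates simultaneously using the normality of $U$. First I would fix $\gamma < \theta$ and consider the sequence $\langle f_\alpha(\gamma) : \gamma < \alpha < \theta\rangle$. Since each $f_\alpha(\gamma)$ lies in $\mathrm{Add}(\kappa_\gamma^+,\lambda)$, this is (after reindexing it as a genuine $\theta$-sequence, e.g.\ by assigning the empty function to the boundedly many coordinates $\alpha \leq \gamma$, which is harmless as a bounded set is never in $U$) exactly the input required by Lemma \ref{lemma:singleinstance}. That lemma then yields a set $B_\gamma \in U$, which I may take to be contained in $(\gamma,\theta)$, such that $\{f_\alpha(\gamma) : \alpha \in B_\gamma\}$ forms a $\Delta$-system, with some root $d^*_\gamma$ and common restriction $f^{(\gamma)} = f_\alpha(\gamma)\restriction d^*_\gamma$.

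Next I would diagonalize. Set $B = \triangle_{\gamma<\theta} B_\gamma = \{\alpha < \theta : \alpha \in B_\gamma \text{ for every } \gamma < \alpha\}$; since $U$ is normal, $B \in U$. I claim $B$ witnesses the lemma. Indeed, fix $\gamma < \theta$ and suppose $\alpha \in B - (\gamma+1)$, so $\gamma < \alpha$ and $\alpha \in B$. By the defining property of the diagonal intersection, $\alpha \in B_\gamma$. Hence $B - (\gamma+1) \subseteq B_\gamma$, and therefore $\{f_\alpha(\gamma) : \alpha \in B-(\gamma+1)\}$ is a subfamily of the $\Delta$-system $\{f_\alpha(\gamma) : \alpha \in B_\gamma\}$.

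Finally I would invoke the elementary fact that any subfamily of size at least two of a $\Delta$-system is again a $\Delta$-system with the \emph{same} root: if the domains $\{\dom(f_\alpha(\gamma)) : \alpha \in B_\gamma\}$ intersect pairwise in the fixed kernel $d^*_\gamma$ and the functions agree on $d^*_\gamma$, then both properties hold verbatim for any subcollection. Because $B \in U$ and bounded sets are not in $U$, the set $B-(\gamma+1)$ is co-bounded in $B$, hence still of size $\theta \geq 2$. Thus $\{f_\alpha(\gamma) : \alpha \in B-(\gamma+1)\}$ is a $\Delta$-system, as required, and this holds for every $\gamma < \theta$.

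The whole argument is a routine normality/diagonalization layered on top of Lemma \ref{lemma:singleinstance}, so I do not expect a genuine obstacle. The only points demanding care are the bookkeeping involved in applying Lemma \ref{lemma:singleinstance} to a tail of the sequence (handled by the harmless reindexing above) and the easy but essential observation that passing to a subfamily preserves both the $\Delta$-system structure and its root; once these are in place the verification is immediate.
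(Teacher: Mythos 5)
Your proposal is correct and is essentially identical to the paper's proof: apply Lemma \ref{lemma:singleinstance} coordinate-wise to obtain the sets $B_\gamma$, then take the diagonal intersection $B=\triangle_{\gamma<\theta}B_\gamma$ using normality of $U$. The extra details you supply (the reindexing for the boundedly many coordinates $\alpha\leq\gamma$ and the observation that a subfamily of a $\Delta$-system is a $\Delta$-system with the same root) are exactly the routine steps the paper leaves implicit.
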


\begin{proof}
For each $\gamma<\theta$, apply Lemma \ref{lemma:singleinstance} to find $B_\gamma\in U$ such that $\{f_\alpha(\gamma): \alpha\in B_\gamma\}$ forms a $\Delta$-system. Let $B=\Delta_{\gamma<\theta} B_\gamma$, which satisfies the conclusion as desired.
\end{proof}

Let $G\subset \po_{\vec{E}}$ be generic. In $V[G]$, let $\bar{U}$ be the filter generated by $U$. Namely, $\bar{U}=\{B\subset \theta: \exists A\in U, A\subset B\}$.

\begin{lemma}\label{lemma:completeness}
$\bar{U}$ is $\theta$-complete and normal.
\end{lemma}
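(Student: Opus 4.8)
The plan is to reduce both completeness and normality of $\bar U$ in $V[G]$ to the corresponding properties of $U$ in $V$, using Proposition \ref{proposition:almostSacks} to pull $V[G]$-witnesses back into the ground model. Fix once and for all, in $V$, an injection $e : U \to \Ord$ coding the members of $U$ by ordinals. Suppose $p \in \po_{\vec E}$ forces that $\la \dot B_\alpha : \alpha < \theta\ra$ is a sequence of members of $\bar U$. By the definition of $\bar U$ and the maximum principle, I would choose for each $\alpha$ a name $\dot h_\alpha$ with $p \Vdash \dot h_\alpha \in \check U \wedge \dot h_\alpha \subseteq \dot B_\alpha$, and set $\dot H(\alpha) = e(\dot h_\alpha)$, so that $\dot H$ is a $\po_{\vec E}$-name for a function $\theta \to \Ord$.

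First I would apply Proposition \ref{proposition:almostSacks} to $p$ and $\dot H$ to obtain a direct extension $q \geq^* p$ and a function $g : \theta \to [\Ord]^{<\theta}$ \emph{in $V$} with $q \Vdash \dot H(\alpha) \in g(\alpha)$ for all $\alpha < \theta$. For each $\alpha$ I then set $A^*_\alpha = \bigcap \{ e^{-1}(\xi) : \xi \in g(\alpha) \cap e[U]\}$. Since $g(\alpha)$ has size $<\theta$ and $U$ is $\theta$-complete in $V$, each $A^*_\alpha$ lies in $U$; moreover the whole sequence $\la A^*_\alpha : \alpha < \theta\ra$ belongs to $V$ because $g$ and $e$ do. As $q$ forces the true value $e(\dot h_\alpha)$ to lie in $g(\alpha) \cap e[U]$, we get $q \Vdash A^*_\alpha \subseteq \dot h_\alpha \subseteq \dot B_\alpha$ for every $\alpha$.

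With the ground-model witnesses $\la A^*_\alpha\ra$ in hand, normality is immediate: since $U$ is normal in $V$ it is closed under diagonal intersections, so $\triangle_\alpha A^*_\alpha \in U$, and the elementary computation $\triangle_\alpha A^*_\alpha \subseteq \triangle_\alpha \dot B_\alpha$ (if $\xi \in \triangle_\alpha A^*_\alpha$ then $\xi \in A^*_\alpha \subseteq \dot B_\alpha$ for all $\alpha < \xi$) shows $q \Vdash \triangle_\alpha \dot B_\alpha \in \bar U$. For $\theta$-completeness I would run the same argument with a sequence of length $\delta < \theta$, padding $\dot H$ to all of $\theta$; then $\bigcap_{\alpha < \delta} A^*_\alpha \in U$ by $\theta$-completeness of $U$ in $V$ and $q \Vdash \bigcap_{\alpha<\delta} A^*_\alpha \subseteq \bigcap_{\alpha<\delta} \dot B_\alpha$. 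Since $p$ and the names were arbitrary, a density argument yields that $\bar U$ is $\theta$-complete and normal in $V[G]$.

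The only real content is the pullback step, and that is exactly what Proposition \ref{proposition:almostSacks} supplies: the forcing $\po_{\vec E}$ has the $(<\theta)$-covering (``almost Sacks'') property confining each coordinate of a name for a function on $\theta$ to a ground-model set of size $<\theta$. I expect this to be the main point; once the witnesses $A^*_\alpha$ are recovered in $V$, completeness and normality are inherited directly from $U$. I do not expect to need the $\Delta$-system Lemmas \ref{lemma:singleinstance} and \ref{lemma:deltasystem} for this statement, as those appear tailored to the subsequent construction of a master condition and the lifting of the ultrapower embedding rather than to the filter-theoretic closure properties of $\bar U$.
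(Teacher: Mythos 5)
Your proof is correct and follows essentially the same route as the paper: the paper also applies Proposition \ref{proposition:almostSacks} to confine each member of the $V[G]$-sequence to a ground-model family $\mathcal{A}_\alpha\in [U]^{<\theta}$, intersects these using the $\theta$-completeness of $U$ in $V$, and then invokes the normality of $U$ in $V$ for the diagonal intersection. The only difference is presentational: the paper argues semantically in $V[G]$, while you carry out the same argument with names, conditions, and an explicit coding $e:U\to\Ord$; your remark that the $\Delta$-system lemmas are not needed here is also accurate.
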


\begin{proof}
Let $\langle A_\alpha\in \bar{U}: \alpha<\theta\rangle$ be a given sequence. We may without loss of generality assume each $A_\alpha\in U$ by the definition of $\bar{U}$. By Proposition \ref{proposition:almostSacks}, we know there exists $\{\mathcal{A}_\alpha\in [U]^{<\theta}: \alpha<\theta\}\in V$ such that for each $\alpha<\theta$, $A_\alpha\in \mathcal{A}_\alpha$. By $\theta$-completeness of $U$ in $V$, we can let $B_\alpha=\bigcap \mathcal{A}_\alpha\in U$ and we know that $B_\alpha\subset A_\alpha$. By the normality of $U$ in $V$, $\Delta_{\alpha<\theta} B_\alpha\in U$. It is clear that $\Delta_{\alpha<\theta} B_\alpha \subset \Delta_{\alpha<\theta} A_\alpha$, implying that $\Delta_{\alpha<\theta} A_\alpha\in \bar{U}$. Hence $\bar{U}$ is normal in $V[G]$.
\end{proof}

\begin{lemma}\label{lemma:positivesets}
Suppose we are given
\begin{enumerate}
\item $p\Vdash \dot{S}\in \bar{U}^+$ and
\item for each $\beta\in \lim\theta$ and $\nu\in A^p_\beta$, a dense open set $D_{\nu, <\beta}\subset \pi^*_{\nu}(\po_{\vec{E}\restriction \beta})$.
\end{enumerate}
Then there exists a direct extension $q\geq^* p$, $B\in U$ and $\langle p_\beta\in \po_{\vec{E}\restriction \beta}: \beta\in B\rangle$ such that for each $\beta\in B$, 
\begin{enumerate}
\item $p_\beta\geq_{\po_{\vec{E}\restriction \beta}} q\restriction \beta$,
\item $p_\beta\fr q\downharpoonright \beta \in \po_{\vec{E}}$ forces that $\beta\in \dot{S}$ and
\item for each $\nu\in A^q_\beta$, $\pi^*_\nu(p_\beta)\in D_{\nu,<\beta}$.
\end{enumerate}
\end{lemma}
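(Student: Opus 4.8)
The plan is to build the direct extension $q$ by a fusion along $\theta$, using the extender ultrapower $j_{E_\beta} : V \to M_{E_\beta}$ at each coordinate $\beta$ to produce the lower condition $p_\beta$, and to invoke the normal measure $U$ only at the end to extract $B$. As usual I may assume $s(p)=\emptyset$, decomposing $\po_{\vec{E}}/p$ at $\max s(p)$ otherwise. First I would absorb the demand ``$p_\beta \fr q\dhr\beta \Vdash \beta\in\dot{S}$'' into the dense-set data: for each $\beta\in\lim\theta$ the set $F_\beta=\{r\in\po_{\vec{E}} : r \text{ decides } \beta\in\dot{S}\}$ is dense open, so by Lemma \ref{Lem:StrongCapturing} some direct extension of $p$ strongly captures $\la F_\beta : \beta<\theta\ra$. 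Intersecting the dense open sets produced by this capturing with the given $D_{\nu,<\beta}$ (legitimate, since both live in $\pi^*_\nu(\po_{\vec{E}\uhr\beta})$ and intersections of dense open sets are dense open), I may assume henceforth that whenever $w\in D_{\nu,<\beta}$ there is a $(p^{+\nu}_{\geq\beta},n)$-fat tree whose maximal branches $\vec{\mu}$ make $w\fr p^{+\la\nu\ra\fr\vec{\mu}}_{\geq\beta}$ decide $\beta\in\dot{S}$.

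Next I would run the fusion defining $q$, fixing at stage $\beta$ the top coordinate $f^q_\beta$ together with $A^q_\beta\in E_\beta(f^q_\beta)$, while keeping $q(\beta')\uhr\beta=q(\beta)\uhr\beta$ for $\beta'>\beta$ and taking coordinatewise limits at limit stages exactly as in Lemmas \ref{lemma:NameReduction} and \ref{Lem:StrongCapturing}. The crucial move at stage $\beta$ is to meet all the reflected dense sets $D_{\nu,<\beta}$ with a single lower condition. Working in $M_{E_\beta}$, the object $\mc{E_\beta}{f^q_\beta}$ reflects $j_{E_\beta}(\la D_{\nu,<\beta} : \nu\ra)$ to a dense open set in $\pi^*_{\mc{E_\beta}{f^q_\beta}}(j_{E_\beta}(\po_{\vec{E}\uhr\beta}))=\po_{\vec{E}\uhr\beta}$; extending $q\uhr\beta$ into it, and — via the capturing — into the fat-tree branch forcing $\beta\in\dot{S}$ whenever that is possible, yields a candidate $r_\beta$, and by {\L}o\'{s}'s theorem the set
\[
A^q_\beta=\{\nu\in OB_{E_\beta}(f^q_\beta) : \pi^*_\nu(r_\beta)\in D_{\nu,<\beta}\}
\]
lies in $E_\beta(f^q_\beta)$. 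This is the standard device for meeting measure-one-many dense sets with one condition, which sidesteps the fact that the direct-extension order of $\po_{\vec{E}\uhr\beta}$ is only $\kappa_0$-closed and hence far too weak to build a generic lower condition by closure alone.

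It remains to locate $B$. Let $B_0$ be the set of $\beta\in\lim\theta$ for which the above $r_\beta$ can be chosen with $r_\beta\fr q\dhr\beta\Vdash\beta\in\dot{S}$, and put $p_\beta=r_\beta$ for $\beta\in B_0$. I claim $B_0\in U$. As $U$ is an ultrafilter it suffices to exclude $\theta\setminus B_0\in U$: for $\beta\notin B_0$ the capturing forces every condition below $q$ that decides $\beta\in\dot{S}$ to decide it negatively, whence $q\Vdash\beta\notin\dot{S}$; thus $q\Vdash \dot{S}\cap(\theta\setminus B_0)=\emptyset$, and since $\theta\setminus B_0\in U\subseteq\bar{U}$ this contradicts $q\geq^* p\Vdash\dot{S}\in\bar{U}^+$. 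Taking $B=B_0$ then delivers conclusions (1)--(3); should a uniform bound on the domains $\dom(f^{p_\beta}_\gamma)$ be required to secure the cross-coordinate constraints of $q$, I would shrink $B$ using the $\Delta$-system Lemma \ref{lemma:deltasystem}. The main obstacle, and the point needing the most care, is precisely this interface between two large-cardinal structures: the extender reflection at each $E_\beta$ governs the ``horizontal'' requirement that one lower condition meet all the $\pi^*_\nu$-dense sets, whereas $U$ governs the ``vertical'' extraction of the $\beta$ for which $\beta\in\dot{S}$ is forceable, and one must verify that negative decisions on $\theta\setminus B_0$ genuinely descend to $q$ so that positivity of $\dot{S}$ can be invoked.
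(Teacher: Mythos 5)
Your skeleton matches the paper's proof (a fusion of length $\theta$, extender reflection at each coordinate $\beta$ to produce a single lower condition whose $\pi^*_\nu$-projections meet the $D_{\nu,<\beta}$, and a final contradiction with $U$-positivity), but substituting Lemma \ref{Lem:StrongCapturing} for Lemma \ref{lemma:NameReduction} creates a genuine gap, in two places. The capturing lemma only yields decisions of ``$\beta\in\dot{S}$'' by conditions of the form $w \fr q_{\geq\beta}^{+\la \nu\ra \fr \vec{\mu}}$, i.e.\ after \emph{end-extending the part of the condition above $\beta$} along a maximal branch $\vec{\mu}$ of a fat tree, and distinct branches may decide the statement in opposite ways. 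Conclusion (2) of the lemma, however, requires the condition $p_\beta\fr q\downharpoonright\beta$ --- whose upper part is exactly the direct extension $q\downharpoonright\beta$, with no end extensions --- to force $\beta\in\dot{S}$ outright; a positive decision by one branch does not transfer down to $p_\beta\fr q\downharpoonright\beta$ unless \emph{all} branches of a fully compatible tree decide positively, and nothing in your construction arranges that. The same defect undermines the extraction of $B$: from ``$\beta\notin B_0$'', i.e.\ no condition of the special form $r_\beta\fr q\downharpoonright\beta$ forces $\beta\in\dot{S}$, you infer $q\Vdash\beta\notin\dot{S}$. That inference is unjustified, because an extension of $q$ is free to end-extend and refine the coordinates $\geq\beta$ as well, and such a condition could force $\beta\in\dot{S}$ even though no condition with upper part exactly $q\downharpoonright\beta$ does. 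Without $q\Vdash \dot{S}\cap(\theta\setminus B_0)=\emptyset$ the contradiction with $\dot{S}\in\bar{U}^+$ disappears, so $B_0\in U$ is not established.

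What is missing is precisely the paper's opening move: apply Lemma \ref{lemma:NameReduction} (not capturing), so that above each one-point extension $q^{+\nu}$, $\nu\in A^q_\beta$, the Boolean value of ``$\beta\in\dot{S}$'' is a $\pi^*_\nu(\po_{\vec{E}\restriction\beta})$-name. This localizes the decision entirely to the projected lower forcing, making it independent of the upper part and of any fat-tree branch. One then asks, per $\nu$, whether some $w\in D_{\nu,<\beta}$ extending $\pi^*_\nu(q\restriction\beta)$ forces $\beta\in\dot{S}$ in $\pi^*_\nu(\po_{\vec{E}\restriction\beta})$, declares $\nu$ successful or unsuccessful accordingly, homogenizes over $\nu$ using the ultrafilter $E_\beta(f^q_\beta)$, and integrates the choices $p_{\beta,\nu}$ into a single $p_\beta$ with $\pi^*_\nu(p_\beta)=p_{\beta,\nu}$ (this is your reflection-and-\L o\'{s} step, which is correct in spirit). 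The payoff is the dichotomy your argument needs and cannot reach: if all $\nu\in A^{q_\beta}_\beta$ are successful, predensity of the one-point extensions gives $p_\beta\fr q_\beta\downharpoonright\beta\Vdash\beta\in\dot{S}$ with no end extension appearing; if all are unsuccessful, density of $D_{\nu,<\beta}$ plus the lower-name property gives $q_\beta^{+\nu}\Vdash\beta\notin\dot{S}$ for every $\nu$, hence $q_\beta\Vdash\beta\notin\dot{S}$, which is exactly the negative decision required to conclude $B\in U$ at the end.
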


\begin{proof}
For the sake of simplicity, we assume $s(p)=\emptyset$ and $\Vdash \dot{S}$ consists of limit ordinals.
We construct recursively $\langle q_\beta\in \po_{\vec{E}}: \beta<\theta\rangle$, $\langle p_\beta\in \po_{\vec{E}\restriction \beta}: \beta\in \lim \theta\rangle$ satisfying that
\begin{enumerate}
\item for any $\beta_0<\beta_1$, $q_{\beta_1}\restriction \beta_0=q_{\beta_0}\restriction \beta_0$ and $q_{\beta_1}\geq^* q_{\beta_0}$,
\item for any $\beta$, $p_\beta\geq_{\po_{\vec{E}\restriction \beta}} q_\beta\restriction \beta$ and $p_\beta \fr q_\beta \downharpoonright \beta \in \po_{\vec{E}}$,
\item if $q_\beta\not\Vdash \beta\not\in \dot{S}$, then $p_\beta\fr q_\beta\downharpoonright \beta \Vdash \beta\in \dot{S}$,
\item for any $\nu\in A^{q_\beta}_\beta$, $\pi^*_{\nu}(p_\beta)\in D_{\nu, <\beta}$.
\end{enumerate}

Applying Lemma \ref{lemma:NameReduction} and directly extending $p$ if necessarily, we may assume that for any $\nu\in A^q_\beta$, $p^{+\nu}$ forces that the Boolean value deciding $``\beta\in \dot{S}"$ is a $\pi^*_\nu(\po_{\vec{E}\restriction \beta})$-name. Suppose we are at stage $\beta$ of the construction, having already defined $\langle q_{\alpha}: \alpha<\beta\rangle$ and $\langle p_\alpha: \alpha<\beta\rangle$ satisfying the requirements above. Let us now define $q_\beta$. If $\beta$ is a successor, then $q_\beta$ is $q_{\beta-1}$. If $\beta$ is a limit, then $q^*$ is defined to be the condition such that $q^*\restriction \alpha=q_\alpha\restriction \alpha$ for all $\alpha<\beta$ and for $\gamma\geq \beta$, $q^*(\gamma)$ is the least upper bound of $\langle q_\alpha(\gamma): \alpha<\beta\rangle$. For each $\nu\in A^{q^*}_{\beta}$, if there exists a condition extending $\pi^*_\nu (q^*\restriction \beta)$ in $D_{\nu,<\beta}$ that forces in $\pi^*_\nu(\po_{\vec{E}\restriction \beta})$ that $\beta\in \dot{S}$, then we define $p_{\beta, \nu}$ to be this condition and call $\nu$ \emph{successful}. Otherwise, $p_{\beta,\nu}$ is any extension of $\pi^*_{\nu} (q^*\restriction \beta)$ in $D_{\nu,<\beta}$ and $\nu$ is \emph{unsuccessful}. Next we directly extend $q^*$ to $q_\beta$ such that 
\begin{itemize}
\item $q_\beta\restriction \beta = q^*\restriction \beta$,
\item either for all $\nu \in A^{q_{\beta}}_{\beta}$, $\nu$ is successful or for all $\nu \in A^{q_\beta}_\beta$, $\nu$ is unsuccessful,
\item there is $p_\beta\in \po_{\vec{E}}$ such that $p_\beta\fr q_\beta\downharpoonright \beta \in \po_{\vec{E}}$ and for each $\nu\in A^{q_\beta}_\beta$, $\pi^*_\nu(p_\beta)=p_{\beta,\nu}$.
\end{itemize}
If for all $\nu \in A^{q_\beta}_\beta$, $\nu$ is successful, then we say that $\beta$ is \emph{successful}. Otherwise, $\beta$ is \emph{unsuccessful}. This finishes the definition of $q_\beta$ and $p_\beta$. We need to verify the requirements are met. It is clear from the definition that we only need to check (3). If $q_\beta\not\Vdash \beta\not\in \dot{S}$, then $\beta$ is successful. Since if $\beta$ is unsuccessful, then by the definition of $q_\beta$, we have for all $\nu\in A^{q_\beta}_\beta$, $q_\beta^{+\nu}\Vdash \beta\not\in \dot{S}$. Hence $q_\beta\Vdash \beta\not\in \dot{S}$, which is impossible. As a result, $\beta$ is successful, then for any $\nu\in A^{q_\beta}_\beta$, $(p_\beta \fr q_\beta\downharpoonright \beta)^{+\nu}$ forces that $\beta\in \dot{S}$. As a result, $p_\beta \fr q_\beta \downharpoonright \beta \Vdash \beta\in \dot{S}$.

Finally, let $q=\lim_\beta q_\beta$. More precisely, $q(\beta)=q_{\beta}(\beta)$ for each $\beta<\theta$. Then it is clear that for each limit $\beta$, $q\geq q_\beta$, $q\restriction \beta = q_\beta\restriction \beta$ and hence $p_\beta \geq_{\po_{\vec{E}\restriction \beta}}q\restriction \beta$.

It is only left to check that $B=\{\beta<\theta: \beta \text{ is successful}\}\in U$. Suppose not, then $B^c\in U$. We show that $q\Vdash B^c\cap \dot{S}=\emptyset$, which contradicts with the assumption that $q\Vdash \dot{S}\in \bar{U}^+$. To see this, note that for each $\beta\in F^c\cap \lim\theta$, as $\beta$ is unsuccessful, we have $q_\beta\Vdash \beta\not\in \dot{S}$. Since $q\geq q_\beta$, $q\Vdash \beta\not\in \dot{S}$.
\end{proof}

The next proposition demonstrates the ineffability of $\theta$ relative to sequences of sets individually lying in the ground model (not necessarily the whole sequence) retains in the extension.

\begin{proposition}\label{prop:V-ineffability}
In $V[G]$, the following property holds: for any $S\in \bar{U}^+$, any $\langle X_\alpha\in V\cap P(\alpha): \alpha\in S\rangle$, there exists $X\subset \theta$ and $X\in V$ such that $\{\alpha\in S: X\cap \alpha=X_\alpha\}\in \bar{U}^+$.
\end{proposition}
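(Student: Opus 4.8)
The plan is to reduce the generic sequence $\langle \dot X_\alpha\rangle$ to ground-model data, using name reduction together with the positivity lemma, and then to cohere that data into a single ground-model set via the measurability of $\theta$.

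First I would fix a condition $p$ and names $\dot S,\langle \dot X_\alpha:\alpha<\theta\rangle$ with $p\Vdash \dot S\in\bar U^+$ and $p\Vdash \dot X_\alpha\in V\cap P(\alpha)$ for $\alpha\in\dot S$ (setting $\dot X_\alpha=\emptyset$ off $\dot S$). By Lemma \ref{lemma:NameReduction}, after a direct extension I may assume that for each $\beta$ and each $\nu\in A^p_\beta$ the value $\dot X_\beta$ is given by a $\pi^*_\nu(\po_{\vec E\restriction\beta})$-name $\dot X^\beta_\nu$, via $p^{+\nu}\Vdash \dot X_\beta=\dot X^\beta_\nu$. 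Since $\dot X^\beta_\nu$ is forced to name a ground-model set, the collection $D_{\nu,<\beta}$ of conditions $w\in\pi^*_\nu(\po_{\vec E\restriction\beta})$ that decide $\dot X^\beta_\nu=\check Y$ for some $Y\in V\cap P(\beta)$ is dense open. Feeding these $D_{\nu,<\beta}$ and $\dot S$ into Lemma \ref{lemma:positivesets} produces a direct extension $q\geq^* p$, a set $B\in U$, and witnesses $\langle p_\beta:\beta\in B\rangle$ such that $p_\beta\fr q\dhr\beta\Vdash \beta\in\dot S$ and, for each $\nu\in A^q_\beta$, $\pi^*_\nu(p_\beta)\in D_{\nu,<\beta}$ decides a ground-model value $Y^\beta_\nu\subseteq\beta$ for $\dot X_\beta$.

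The crux is to replace the $\nu$-dependent guesses $Y^\beta_\nu$ by a single guess. Here I would exploit the discreteness of the critical points $\langle\kappa_\alpha\rangle$: the set $C=\{\kappa_\alpha:\alpha<\theta\}$ is disjoint from its own limit points, while normality of $U$ forces $\lim(C)\in U$ whenever $C\in U$; hence $C\notin U$, and since $\kappa_\beta\geq\beta$ always, $\{\beta:\beta<\kappa_\beta\}\in U$. Shrinking $B$, I may assume $\beta<\kappa_\beta$, whence $|P(\beta)|=2^\beta<\kappa_\beta$ by inaccessibility of $\kappa_\beta$. As $E_\beta(f^q_\beta)$ is $\kappa_\beta$-complete and $\nu\mapsto Y^\beta_\nu$ has range of size $<\kappa_\beta$, it is constant on a measure-one set $A^*_\beta\in E_\beta(f^q_\beta)$, with value $Z_\beta\subseteq\beta$. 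The sequence $\langle Z_\beta:\beta\in B\rangle$ lies in $V$, so I can cohere it: letting $j_U:V\to N\cong\mathrm{Ult}(V,U)$ and $X:=j_U(\langle Z_\beta\rangle)(\theta)$, closure of $N$ under $\theta$-sequences gives $P(\theta)^N=P(\theta)^V$, so $X\in V$, and by {\L}o\'{s} the set $B^*:=\{\beta\in B:X\cap\beta=Z_\beta\}$ belongs to $U$.

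It remains to check that $X$ works, namely that $q$ forces $T:=\{\beta\in\dot S:X\cap\beta=\dot X_\beta\}\in\bar U^+$. Since every element of $\bar U$ contains a set in $U$, a set belongs to $\bar U^+$ iff it meets every $A\in U$, so it suffices to show that for every $A\in U$ and every $r\geq q$ there are $r'\geq r$ and $\beta\in A\cap B^*$ above $\max s(r)$ with $r'\Vdash \beta\in\dot S\wedge \dot X_\beta=X\cap\beta$; one would obtain $r'$ from a one-point extension at $\beta$ by some $\nu\in A^*_\beta$ together with the witness $p_\beta$, which forces $\beta\in\dot S$ and pins $\dot X_\beta$ to $Y^\beta_\nu=Z_\beta=X\cap\beta$. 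I expect this transfer to be the main obstacle: the witness $p_\beta$ extends $q\restriction\beta$ but need not be compatible with an arbitrary $r\restriction\beta$, so the argument must use the way $q$ is manufactured in Lemma \ref{lemma:positivesets} (as a limit of totally generic conditions that absorbs the relevant dense sets) to guarantee that the decided values $Y^\beta_\nu$ are genuinely realized by the generic on a $\bar U$-positive set, rather than merely witnessed below incomparable conditions. The stabilization step and this realization step carry essentially all the content; the coherence via $j_U$ and the final positivity bookkeeping are routine.
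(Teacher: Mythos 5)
Your first half tracks the paper's proof almost verbatim: name reduction via Lemma \ref{lemma:NameReduction}, the dense sets $D_{\nu,<\beta}$ of conditions deciding a ground-model value, Lemma \ref{lemma:positivesets} to get $q$, $B$ and the witnesses $\langle p_\beta:\beta\in B\rangle$, stabilization of $\nu\mapsto Y^\beta_\nu$ using the $\kappa_\beta$-completeness of $E_\beta(f^q_\beta)$ together with $2^\beta<\kappa_\beta$, and coherence of the values $Z_\beta$ into a single $X\in V$ via the normal measure. However, the step you flag as ``the main obstacle'' and leave open is exactly the substantive half of the paper's argument, and your guess at the mechanism that closes it is wrong. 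It is not ``the way $q$ is manufactured in Lemma \ref{lemma:positivesets}'': total $N$-genericity of the conditions built there gives no control whatsoever over compatibility of the witnesses $p_\beta$ with extensions of $q$. Your plan --- for arbitrary $r\geq q$ and $A\in U$, find $\beta\in A\cap B^*$ and extend $r$ using $p_\beta$ --- fails as stated, since $r\restriction\beta$ may have grown Cohen parts or added points in a way incompatible with \emph{every} $p_\beta$.

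The paper closes the gap with two ideas you are missing. First, a $\Delta$-system uniformization of the witnesses: using the normal measure again (Lemma \ref{lemma:deltasystem}, via Lemma \ref{lemma:singleinstance}), shrink $B$ to $B'$ so that all $p_\beta$ have the same support $s$ and the same part $t$ below $\max s$, and so that for each coordinate $\gamma$ the functions $\{f^{p_\beta}_\gamma:\beta\in B'\}$ form a $\Delta$-system with root $f_\gamma$; then absorb this uniform data into a further extension $q'$ with $s(q')=s$, $q'\restriction\max s=t$ and $f^{q'}_\gamma=f_\gamma$. Now for any \emph{direct} extension $q''$ of $q'$, each $\dom(f^{q''}_\gamma)$ has size $\leq\kappa_\gamma<\theta$ and so meets only boundedly many of the pairwise disjoint sets $\dom(f^{p_\beta}_\gamma)\setminus\dom(f_\gamma)$; beyond the closure points of the resulting bounding function, $q''$ is compatible with $p_\beta\fr q\dhr\beta$. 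Second, a reduction to direct extensions: a name for a member of $U$ is replaced by a ground-model $E\in U$ using Proposition \ref{proposition:almostSacks} and the $\theta$-completeness of $U$, and then the compatibility just described shows no direct extension of $q'$ can force $\{\beta\in\dot S: p_\beta\fr q\dhr\beta\in\dot G\}$ to be disjoint from such an $E$; finally the Prikry property (Corollary \ref{corollary:PrikryLemma}) yields a direct extension deciding whether this set is $\bar U$-positive, which by the above must decide positively. Since $p_\beta\fr q\dhr\beta\in\dot G$ implies $\beta\in\dot S$ and $\dot X_\beta=X_\beta=X\cap\beta$, this finishes the proof. Without the $\Delta$-system step and the Prikry-property reduction, your final verification has no proof, so the proposal as it stands has a genuine gap.
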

\begin{proof}
Given $p\in \po_{\vec{E}}$ and a name $\langle \dot{X}_\alpha: \alpha<\theta\rangle$ as in the hypothesis, we will find an extension of $p$ forcing that ``there is some $X\subset \theta$ in $V$ such that $\{\alpha: X\cap \alpha=\dot{X}_\alpha\}$ is in $\bar{U}^+$". For simplicity, we assume that $s(p)=\emptyset$. By Lemma \ref{lemma:NameReduction} and Lemma \ref{lemma:positivesets}, we may assume that there are $q\geq^* p$, $B\in U$ and $\langle p_\beta\in \po_{\vec{E}\restriction \beta}: \beta\in B\rangle$, such that 
\begin{enumerate}
\item $p_\beta\geq_{\po_{\vec{E}\restriction \beta}}q\restriction \beta$,
\item $p_\beta \fr q\downharpoonright \beta$ forces that $\beta\in \dot{S}$, 
\item for each $\beta\in B$ and $\nu\in A^q_\beta$, $p^{+\nu}$ forces that $\dot{X}_\beta=X_\nu^\beta$ for some $X_\nu^\beta\in V\cap P(\beta)$.
\end{enumerate}

For each $\beta<\theta$, there exists $X_\beta\subset \beta$ such that the collection of $\nu\in A^q_\beta$ with $X_\beta = X_\nu^\beta$ is in $E_\beta(f^q_\beta)$,
since $E_\beta(f^q_\beta)$ is $\kappa_\beta$-complete and $2^\beta < \kappa_\beta$. Extending $q$ if necessary, we may assume $q$ already satisfies the property.

Shrink $B$ to $B'\in U$ and find $X\subset \theta$, $s\in [\theta]^{<\omega}$ and $t$ such that 
\begin{itemize}
\item for any $\beta\in B'$, $X\cap \beta = X_\beta$,
\item for any $\beta\in B'$, $s(p_\beta)=s$ and $p_\beta\restriction \max s =t$,
\item for any $\gamma<\theta$, $\{f^{p_\beta}_\gamma: \beta\in B'-(\gamma+1)\}$ forms a $\Delta$-system with root $f_\gamma$ (apply Lemma \ref{lemma:deltasystem}).
\end{itemize}

We can further extend $q$ to $q'\in \po_{\vec{E}}$ such that 
\begin{enumerate}
\item $s(q')=s$,
\item $q'\restriction \max s =t$,
\item for any $\gamma\in \theta-\max s$, $f^{q'}_\gamma = f_\gamma$.
\end{enumerate}

\begin{claim}\label{claim: furtherpositive}
No direct extension of $q'$ can force that $\{\beta\in \dot{S}: p_\beta\fr q\downharpoonright\beta\in \dot{G}\}$ is disjoint from some $E\in U$.
\end{claim}
\begin{proof}[Proof of the Claim]
Suppose for the sake of contradiction that $q^*$ is a direct extension of $q'$ forcing that there is some $E^*\in U$ disjoint from $\{\beta: p_\beta\fr q\downharpoonright\beta\in \dot{G}\}$. Apply Proposition \ref{proposition:almostSacks}, we can find $q''\geq^* q^*$ and $\mathcal{E}\in [U]^{<\theta}$ such that $q''$ forces that $E^*\in \mathcal{E}$. Since $U$ is $\theta$-complete, we know that $E=\cap \mathcal{E}\in U$. In other words, we have found a direct extension $q''$ of $q'$ such that $q''\Vdash E$ is disjoint from $\{\beta: p_\beta\fr q\downharpoonright\beta\in \dot{G}\}$.

For each $\gamma\in \theta-\max (s)$, there exists $g(\gamma)$ such that for all $\alpha>g(\gamma)$ and $\alpha\in B'$, it is the case that $f^{q''}_\gamma$ and $f^{p_{\alpha}}_\gamma$ are compatible as functions. Let $C\subset \theta$ be the set of closure points of $g$. Let $\beta\in E\cap B'\cap C$. Then $q''$ and $p_\beta\fr q \downharpoonright \beta$ are compatible since: 
\begin{itemize}
\item $q''\downharpoonright \beta\geq^* q\downharpoonright \beta$,
\item $q''\restriction \max(s)\geq^* q\restriction \max(s)=t=p_\beta\restriction \max (s)$,
\item for $\gamma\in (\max (s), \beta)$, $f^{p\beta}_\gamma$ is compatible with $f^{q''}_\gamma$ since $\beta\in C\cap B'$.
\end{itemize}
Any common extension of $q''$ and $p_\beta\fr q\downharpoonright\beta$ forces that $\beta\in \{\alpha \in \dot{S} : p_\alpha\fr q\downharpoonright\alpha \in \dot{G}\}\cap E\neq\emptyset$, which is a contradiction.
\end{proof}

By Claim \ref{claim: furtherpositive} and Corollary \ref{corollary:PrikryLemma}, we can take a direct extension $q^*$ of $q$ forcing that $\{\beta:p_\beta\fr q\downharpoonright\beta \in \dot{G}\}=\{\beta: X\cap \alpha=\dot{X}_\alpha\}\in \bar{U}^+$.\end{proof}

\begin{corollary}
$\po_{\vec{E}}$ does not add a fresh subset of $\theta$.
\end{corollary}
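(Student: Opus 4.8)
The plan is to derive the corollary directly from Proposition \ref{prop:V-ineffability}, exploiting the normality and $\theta$-completeness of $\bar{U}$ from Lemma \ref{lemma:completeness}. Suppose toward a contradiction that in $V[G]$ there is a \emph{fresh} subset $X \subseteq \theta$, meaning that $X \notin V$ while $X \cap \alpha \in V$ for every $\alpha < \theta$. The key observation is that freshness supplies exactly the kind of coherent sequence of ground-model initial segments to which Proposition \ref{prop:V-ineffability} applies, so most of the work has already been done there.

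Concretely, I would set $X_\alpha = X \cap \alpha$ for each $\alpha < \theta$; by freshness each $X_\alpha \in V \cap P(\alpha)$. Since $\theta \in U \subseteq \bar{U} \subseteq \bar{U}^+$, the set $S = \theta$ lies in $\bar{U}^+$, so applying Proposition \ref{prop:V-ineffability} to $S$ and $\langle X_\alpha : \alpha < \theta\rangle$ yields some $Y \subseteq \theta$ with $Y \in V$ such that
\[
A := \{\alpha < \theta : Y \cap \alpha = X_\alpha\} = \{\alpha < \theta : Y \cap \alpha = X \cap \alpha\} \in \bar{U}^+.
\]

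Next I would upgrade positivity to unboundedness: because $U$ is a normal measure on $\theta$ it is uniform, so every bounded subset of $\theta$ is $U$-null and hence belongs to the dual ideal of $\bar{U}$; consequently $A \in \bar{U}^+$ forces $A$ to be unbounded in $\theta$. A short coherence argument then closes the proof. Given any $\beta < \theta$, choose $\alpha \in A$ with $\alpha > \beta$, so that $X \cap \beta = (X \cap \alpha) \cap \beta = (Y \cap \alpha) \cap \beta = Y \cap \beta$. As $\beta$ was arbitrary, $X = \bigcup_{\beta < \theta}(X \cap \beta) = \bigcup_{\beta < \theta}(Y \cap \beta) = Y \in V$, contradicting the freshness of $X$.

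I do not anticipate a genuine obstacle here, since the combinatorial content has been front-loaded into Proposition \ref{prop:V-ineffability}. The only point deserving explicit care is the passage from $A \in \bar{U}^+$ to $A$ unbounded, which rests on $\bar{U}$ being a uniform filter; this is immediate from the uniformity of $U$ together with Lemma \ref{lemma:completeness}, but it should be stated rather than tacitly assumed.
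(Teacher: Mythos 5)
Your proof is correct and follows exactly the route the paper intends: the corollary is stated immediately after Proposition \ref{prop:V-ineffability} with no further argument, the implicit derivation being precisely your application of that proposition to the sequence of initial segments of a putative fresh set, followed by the unboundedness-plus-coherence step. Your explicit note that $\bar{U}^+$ sets are unbounded (since bounded sets lie in the dual ideal of $\bar{U}$) is a worthwhile detail the paper leaves tacit.
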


\begin{remark}
$\po_{\vec{E}}$ does add a fresh subset of $\theta^+$. In fact, it adds $\lambda$ many $\theta^+$-Cohen sets.
\end{remark}

\begin{corollary}\label{cor: noAmenable}
In $V[G]$, for any $S\in \bar{U}^+$ and any C-sequence $\langle C_\alpha: \alpha\in S\rangle$, there exists a club $D\subset \theta$ such that $\{\alpha\in S: D\cap \alpha\subset C_\alpha\}\in \bar{U}^+$.
\end{corollary}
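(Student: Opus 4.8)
The plan is to deduce the corollary from the $V$-ineffability property of Proposition \ref{prop:V-ineffability}, after replacing each generic club $C_\alpha$ by a ground model club sitting inside it on a $\bar U$-large set of coordinates.

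First I would reduce the goal. It suffices to produce a single $X \in V$, unbounded in $\theta$, together with $T \in \bar U^+$, such that for every $\alpha \in T$ the set $X \cap \alpha$ is a closed cofinal subset of $\alpha$ contained in $C_\alpha$. Indeed, once such $X$ and $T$ are found, $X$ is automatically a club: positive sets are unbounded and $\bar U$ is normal (Lemma \ref{lemma:completeness}), so any limit point $\beta < \theta$ of $X$ lies below some $\alpha \in T$, whence $\beta$ is a limit point of the closed set $X \cap \alpha \subseteq C_\alpha$ and thus $\beta \in C_\alpha \subseteq X$. Then $D := X$ is a club with $D \cap \alpha \subseteq C_\alpha$ for all $\alpha \in T$, as required. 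I would also first shrink $S$: since $U$ is a normal measure on $\theta$, the inaccessible $\alpha<\theta$ that are limits of the critical-point sequence $\la \kappa_i : i < \theta\ra$ form a set in $U \subseteq \bar U$, so intersecting $S$ with it I may assume every $\alpha \in S$ is an inaccessible (in particular regular) limit of the $\kappa_i$.

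The key step, and the main obstacle, is to extract for each such $\alpha$ a ground model club $C'_\alpha \subseteq C_\alpha$; this is a localized form of Corollary \ref{cor:nofastclub}(3). The point is that no subset of $\alpha$ is affected by the tail coordinates $\beta$ with $\kappa_\beta \geq \alpha$: their direct-extension order is $\kappa_\beta^+$-closed (Remark \ref{RMK:P_E-basicProperties}) and hence $\alpha^+$-closed, so by the Prikry property (Corollary \ref{corollary:PrikryLemma}) they contribute no new subsets of $\alpha$, and the relevant part of $\po_{\vec E}$ reflects, via the $\pi^*_\nu$ operators, to a forcing-suitable overlapping-extenders forcing whose critical points are cofinal in $\alpha$ — that is, a copy of the same forcing with target $\alpha$ in place of $\theta$. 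Re-running the argument of Proposition \ref{proposition:almostSacks} and Corollary \ref{cor:nofastclub}(3) for this sub-forcing — legitimate because $\alpha$ is regular, so each slalom value $g(\gamma) \in [\alpha]^{<\alpha}$ has $\sup g(\gamma) < \alpha$ — shows that $C_\alpha$ contains a club $C'_\alpha$ lying in $V$. I expect the principal technical work to be exactly this localization: making precise that subsets of $\alpha$ arise from a target-$\alpha$ copy of the forcing and that the earlier lemmas transfer to it verbatim.

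Finally, having fixed in $V[G]$ a sequence $\la C'_\alpha : \alpha \in S\ra$ with each $C'_\alpha \in V \cap P(\alpha)$ a club in $\alpha$ with $C'_\alpha \subseteq C_\alpha$, I would apply Proposition \ref{prop:V-ineffability} to it, obtaining $X \in V$ with $T := \{\alpha \in S : X \cap \alpha = C'_\alpha\} \in \bar U^+$. For $\alpha \in T$ the set $X \cap \alpha = C'_\alpha$ is closed, cofinal in $\alpha$, and contained in $C_\alpha$, so the hypotheses of the first reduction hold and $D := X$ is the desired club.
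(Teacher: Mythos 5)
Your proposal is correct and is essentially the paper's own proof, which is stated as an immediate combination of exactly the three ingredients you assemble: the factoring/reflection of names via the $\pi^*_\nu$ operators (Lemma \ref{lemma:NameReduction}) to see each $C_\alpha$ as added by a target-$\alpha$ copy of the forcing, the localized form of Corollary \ref{cor:nofastclub}(3) (via Proposition \ref{proposition:almostSacks}) to extract ground-model clubs $C'_\alpha \subseteq C_\alpha$, and Proposition \ref{prop:V-ineffability} to glue these into a single $X \in V$ that is club on a $\bar{U}$-positive set. One immaterial slip: in your closure argument the conclusion should read $\beta \in X \cap \alpha \subseteq X$ (since $X\cap\alpha$ is closed in $\alpha$), not $\beta \in C_\alpha \subseteq X$.
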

\begin{proof}
This follows immediately from Corollary \ref{cor:nofastclub} (3), Lemma \ref{lemma:NameReduction} and Proposition \ref{prop:V-ineffability}.
\end{proof}

\begin{remark}
The conclusion of Corollary \ref{cor: noAmenable} implies that ``there does not exist an amenable C-sequence''. The concept of an amenable C-sequence appears in \cite{MR3914943}, strengthening the principle $\otimes_{\overrightarrow{C}}$ from \cite[p. 134]{MR1318912}.
That ``there is no amenable C-sequence on $\kappa$'' is a non-trivial compactness principle. See the discussion in \cite[Section 5]{JingOmerRadin} for more details. Corollary \ref{cor: noAmenable} can be used to argue that with suitable $\vec{E}$, in $V^{\po_{\vec{E}}}$, there is no amenable C-sequence on $\theta$ while $\diamondsuit(\theta)$ fails. Such a model was first constructed in \cite{JingOmerRadin}, using a different method based on an analysis of suitable Radin extensions.
\end{remark}

Finally, we show that in $V[G]$, even though $2^\theta = \lambda$ and $\lambda$ can be large, the degree of saturation of $\bar{U}$ remains small. Assume $V\models \mathrm{GCH}$.

\begin{proposition}\label{proposition:saturation}
$\bar{U}$ is $\theta^{++}$-saturated.
\end{proposition}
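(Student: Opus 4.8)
The plan is to show that $\bar{U}$ is $\theta^{++}$-saturated in $V[G]$ by a standard ``reflection to a single coordinate'' argument combined with the $\Delta$-system and positivity machinery already developed in this section. Suppose toward a contradiction that $\langle \dot{S}_i : i < \theta^{++}\rangle$ is a $\po_{\vec{E}}$-name for an antichain in $\bar{U}^+/\bar{U}$, so that $p$ forces each $\dot{S}_i \in \bar{U}^+$ and $\dot{S}_i \cap \dot{S}_{i'} \in \bar{U}$ (equivalently $\notin \bar{U}^+$) for $i \neq i'$. Since $\po_{\vec{E}}$ has size $\lambda = \theta^+$ and satisfies $\theta^{++}$-c.c.\ (Remark \ref{RMK:P_E-basicProperties}), and since by Proposition \ref{prop:V-ineffability} and the analysis of names a positive set $\dot{S}_i$ can be ``read off'' from a ground-model set together with a condition deciding it, the first step is to associate to each $i < \theta^{++}$ a ground-model object coding $\dot{S}_i$. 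Concretely, I would use Lemma \ref{lemma:NameReduction} and Lemma \ref{lemma:positivesets} to extract, for each $i$, a direct extension $q_i \geq^* p$, a set $B_i \in U$, and a system $\langle p^i_\beta : \beta \in B_i\rangle$ witnessing positivity of $\dot{S}_i$ in the sense of Lemma \ref{lemma:positivesets}.

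The heart of the argument is a counting/pigeonhole step. Each witnessing datum $(q_i, B_i, \langle p^i_\beta\rangle)$, after shrinking $B_i$ and applying the $\Delta$-system lemma (Lemma \ref{lemma:deltasystem}) as in the proof of Proposition \ref{prop:V-ineffability}, is coded by a ground-model object of size $\theta$: a support $s_i \in [\theta]^{<\omega}$, a stem $t_i$, a root sequence $\langle f^i_\gamma : \gamma < \theta\rangle$ of the $\Delta$-system, and the ground-model set $X_i \subseteq \theta$ with $X_i \cap \beta = X^i_\beta$ for $\beta \in B_i'$. Under GCH we have $2^\theta = \theta^+ < \theta^{++}$, so there are only $\theta^+$ many such ground-model codes. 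By pigeonhole, $\theta^{++}$ of the indices $i$ share the same code; in particular they share the same root sequence $\langle f_\gamma\rangle$, the same stem $t$, the same support $s$, and the same ground-model set $X$. The point of fixing the common $\Delta$-system root and stem is precisely that it makes the corresponding conditions $p^i_\beta \fr q_i \downharpoonright \beta$ pairwise compatible along a club, exactly as exploited in Claim \ref{claim: furtherpositive}: two conditions agreeing on stem and $\Delta$-root are compatible above a tail determined by closure points.

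Given two distinct indices $i \neq i'$ with identical codes, I would then derive a contradiction with the assumption that $\dot{S}_i \cap \dot{S}_{i'} \notin \bar{U}^+$. The strategy mirrors Claim \ref{claim: furtherpositive}: take a purported direct extension forcing $\dot{S}_i \cap \dot{S}_{i'}$ disjoint from some $E \in U$, absorb $E$ into a single ground-model set of measure one via Proposition \ref{proposition:almostSacks} and $\theta$-completeness of $U$, and then find $\beta \in E \cap B_i' \cap B_{i'}' \cap C$ (with $C$ the club of closure points) at which the common root structure guarantees that $p^i_\beta \fr q \downharpoonright \beta$, $p^{i'}_\beta \fr q \downharpoonright \beta$, and the extension are mutually compatible. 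Since $X$ is the \emph{same} ground-model set for both, at such $\beta$ we get $X \cap \beta = X^i_\beta = X^{i'}_\beta$, forcing $\beta$ into both $\dot{S}_i$ and $\dot{S}_{i'}$ simultaneously, and $\beta \in E$, contradicting disjointness. This shows $\dot{S}_i \cap \dot{S}_{i'} \in \bar{U}^+$, contradicting that the $\dot{S}_i$ form an antichain.

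The main obstacle I anticipate is the bookkeeping needed to guarantee that the code of each $\dot{S}_i$ genuinely lives in $V$ and has cardinality at most $\theta$, so that the GCH counting $2^\theta = \theta^+$ applies cleanly. The subtle point is that $\dot{S}_i$ is only a name, and its positivity witness involves names $\dot{X}^\beta_\nu$ for the local $\pi^*_\nu$-forcings; one must verify, using Proposition \ref{prop:V-ineffability}, that after passing to the $E_\beta(f^q_\beta)$-measure-one set on which $X^\beta_\nu$ is constant (using $2^\beta < \kappa_\beta$), the data reduces to a genuinely ground-model $X_i \subseteq \theta$ plus the finite/club-sized combinatorial skeleton. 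Once that reduction is in place, the pigeonhole and the compatibility argument are routine adaptations of the proof of Proposition \ref{prop:V-ineffability}.
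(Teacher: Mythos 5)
Your overall route is the paper's own: extract positivity witnesses $(q_i, B_i, \langle p^i_\beta : \beta\in B_i\rangle)$ from Lemma \ref{lemma:positivesets}, thin out to $\theta^{++}$ many coherent witnesses using GCH and Lemma \ref{lemma:deltasystem}, and then run the Claim \ref{claim: furtherpositive}-style argument to force some $\beta$ into $\dot S_i\cap \dot S_j\cap E$. However, two steps as written have genuine gaps. The first is the ground-model coding step. You assert, citing Proposition \ref{prop:V-ineffability}, that each positive-set name $\dot S_i$ ``can be read off from a ground-model set'' $X_i$, you place $X_i$ in your pigeonhole code, and your closing paragraph makes verifying this reduction the main outstanding task. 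That verification would fail: in the situation this proposition is designed for, $2^\theta=\lambda>\theta^+$ in $V[G]$ while $2^\theta=\theta^+$ in $V$, so $\po_{\vec{E}}$ adds new subsets of $\theta$, and after Lemma \ref{lemma:NameReduction} the restriction $\dot S_i\cap\beta$ is a genuine $\pi^*_\nu(\po_{\vec{E}\restriction\beta})$-name, not an element of $V$. Proposition \ref{prop:V-ineffability} does not give such a reduction; there, the membership of each $X_\alpha$ in $V$ is a \emph{hypothesis} on the given sequence, not a conclusion about arbitrary names. Fortunately this ingredient is unnecessary: Lemma \ref{lemma:positivesets} already supplies conditions $p^i_\beta\fr q_i\downharpoonright\beta$ forcing $\beta\in\dot S_i$, and at a common $\beta$ any common extension of $p^i_\beta\fr q_i\downharpoonright\beta$, $p^j_\beta\fr q_j\downharpoonright\beta$, and the purported condition killing $\dot S_i\cap\dot S_j$ on $E\in U$ forces $\beta\in\dot S_i\cap\dot S_j\cap E$; no knowledge of what $\dot S_i\cap\beta$ \emph{is} enters the argument. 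This is exactly how the paper's proof concludes, and you should simply delete the $X_i$'s.

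The second gap is the pigeonhole itself. You assume $\lambda=\theta^+$ (``$\po_{\vec{E}}$ has size $\lambda=\theta^+$'') and count ``only $\theta^+$ many codes''. But $\lambda=\theta^+$ is not part of the setting of this section, and it is precisely the degenerate case: then $2^\theta=\theta^+$ in $V[G]$, so $\theta^{++}$-saturation holds trivially because there are only $\theta^+$ subsets of $\theta$ altogether; the proposition is stated because $\lambda$ ``can be large''. When $\lambda\geq\theta^{++}$, the roots $f^i_\gamma$ are partial functions with domains inside $\lambda$, so there are up to $\lambda^\theta\geq\theta^{++}$ many possible root sequences and exact-equality pigeonhole breaks down. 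Moreover, even granting $\lambda=\theta^+$, matching the \emph{internal} $\Delta$-system roots of the two families does not make $p^i_\beta$ and $p^j_\beta$ compatible at a common $\beta$: the petals of family $i$ and of family $j$ can overlap off the common root and disagree there. The paper's formulation repairs both defects at once: it demands only pairwise \emph{compatibility} (of the $q_i$'s, and of $p^i_\beta$ with $p^j_\beta$ for $\beta\in B_i\cap B_j$), which is obtained by a $\Delta$-system argument at $\theta^{++}$ applied to the $\theta$-sized unions of the Cohen-part domains (subsets of $\lambda$; this uses only $(\theta^+)^\theta=\theta^+<\theta^{++}$), followed by a pigeonhole on the restrictions of the Cohen parts to the resulting root, which are functions into $\theta$ and hence number at most $2^\theta=\theta^+$ in $V$. (A minor further slip: an antichain for saturation means the pairwise intersections lie in the dual ideal of $\bar U$, i.e.\ their complements lie in $\bar U$, not that the intersections themselves lie in $\bar U$.)
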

\begin{proof}
Suppose we are given $\langle \dot{S}_i\in \bar{U}^+: i<\theta^{++}\rangle$ and a condition $p\in \po_{\vec{E}}$. We need to find some $q\geq p$ and $i\neq j<\theta^{++}$ such that $q\Vdash \dot{S}_i\cap \dot{S}_j\in \bar{U}^+$. For simplicity, assume that $s(p)=\emptyset$.

For each $i<\theta^{++}$, by Lemma \ref{lemma:positivesets}, we find a direct extension $q_i\geq^* p$, $B_i\in U$ and $\langle p^i_\beta\in \po_{\vec{E}\restriction \beta}: \beta\in B_i\rangle$ such that for each $\beta\in B_i$, 
\begin{enumerate}
\item $p^i_\beta\geq_{\po_{\vec{E}\restriction \beta}} q_i\restriction \beta$,
\item $p^i_\beta\fr q_i\downharpoonright \beta$ forces that $\beta\in \dot{S}_i$.
\end{enumerate} 

By shrinking each $B_i$ if necessary, we may assume
\begin{itemize}
\item there exists $s_i\in [\theta]^{<\omega}$ and $t_i$ such that for all $\beta\in B_i$, $s(p^i_\beta)=s_i$ and $p^i_\beta\restriction \max s_i = t_i$, 
\item for any $\gamma\in \theta-\max(s_i)$, $\{f^{p_\eta^i}_\gamma: \eta\in B_i-(\gamma+1)\}$ forms a $\Delta$-system with root $f^i_\gamma$ (apply Lemma \ref{lemma:deltasystem}).
\end{itemize}
We may extend $q_i$ if necessary so that 
\begin{enumerate}
\item $s(q_i)=s_i$,
\item $q_i\restriction \max s_i = t_i$, and
\item for each $\gamma\in \theta-\max(s_i)$, $f^{q_i}_\gamma=f^i_\gamma$.
\end{enumerate}

Since $2^\theta=\theta^+$ in $V$, we can find $A\in [\theta^{++}]^{\theta^{++}}$, $s\in [\theta]^{<\omega}$ and $t$ such that 
\begin{enumerate}
\item for any $i\in A$, $s_i=s$ and $t_i = t$,
\item for any $i,j\in A$, $q_i$ is compatible with $q_j$,
\item for any $i, j\in A$ and any $\beta\in B_i\cap B_j$, $p^i_\beta$ is compatible with $p^j_\beta$ in $\po_{\vec{E}\restriction \beta}$.
\end{enumerate}
Let $i<j\in A$. The following claim finishes the proof of the lemma. Let $q^*$ be the least upper bound of $p_i$ and $p_j$.
\begin{claim}
There exists a direct extension $q$ of $q^*$ forcing that $\dot{S}_i\cap \dot{S}_j\in \bar{U}^+$.
\end{claim}

\begin{proof}[Proof of the Claim]
Suppose not, then by Proposition \ref{proposition:almostSacks}, we can find a direct extension $q'$ of $q^*$ and $\mathcal{B}\in [U]^{<\theta}$ such that $q'\Vdash \exists B'\in \mathcal{B}$, $\dot{S}_i\cap \dot{S}_j\cap B=\emptyset$. Let $B=\bigcap \mathcal{B}\in U$, then we know that $q'\Vdash \dot{S}_i\cap \dot{S}_j\cap B=\emptyset$. For each $\gamma\in \theta-\max s$, there exists some $\eta_\gamma<\theta$, such that for all $\eta>\eta_\gamma$ in $B_i\cap B_j$, $f^{q'}_\gamma$, $f^{p^i_\eta}_\gamma$ and $f^{p^j_\eta}_\gamma$ are mutually compatible. To see this, note that $\dom(f^{q'}_\gamma)\supset \dom(f^{q_i}_\gamma)=\dom(f^i_\gamma)$, hence, for sufficiently large $\eta$, $\dom(f^{p^i_\eta}_\gamma)\cap \dom(f^{q'}_\gamma) \subset \dom(f^i_\gamma)$. The same reasoning works for $j$. That $f^{p^i_\eta}_\gamma$ is compatible with $f^{p^j_\eta}_\gamma$ is guaranteed by the fact that $p^i_\eta$ is compatible with $p^j_\eta$. Let $C$ be the club consisting of the closure points of the function $\gamma\mapsto \eta_\gamma$. Take $\eta\in C\cap B\cap B_i\cap B_j$. As a result, there exists a direct extension $r$ of $q'$ extending both $p^i_\eta \fr q_i\downharpoonright \eta $ and $ p^j_\eta \fr q_j\downharpoonright\eta$. Therefore, $r\Vdash \eta\in \dot{S}_i\cap \dot{S}_j\cap B$, contradicting with the assumption on $q'$ and the choice of $B$.
\end{proof}

\end{proof}

\section{Open questions}\label{Section: Questions}

\begin{question}\label{question: approxdiamondRegTOSing}
Does $\diamondsuit_{\vec{\lambda}}(Reg^\theta)$ imply $\diamondsuit_{\vec{\lambda}}(Sing^\theta_{>\omega})$ outright (see Theorem \ref{theorem: mainGCH} for relevant  notations) or under additional assumptions such as GCH or $\theta$ is weakly compact?\\
\end{question}

\begin{question}\label{question:measurableDiamondSing}
Does $\diamondsuit(Reg^\theta)$ imply $\diamondsuit(Sing^\theta)$? Does $\theta$ being measurable imply $\diamondsuit(Sing^\theta)$?\\
\end{question}

\begin{question}\label{question: approxdiamondTOdiamondWithCardArithmetic}
Is it consistent that $\neg\diamondsuit_{\vec{\mathrm{NS}},\vec{\beth}}(\theta) + \Diamond(\theta)$ in a model where $2^{\alpha}$ is regular for stationarily many $\alpha < \theta$ (See Theorem \ref{theorem: SeparatingDiamonds})?\\
\end{question}

\begin{question}\label{question: approxdiamondEverewhereToDiamond}
Does $\forall \vec{\lambda}\ \diamondsuit_{\vec{\lambda}}(\theta)$ imply $\diamondsuit(\theta)$?\footnote{Note that $\diamondsuit_{\vec{\mathrm{NS}},\langle \alpha^{++}:\alpha<\theta\rangle}(\theta)$ fails in the model described in Theorem \ref{theorem: SeparatingDiamonds}.}\\
\end{question}

\begin{question}
Is it consistent that a strongly inaccessible $\theta$ carries a $\theta^+$-saturated ideal and $\neg\diamondsuit(\theta)$? How about if $\theta$ carries a $\theta$-saturated ideal?
\end{question}

This is related to Proposition \ref{proposition:saturation}. Ketonen \cite{MR332481} showed that if $\theta$ carries a $\gamma$-saturated ideal for some $\gamma<\theta$, then $\diamondsuit(\theta)$ holds.

\bibliographystyle{alpha}
\bibliography{bib}

\end{document}